\newtheorem{theorem}{Theorem}[section]
\newtheorem{lemma}[theorem]{Lemma}
\newtheorem{proposition}[theorem]{Proposition}
\theoremstyle{definition}
\newtheorem{definition}[theorem]{Definition}
\theoremstyle{remark}
\newtheorem{remark}[theorem]{Remark}
\numberwithin{equation}{section}
\theoremstyle{notation}
\newtheorem{notation}[theorem]{Notation}
\newtheorem{claim}[theorem]{Claim}
\newcommand{\abs}[1]{\left\lvert#1\right\rvert}
\newcommand{\norm}[1]{\abs{\abs{#1}}}
\newcommand{\defeq}{\vcentcolon=}
\DeclarePairedDelimiterX{\inp}[2]{\langle}{\rangle}{#1, #2}
\DeclareMathOperator{\st}{{\bf st}}
\newcommand{\starint}{{\prescript{\ast}{}\int}}
\pgfplotsset{%
    compat=1.8,
    compat/show suggested version=false,
}
\newcommand\pgfmathsinandcos[3]{%
  \pgfmathsetmacro#1{sin(#3)}%
  \pgfmathsetmacro#2{cos(#3)}%
}
\newcommand\LatitudePlane[3][current plane]{%
  \pgfmathsinandcos\sinEl\cosEl{#2} 
  \pgfmathsinandcos\sint\cost{#3} 
  \pgfmathsetmacro\yshift{\cosEl*\sint}
  \tikzset{#1/.estyle={cm={\cost,0,0,\cost*\sinEl,(0,\yshift)}}} %
}
\newcommand\DrawLatitudeCircle[2][1]{
  \LatitudePlane{\angEl}{#2}
  \tikzset{current plane/.prefix style={scale=#1}}
  \pgfmathsetmacro\sinVis{sin(#2)/cos(#2)*sin(\angEl)/cos(\angEl)}
  \pgfmathsetmacro\angVis{asin(min(1,max(\sinVis,-1)))}
  \draw[current plane] (\angVis:1) arc (\angVis:-\angVis-180:1);
  \draw[current plane,dashed] (180-\angVis:1) arc (180-\angVis:\angVis:1);
}
\begin{document}

\title[Limiting spherical integrals of bounded continuous functions]{Limiting spherical integrals of bounded continuous functions}
\author{Irfan Alam}
\address{Irfan Alam: Department of Mathematics, Louisiana State University, Baton Rouge, LA 70802, USA}
\email{irfanalamisi@gmail.com}
\urladdr{\url{http://www.math.lsu.edu/~ialam1}}

\subjclass[2010]{Primary 28E05, Secondary 28C20, 03H05, 26E35, 46S20}

\date{\today}

\keywords{Nonstandard analysis, Gaussian Radon transforms, Gaussian measures, spherical integrals}

\begin{abstract}
We use nonstandard analysis to study the limiting behavior of spherical integrals in terms of a Gaussian integral. Peterson and Sengupta proved that if a Gaussian measure $\mu$ has full support on a finite-dimensional Euclidean space, then the expected value of a bounded measurable function on that domain can be expressed as a limit of integrals over spheres $S^{n-1}(\sqrt{n})$ intersected with certain affine subspaces of $\mathbb{R}^n$. This allows one to realize the Gaussian Radon transform of such functions as a limit of spherical integrals. We study such limits in terms of Loeb integrals over a single hyperfinite dimensional sphere. This nonstandard geometric approach generalizes the known limiting result for bounded continuous functions to the case when the Gaussian measure is not necessarily fully supported. We also present an asymptotic linear algebra result needed in the above proof.
\end{abstract}

\maketitle

\begin{section}{Introduction}
The study of the connection between high-dimensional surface area measures on spheres and Gaussian measures, in essence, dates back to the works on the kinetic theory of gas by Boltzmann \cite{Boltzmann} and Maxwell \cite{Maxwell}. Alam \cite{Alam} provided an account of this history, and studied this phenomenon from a nonstandard analytic perspective (see also Cutland--Ng \cite{Cutland-Ng}). A key idea in this work was to express the limiting behavior of spherical integrals through certain Loeb integrals over spheres of hyperfinite dimensions. 

The idea of using a nonstandard measure space as a limiting object of a sequence of measure spaces is applicable to many situations in which we are studying asymptotics of  marginals along a given direction while the ambient spaces are changing. Sengupta \cite{Sengupta} and Peterson--Sengupta \cite{Sengupta-Peterson} studied the Gaussian Radon transform of finite dimensional functions as a limit of spherical integrals over certain spheres of increasing dimension, which is an appropriate setting to work with nonstandard analysis in. This is the main theme of this paper. We refer the reader to \cite{Hertle} and \cite{Hertle2} for earlier standard approaches in this context.

In \cite{Sengupta}, Sengupta fixed a hyperplane $H$ in $\ell^2(\mathbb{R})$ and analyzed the limit of integrals over $S^{n-1}(\sqrt{n})$ intersected with an appropriate ``truncation'' of $H$ to the $n^{\text{th}}$ dimension. More precisely, let $H$ be the set of all square summable real sequences orthogonal to a unit vector $u \in \ell^2(\mathbb{R})$. The integral of a function $f\colon \ell^2(\mathbb{R}) \rightarrow \mathbb{R}$ with respect to the infinite dimensional Gaussian measure with mean $\vec{0} = (0, 0, \ldots)$ and covariance operator equaling the projection $P_H$ onto $H$ is the Gaussian Radon transform of $f$ evaluated at the hyperplane $H$. In general, one could work with a codimension-$1$ affine subspace $A \defeq pu + H$, and integrate $f: \ell^2(\mathbb{R}) \rightarrow \mathbb{R}$ with respect to the Gaussian measure with mean $pu$ and covariance $P_H$ in order to evaluate the Gaussian Radon transform at $A$.

In the case when $f \colon \mathbb{R}^k \to \mathbb{R}$ (identifying it as a function on $\ell^2(\mathbb{R})$ by composing it from the right with the projection to the first $k$ coordinates) is bounded measurable, Sengupta \cite{Sengupta} showed that the corresponding Gaussian Radon transform evaluated at many codimension-$1$ affine subspaces (more precisely, at those affine subspaces for which the marginal onto $\mathbb{R}^k$ of the Gaussian measure described above has full support) can be thought of as limits of spherical integrals of $f$ over the intersection of the spheres $S^{n-1}(\sqrt{n})$ with an appropriate finite dimensional approximation (in $\mathbb{R}^n \subseteq \ell^2(\mathbb{R})$) to $A$. This generalizes the earlier known results on limiting spherical integrals as we are not integrating over the full sphere $S^{n-1}(\sqrt{n})$, but rather on slices of this sphere. In \cite{Sengupta-Peterson}, Peterson and Sengupta generalized the above result further to the case of affine subspaces of any finite codimension.

To more rigorously state the key results in this context, we first need to set up some notation and definitions that will be used throughout the rest of the paper.\\

\subsection{Notation and definitions}
Let $\mathbb{R}^{\mathbb{N}}$ be the vector space of sequences of real numbers, with the standard basis $e_1 = (1, 0, 0, \ldots), e_2 = (0,1,0, \ldots),$ etc. As usual, $\ell^2(\mathbb{R})$ will denote the subspace consisting of all square summable real sequences. For $x = (x_1, x_2, \ldots) \in \mathbb{R}^{\mathbb{N}}$ and $n \in \mathbb{N}$, we define the $n^{\text{th}}$ truncation/projection by
    $$x_{(n)} \defeq (x_1, \ldots, x_n).$$
   
If $x \defeq (x_1, \ldots, x_m) \in \mathbb{R}^m$ for some $m \in \mathbb{N}$, then we will use the same symbol $x$ to denote $(x_1, \ldots, x_m, 0, \ldots, 0) \in \mathbb{R}^n$ for any $n \in \mathbb{N}_{>m}$, as well as to denote $(x_1, \ldots, x_m, 0, 0, \ldots) \in \ell^2(\mathbb{R})$, with the ambient space being clear from the context.     

For $k \in \mathbb{N}$, we use $\pi_{(k)}$ to denote the projection from $\mathbb{R}^{\mathbb{N}}$ (or from some fixed $\mathbb{R}^n$ for $n \in \mathbb{N}_{\geq k}$ if the dimension $n$ is clear from context) onto the first $k$ coordinates: 
\begin{align*}
    \pi_{(k)}(x_1,x_2, \ldots) &= (x_1, \ldots, x_k).
\end{align*}

Let $u^{(1)}, \ldots, u^{(\gamma)}$ be mutually orthonormal vectors in $\ell^2(\mathbb{R})$. For real numbers $p_1, \ldots, p_{\gamma}$ (with $\vec{p} \defeq (p_1, p_2, \ldots, p_{\gamma}) \in \mathbb{R}^{\gamma}$), and $n \in \mathbb{N}$, define (see also Figure \ref{fig:S_A_n_1}):
   \begin{align*}
A(\vec{p}) = A &\defeq \{x \in \ell^2(\mathbb{R}): \langle x, u^{(i)} \rangle = p_i \text{ for all } i \in [\gamma]\},\\
H_n &\defeq \{x \in \mathbb{R}^n: \langle x, (u^{(i)})_{(n)} \rangle = 0 \text{ for all } i \in [\gamma]\}, \\
A_n (\vec{p}) = A_n &\defeq \{x \in \mathbb{R}^n: \langle x, (u^{(i)})_{(n)} \rangle = p_i \text{ for all } i \in [\gamma]\},\\
S_{A_n(\vec{p})} = S_{A_n} &\defeq S^{n-1}(\sqrt{n}) \cap A_n (\vec{p}), \text{ and}\\
S_{H_n} &\defeq S^{n-1}(\sqrt{n}) \cap H_n.
\end{align*}

\begin{centering}
\begin{figure}
    \centering
    \tikzset{
    partial ellipse/.style args={#1:#2:#3}{
        insert path={+ (#1:#3) arc (#1:#2:#3)}
 }}
 {
\centering

\begin{tikzpicture} 
[scale=.6,   rotate= 40]
\def\R{3} 
\def\angEl{40} 
\filldraw[ball color=white] (0,0) circle (\R);
 \foreach \t in {-80,-40,...,80} { \DrawLatitudeCircle[\R]{\t} }

\coordinate [label=left:\textcolor{black}{${}$}](G) at (-6,.5);

\coordinate [label=left:\textcolor{black}{${}$}] (C) at (5,.5);

\coordinate (F) at (4,2.5);

\coordinate (H) at (-5 ,2.5);


 \coordinate [label=left:\textcolor{black}{$A_n(\vec{p})$}] (L) at (3.5, 2.85);
 \coordinate [label=left:\textcolor{black}{$S^{n-1}(\sqrt{n})$}] (X) at
 (4.3, -4.2);
\coordinate [label=left:\textcolor{black}{$S_{A_n(\vec{p})}$}] (L) at (-2.5, 1.45);

 \draw (G)-- (H);

 \draw (C)--(F);


\shadedraw [color=black, opacity = .3]  (-7,-.25)--(6,-.25)--(4.5,3)--(-5.5,3)--(-7,-.25);

 \draw[->, shorten <=1pt,shorten >=1pt](0,0)--(0, 1.5);

\draw[black,fill=black] (0,0) circle (.25ex);

 \draw [dashed] (0,1.5) ellipse (2.5  and 1);

 \draw[thick] (0,1.5) [partial ellipse=150:370: 2.5 and 1];
\end{tikzpicture}
\par}
\label{F:slice}
    \caption{Intersecting $S^{n-1}(\sqrt{n})$ by the affine plane $A_n$}
    \label{fig:S_A_n_1}
\end{figure}
\end{centering}

We also denote $S_{H_n}$ by $S_{n, u^{(1)}, \ldots, u^{(\gamma)}}$ when it is important to emphasize which vectors in $\ell^2(\mathbb{R})$ we are working with. When the sphere is clear from context, we will use $\bar{\sigma}$ to denote its uniform surface area measure. 

The Borel sigma-algebra of a topological space $\Omega$ is denoted by $\mathcal{B}(\Omega)$. Let $\mathcal{S}_0$ be the set of all spheres that are centered at the origin in some real Euclidean space (in any dimension $n \in \mathbb{N}$ and of any radius $r \in \mathbb{R}_{>0}$). For any $S \in \mathcal{S}_0$, we have an orthogonal transformation preserving map called the surface area $\sigma_S \colon \mathcal{B}(S) \rightarrow \mathbb{R}$, which satisfies the following (see \cite[Chapter 3]{spherical_measures} for more background):
\begin{itemize}
\item For any $d \in \mathbb{N}$ and any $a \in \mathbb{R}_{>0}$, we have
        $\sigma_{S^d(a)}(S^d(a)) = c_d \cdot a^{d}$, \text{ where } $c_d = \sigma_{S^d(1)}(S^d(1)) = (d+1) \cdot \frac{\pi^{\frac{d+1}{2}}}{\Gamma\left({\frac{d+1}{2}} + 1\right)} = 2\frac{\pi^{\frac{d+1}{2}}}{\Gamma\left({\frac{d+1}{2}}\right)}$.
\item For any $S \in \mathcal{S}$ and any $A \in \mathcal{B}(\mathcal{S})$, we have $\bar{\sigma}_S(A) = \frac{\sigma_S(A)}{\sigma_S(S)}$.
\end{itemize}

We follow the superstructure approach to nonstandard extensions, as in Albeverio et al. \cite{Albeverio}. In particular, we fix a sufficiently saturated nonstandard extension of a superstructure containing all standard mathematical objects under study. The nonstandard extension of a set $A$ is denoted by ${^*}A$. For $x, y \in {^*}X$ (where $X$ is a normed space), we will write $x \approx y$ to denote that $\norm{x - y}$ is an infinitesimal. The set of finite nonstandard real numbers will be denoted by ${^*}\mathbb{R}_{\text{fin}}$ and the standard part map $\st\colon {^*}\mathbb{R}_{\text{fin}} \rightarrow \mathbb{R}$ takes a finite nonstandard real to its closest real number. We write $N > \mathbb{N}$ (and call such an $N$ \textit{hyperfinite}) if $N \in {^*}\mathbb{N} \backslash \mathbb{N}$. Viewing other spheres as translations of spheres in $S_0$, the concept of the surface area measure canonically extends to all finite-dimensional spheres. By transfer, we have the notion of ${^*}$-surface area in the nonstandard universe. Taking standard parts of the uniform ${^*}$-surface area $\bar{\sigma}_S$ leads to the construction of the uniform Loeb surface measure $L\bar{\sigma}_S$ on any hyperfinite-dimensional sphere $S$. When the sphere is clear from context, we drop the subscript and use $\bar{\sigma}$ and $L\bar{\sigma}$ to denote these measures.

Fix $k \in \mathbb{N}$. For a set $B \in \mathcal{B}(\mathbb{R}^k)$ and any $n \in \mathbb{N}_{\geq k}$, if $S$ is a (possibly lower dimensional) sphere in $\mathbb{R}^n$, then we use $\bar{\sigma}_S(B)$ to denote  $\bar{\sigma}_S\left((B \times \mathbb{R}^{n-k}) \cap S\right)$. Similarly, a function $f\colon \mathbb{R}^k \rightarrow \mathbb{R}$ is canonically extended to $\mathbb{R}^n$ by using $``f(x, y)"$ to denote $f(x)$ for all $x \in \mathbb{R}^k$ and $y \in \mathbb{R}^{n-k}$.

For an element $x$ in a Hilbert space, $P_x$ is the projection operator onto the span of $x$. Let $\bar{\eta} = p_1 (u^{(1)})_{(k)} + \ldots + p_{\gamma} (u^{(\gamma)})_{(k)}$. With $I_k$ being the identity operator on $\mathbb{R}^k$, let ${\mu}_{\bar{\eta}, u^{(1)}, \ldots, u^{(\gamma)}}^{(k)}$ be the Gaussian measure on $\mathbb{R}^k$ with mean $\bar{\eta}$ and covariance 
\begin{align}\label{covariance}
    I_k - {\norm {(u^{(1)})_{(k)}}}^2 P_{(u^{(1)})_{(k)}} - \ldots -  {\norm {(u^{(\gamma)})_{(k)}}}^2 P_{(u^{(\gamma)})_{(k)}}.
\end{align}

We drop the superscript in ${\mu}_{\bar{\eta}, u^{(1)}, \ldots, u^{(\gamma)}}^{(k)}$ when the dimension $k$ is clear from context. Also, when the $u^{(i)}$ and $p_i$ are clear from context, we denote ${\mu}_{\bar{\eta}, u^{(1)}, \ldots, u^{(\gamma)}}$ by just $\mu$. If the $p_i$ are all zero, then we denote the corresponding measure by $\mu_0$.

\subsection{Description of key results}
In the notation set up above, the result of Peterson--Sengupta on limits of integrals on slices of high-dimensional spheres can be summarized as follows (see \cite[Theorem 2.1]{Sengupta-Peterson}).

\begin{theorem}[Peterson--Sengupta]\label{Peterson-Sungupta}
Let $f\colon \mathbb{R}^k \rightarrow \mathbb{R}$ be bounded and Borel measurable. If the Gaussian measure ${\mu}_{\bar{\eta}, u^{(1)}, \ldots, u^{(\gamma)}}$ has full support on $\mathbb{R}^k$, then
\begin{align*}
    \lim_{n \rightarrow \infty} \int_{S_{A_n}} f d{\bar{\sigma}} = \int_{\mathbb{R}^k} f d{\mu}_{\bar{\eta}, u^{(1)}, \ldots, u^{(\gamma)}}.
\end{align*}
\end{theorem}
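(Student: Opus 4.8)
The plan is to recast the asserted limit as a statement about a single hyperfinite-dimensional sphere and then identify a limiting density. By the nonstandard characterization of convergence of real sequences, it suffices to fix a hyperfinite $N > \mathbb{N}$ and show that ${^*}\!\int_{S_{A_N}} {^*}f \, d{^*}\bar{\sigma} \approx \int_{\mathbb{R}^k} f \, d\mu$. Since $f$, hence ${^*}f$, factors through $\pi_{(k)}$, I would rewrite the left-hand side as $\int_{{^*}\mathbb{R}^k} {^*}f \, d\nu_N$, where $\nu_N \defeq (\pi_{(k)})_* {^*}\bar{\sigma}_{S_{A_N}}$ is the internal pushforward to ${^*}\mathbb{R}^k$ of the uniform ${^*}$-surface measure on the slice $S_{A_N}$; equivalently, by the rotational symmetry of surface measure, $\nu_N$ is the $\pi_{(k)}$-image of the conditional distribution of a uniform point $x$ on $S^{N-1}(\sqrt{N})$ given $\{\inp{x}{(u^{(i)})_{(N)}} = p_i : i \in [\gamma]\}$. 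Everything now reduces to understanding the internal probability measure $\nu_N$ on ${^*}\mathbb{R}^k$.

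The heart of the argument is to compute the internal Radon--Nikodym density $\phi_N$ of $\nu_N$ against ${^*}$-Lebesgue measure and take standard parts. Here an elementary asymptotic linear algebra input is needed: since the $u^{(i)}$ are orthonormal in $\ell^2(\mathbb{R})$, the Gram matrices $G_n \defeq \left( \inp{(u^{(i)})_{(n)}}{(u^{(j)})_{(n)}} \right)_{i,j \in [\gamma]}$ converge to $I_\gamma$, so ${^*}G_N \approx I_\gamma$; in particular the $(u^{(i)})_{(N)}$ are ${^*}$-linearly independent and $S_{A_N}$ is an internal sphere of dimension $N - 1 - \gamma$ lying in the affine subspace $A_N$, with center $c_N$ the minimal-norm point of $A_N$ (so $\pi_{(k)}(c_N) \approx \bar{\eta}$) and squared radius $r_N^2 = N - \norm{c_N}^2$ with $\norm{c_N}^2 = \inp{{^*}G_N^{-1} \vec{p}}{\vec{p}}$ finite. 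Integrating out the fibers of $\pi_{(k)}$ on this sphere by means of the surface-area identity $\sigma_{S^d(a)}(S^d(a)) = c_d a^d$, one finds that $\phi_N(x)$ is, up to a bounded Jacobian factor and a normalizing constant, equal to $\left(1 - Q_N(x - \pi_{(k)}(c_N)) / r_N^2 \right)$ raised to the power $(N - k - \gamma - 2)/2$ on the ${^*}$-ellipsoid where this quantity is positive, where $Q_N$ is the internal positive-definite quadratic form inverse to the covariance operator \eqref{covariance} taken at level $N$ (i.e. to $\pi_{(k)} P_{H_N} \pi_{(k)}^{*}$). Taking standard parts and using $(1 - t/N)^{N/2} \to e^{-t/2}$ gives $\st \phi_N(x) =$ the Gaussian density of $\mu$ at $x$ for every finite $x \in {^*}\mathbb{R}^k$, the normalizing constants agreeing because both $\nu_N$ and $\mu$ are probability measures. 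The full-support hypothesis enters exactly here: it is equivalent to invertibility of the covariance \eqref{covariance}, which is what makes the limiting form $Q$ positive definite and the limiting density a bona fide Gaussian.

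To conclude for a \emph{merely} bounded measurable $f$, pointwise convergence of densities is not enough, since $\st({^*}f(x))$ need not equal $f(\st x)$ at near-standard $x$. I would instead upgrade to an ${^*}L^1$ estimate: $\phi_N$ and ${^*}\phi$ (with $\phi$ the density of $\mu$) are both probability densities with $\phi_N(x) \approx {^*}\phi(x)$ at every finite $x$, and concentration of measure on $S^{N-1}(\sqrt{N})$ forces $\nu_N$ to place infinitesimal mass outside each large finite ball, so a transferred Scheff\'e-type argument yields $\int_{{^*}\mathbb{R}^k} \abs{\phi_N - {^*}\phi} \, d({^*}\mathrm{Leb}) \approx 0$, i.e. $\nu_N$ and ${^*}\mu$ are infinitely close in ${^*}$-total variation. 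Then for bounded measurable $f$,
\[
\abs{ \int_{{^*}\mathbb{R}^k} {^*}f \, d\nu_N - \int_{{^*}\mathbb{R}^k} {^*}f \, d{^*}\mu } \le \norm{f}_{\infty} \int_{{^*}\mathbb{R}^k} \abs{\phi_N - {^*}\phi} \, d({^*}\mathrm{Leb}) \approx 0 ,
\]
while $\int_{{^*}\mathbb{R}^k} {^*}f \, d{^*}\mu = {^*}\!\left( \int_{\mathbb{R}^k} f \, d\mu \right) \approx \int_{\mathbb{R}^k} f \, d\mu$ since the latter is a standard real. Chaining these gives the claim for the fixed hyperfinite $N$, hence the theorem.

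The step I expect to be the main obstacle is the density computation of the second paragraph: because the affine slice $A_N$ is not aligned with the coordinate axes, disentangling the fibers of $\pi_{(k)}$ on $S_{A_N}$ and keeping track of the Jacobian and of the quadratic form $Q_N$ requires care, and this is precisely where the asymptotic linear algebra facts ($G_n \to I_\gamma$ together with the convergence of $c_N$ and $r_N^2$) are used. A secondary subtlety is the Scheff\'e upgrade, which is what genuinely promotes the bounded-continuous case — where weak convergence $\nu_N \to \mu$ would already suffice — to the bounded-measurable case.
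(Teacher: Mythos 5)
The paper does not reprove Theorem \ref{Peterson-Sungupta}; it quotes it from Peterson--Sengupta and proves an orthogonal generalization (Theorem \ref{all continuous functions}: bounded \emph{continuous} $f$, but with the full-support hypothesis dropped). Your proposal targets the theorem as stated, and is in essence the Peterson--Sengupta argument recast nonstandardly: disintegrate $\bar{\sigma}_{S_{A_N}}$ along $\pi_{(k)}$ to get an internal Radon--Nikodym density $\phi_N$, show $\st\phi_N$ is the Gaussian density $\phi$ of $\mu$, and invoke Scheff\'e to upgrade pointwise to $L^1$ (hence total-variation) convergence, which is precisely what handles merely bounded \emph{measurable} $f$. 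The paper's route is genuinely different: Lemma \ref{result for eventually zero vectors} uses the disintegration formula of Theorem \ref{Theorem of Sengupta} only in the axis-aligned case (finite-dimensional $u^{(i)}$, $p_i = 0$); the extension to arbitrary $u^{(i)} \in \ell^2(\mathbb{R})$ goes through overflow (Theorem \ref{hyperplane}) and an infinitesimal-rotation lemma (Theorem \ref{Rotation Claim}) that requires uniform continuity of $f$, followed by a translation-and-scaling step (Theorem \ref{main}) and an almost-everywhere-finiteness argument (Theorem \ref{almost all points are finite}). The trade-off is clean: your approach needs full support (to make $\phi$ a genuine density so Scheff\'e applies) but reaches all bounded measurable $f$; the paper's machinery needs continuity of $f$ but makes no assumption on the support of $\mu$.

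The one serious gap, which you flag yourself, is the density computation at hyperfinite $N$. Because $A_N$ is not axis-aligned, you must choose a $k$-dimensional subspace $W \subseteq H_N$ complementary to $H_N \cap \ker\pi_{(k)}$, change variables via $\pi_{(k)}|_W$, and only then apply Theorem \ref{Theorem of Sengupta} in the new coordinates; the quadratic form is indeed $Q_N = (\pi_{(k)} P_{H_N} \pi_{(k)}^{*})^{-1}$, but the Jacobian $\abs{\det(\pi_{(k)}|_W)}^{-1}$ must also be tracked in the density. The approximate orthonormality of the $(u^{(i)})_{(N)}$ --- the paper's Lemma \ref{independence} and Claim \ref{appendix claim}, your observation ${^*}G_N \approx I_\gamma$ --- ensures $W$, $Q_N$, and the Jacobian are well-behaved, and the full-support hypothesis makes $Q_N$ nondegenerate in the standard-part limit, giving $\st\phi_N(x) = \phi(x)$ at every finite $x$. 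For the Scheff\'e step a cleaner formulation is to first establish, in the standard universe, that $\phi_n \to \phi$ pointwise and hence $\norm{\phi_n - \phi}_{L^1(\mathbb{R}^k)} \to 0$, then conclude ${^*}\norm{\phi_N - {^*}\phi}_{L^1} \approx 0$ by the nonstandard characterization of limits; your remark about concentration of measure is then superfluous, since Scheff\'e already controls the tail through the normalization $\int\phi_n = \int\phi = 1$.
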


Nonstandard analysis allows us to view the limit of spherical integrals as (the standard part of) another ``spherical integral'' in a hyperfinite dimension. We will use this idea to generalize Theorem \ref{Peterson-Sungupta} for bounded continuous functions in the case when ${\mu}_{\bar{\eta}, u^{(1)}, \ldots, u^{(\gamma)}}$ does not necessarily have full support on $\mathbb{R}^k$. Our proof is done in several steps of increasing complexity:
 
\begin{enumerate}[(i)]
    \item We first prove Theorem \ref{Peterson-Sungupta} in the case when the coordinates of the vectors $u^{(1)}, \ldots, u^{(\gamma)}$ are zero after a finite index, and the $p_i$ are zero(this is done in the next section---see Lemma \ref{result for eventually zero vectors} and Proposition \ref{basic}).
    
    \item In Section 3, we continue in the case when the $p_i$ are zero (we call this the \textit{case of great circles}), and use overflow to obtain an approximation result for Loeb integrals over a hyperfinite dimensional sphere intersected with an internal affine subspace defined by a hyperfinite truncation of the $u^{(i)}$. Some continuity properties of our integrals then yield the limiting result for bounded uniformly continuous functions in the case of great circles. 
    
    \item We use the scaling and translation properties of the surface area measures to generalize the result for bounded uniformly continuous functions further to the case of non-great circles. See Theorem \ref{main}.
    \item Using Theorem \ref{main}, it follows that almost all points of $S_{A_N}$ (where $N$ is hyperfinite) have finite coordinates along any given direction (this is Theorem \ref{almost all points are finite}). Using this and the notion of $S$-integrability, we are able to finally generalize to all bounded continuous functions (see Theorem \ref{all continuous functions}).
\end{enumerate}

We have tried to keep the nonstandard prerequisites at a minimum. Only a basic familiarity with nonstandard extensions and Loeb measures is needed. While the background provided in Section 2 of \cite{Alam} is sufficient, the interested reader may consult books such as \cite{Albeverio}, \cite{Cutland}, and \cite{Robinson} for further details. We also use a fact from asymptotic linear algebra with a nonstandard proof (Lemma \ref{independence}).
\end{section}

\begin{section}{Integrating bounded functions on certain great circles}
In this section, we prove Theorem \ref{Peterson-Sungupta} in the case when the following hold:
\begin{enumerate}[(i)]
    \item\label{Section 1} The function $f\colon \mathbb{R}^k \rightarrow \mathbb{R}$ is bounded measurable.
    \item\label{Section 2} All the $p_i$ are zero (thus $\bar{\eta}$ is the zero vector in $\mathbb{R}^k$ in this case).
    \item\label{Section 3} The vectors $u^{(i)} \in \ell^2(\mathbb{R})$ are finite-dimensional (their sequence representations with respect to the standard basis have finitely many nonzero terms).
\end{enumerate} 

We will make use of a disintegration formula from \cite{Sengupta}, which we quote below.

\begin{theorem}{\cite[Proposition 4.1, p. 19]{Sengupta}}\label{Theorem of Sengupta}
Let $N$ and $k$ be positive integers with $k < N$, and $f$ any bounded measurable function on $S^{N-1}(a)$, the sphere in $\mathbb{R}^N = \mathbb{R}^k \times \mathbb{R}^{N-k}$ of radius $a$ and with center $0$. Then, with $\sigma$ denoting surface measure (non-normalized) on spheres, we have the following for all $a \in \mathbb{R}_{>0}$: 
\begin{align}\label{Sengupta's formula}
    \int_{z \in S^{N-1}(a)} f(z) d\sigma(z) = \int_{x \in B_k(a)}\left( \int_{y \in S^{N-k-1}(a_x)} f(x, y) d\sigma(y)\right) \frac{a}{a_x} dx,
\end{align}
where $a_x = \sqrt{a^2 - \norm{x}^2}$ and $B_k(a)$ is the open ball of radius $a$ in $\mathbb{R}^k$.
\end{theorem}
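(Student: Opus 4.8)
The plan is to realize $S^{N-1}(a)$, up to a surface-measure-null set, as the image of an explicit parametrization fibered over the open ball $B_k(a)$, and then to compute its Jacobian directly. For $x \in B_k(a)$ put $a_x = \sqrt{a^2 - \norm{x}^2} > 0$ and consider
$$\Phi\colon B_k(a) \times S^{N-k-1}(1) \longrightarrow S^{N-1}(a), \qquad \Phi(x, \omega) = (x,\, a_x\, \omega).$$
One checks directly that $\Phi$ is a smooth bijection onto $S^{N-1}(a) \setminus \big(S^{k-1}(a) \times \{0\}\big)$; the excluded set is a $(k-1)$-dimensional submanifold of the $(N-1)$-sphere (since $k<N$), hence $\sigma$-null, and its projection $\partial B_k(a)$ is Lebesgue-null in $\mathbb{R}^k$, so it affects neither side of \eqref{Sengupta's formula}. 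The area formula for $(N-1)$-dimensional surface measure, applied to the diffeomorphism $\Phi$ --- a change-of-variables identity that holds for every nonnegative measurable $f$, and hence, on splitting into positive and negative parts, for every bounded measurable $f$ --- then reduces \eqref{Sengupta's formula} to computing the volume distortion $J\Phi$ of $\Phi$.

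For this I would fix a point $(x,\omega)$, an orthonormal basis $e_1,\dots,e_k$ of $\mathbb{R}^k$, and an orthonormal basis $\tau_1,\dots,\tau_{N-k-1}$ of $T_\omega S^{N-k-1}(1) = \omega^{\perp}\cap\mathbb{R}^{N-k}$. Since $\partial_{x_i}a_x = -x_i/a_x$, one gets $D\Phi(e_i,0) = (e_i,\,-\tfrac{x_i}{a_x}\omega)$ and $D\Phi(0,\tau_j) = (0,\,a_x\tau_j)$, where $x_i = \langle x,e_i\rangle$. Using $\omega\perp\tau_j$ and $\norm{\omega}=1$, the Gram matrix of these $N-1$ image vectors is block diagonal with blocks $I_k + a_x^{-2}\,xx^{T}$ (viewing $x$ as a column vector) and $a_x^2\,I_{N-k-1}$; by the matrix determinant lemma its determinant equals $\big(1+\norm{x}^2/a_x^2\big)\,a_x^{2(N-k-1)} = (a^2/a_x^2)\,a_x^{2(N-k-1)}$, whence $J\Phi(x,\omega) = a\,a_x^{\,N-k-2}$. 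Substituting into the area formula yields
$$\int_{S^{N-1}(a)} f\, d\sigma = \int_{B_k(a)} \bigg( \int_{S^{N-k-1}(1)} f(x,\,a_x\omega)\, a\, a_x^{N-k-2}\, d\sigma(\omega) \bigg)\, dx,$$
and rescaling the fiber via the dilation $\omega\mapsto a_x\omega$ --- which multiplies $(N-k-1)$-dimensional surface measure by $a_x^{N-k-1}$ --- turns the weight $a\,a_x^{N-k-2}$ into $\tfrac{a}{a_x}$ relative to $\sigma$ on $S^{N-k-1}(a_x)$, which is exactly \eqref{Sengupta's formula}.

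The only step requiring genuine care is this Jacobian computation: the surviving weight must come out as $a/a_x$ and not its reciprocal, and the argument should be re-examined in the degenerate range $N-k=1$, where $S^{N-k-1}(a_x)=\{\pm a_x\}$ carries the counting measure of total mass $c_0=2$ and the $\tau_j$-block is empty (the computation still produces $J\Phi = a/a_x$). A parametrization-free alternative leading to the same conclusion is to apply the smooth coarea formula to the projection $\pi(x,y)=x$ on $S^{N-1}(a)$, whose fibers are the spheres $\{x\}\times S^{N-k-1}(a_x)$: the same linear algebra, carried out now on the $k$-dimensional subspace of $T_{(x,y)}S^{N-1}(a)=(x,y)^{\perp}$ spanned by the vectors $\big(e_i,-\tfrac{x_i}{\norm{y}^2}y\big)$, $1\le i\le k$, shows that the normal Jacobian of $\pi$ equals $a_x/a$, so the coarea formula outputs \eqref{Sengupta's formula} with the reciprocal weight $a/a_x$.
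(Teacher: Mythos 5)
The paper does not prove this statement at all --- it is imported wholesale as a citation to \cite[Proposition 4.1]{Sengupta}, so there is no in-paper argument to compare against. Your proposal supplies a complete, self-contained proof where the paper only quotes, and it is correct. The parametrization $\Phi(x,\omega)=(x,a_x\omega)$ is a smooth bijection from $B_k(a)\times S^{N-k-1}(1)$ onto $S^{N-1}(a)$ minus the equatorial set $S^{k-1}(a)\times\{0\}$, which is $(k-1)$-dimensional and hence $\sigma$-null since $k<N$; the off-diagonal Gram block $\langle D\Phi(e_i,0),D\Phi(0,\tau_j)\rangle = -x_i\langle\omega,\tau_j\rangle$ does vanish because $\tau_j\perp\omega$; the matrix determinant lemma gives $\det(I_k+a_x^{-2}xx^{T})=a^2/a_x^2$ and hence $J\Phi = a\,a_x^{N-k-2}$; and rescaling the fiber by the homothety $\omega\mapsto a_x\omega$ (scaling $(N-k-1)$-dimensional measure by $a_x^{N-k-1}$) turns $a\,a_x^{N-k-2}$ into $a/a_x$, as required. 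Your check of the boundary case $N-k=1$ is also right: the $\tau$-block is empty, $J\Phi=a/a_x$, and $S^0(a_x)=\{\pm a_x\}$ carries counting measure of total mass $c_0=2\pi^{1/2}/\Gamma(1/2)=2$, consistent with the paper's normalization of $c_d$. The coarea alternative is valid too and is the cleanest way to see why the exponent comes out with the sign it does: the projection $\pi(x,y)=x$ has normal Jacobian $a_x/a$, whose reciprocal is exactly the weight in \eqref{Sengupta's formula}. The only caveat worth flagging is that the area formula you invoke has domain $B_k(a)\times S^{N-k-1}(1)$, which is a Riemannian product manifold rather than an open subset of $\mathbb{R}^{N-1}$; the Gram-matrix form of the Jacobian you use is precisely the correct formulation in that setting, but a fussy reader might want you to say explicitly that you are using the area formula for Lipschitz maps between rectifiable sets, or to reduce to the classical Euclidean-domain version by a local parametrization of $S^{N-k-1}(1)$ --- either way, nothing in the computation changes.
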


The following lemma ensures that the Gaussian measures appearing in the paper are well-defined.

\begin{lemma}\label{consistency} For orthonormal vectors $v^{(1)}, \ldots, v^{(\gamma)}$ in $\ell^2(\mathbb{R})$, the $(k \times k)$ matrix $I - {\norm {(v^{(1)})_{(k)}}}^2 P_{(v^{(1)})_{(k)}} - \ldots -  {\norm {(v^{(\gamma)})_{(k)}}}^2 P_{(v^{(\gamma)})_{(k)}}$ is positive-semidefinite.  In particular, it is the covariance matrix of a Gaussian measure on $\mathbb{R}^k$.
\end{lemma}

\begin{proof}
Let $x \in \mathbb{R}^k$. Then,
\begin{align*}
    &\inp*{x}{\left( I - {\norm {(v^{(1)})_{(k)}}}^2 P_{(v^{(1)})_{(k)}} - \ldots -  {\norm {(v^{(\gamma)})_{(k)}}}^2 P_{(v^{(\gamma)})_{(k)}} \right)x} \\
    = &\norm{x}^2 - \sum_{i = 1}^{\gamma} \inp*{x}{\norm{(v^{(i)})_{(k)}}^2 \inp*{x}{\frac{(v^{(i)})_{(k)}}{\norm{(v^{(i)})_{(k)}}}}{\frac{(v^{(i)})_{(k)}}{\norm{(v^{(i)})_{(k)}}}}} \\
    = &\norm{x}^2 - \sum_{i=1}^{\gamma} \inp*{x}{(v^{(i)})_{(k)}}^2 \\
    \geq &\norm{x}^2 - \sum_{i=1}^{\gamma} \left({\inp*{x}{v^{(i)}}}_{\ell^2}\right)^2 \\
    \geq &\norm{x}^2 - \norm{x}^2 = 0,
\end{align*}
which completes the proof.
\end{proof}

\begin{notation}
The Gaussian measure on $\mathbb{R}^k$ with the above covariance and mean $\rho \in \mathbb{R}^k$ will be denoted by $\mu_{\rho; v^{(1)}, \ldots, v^{(\gamma)}}$. In general, for a positive-semidefinite $(k \times k)$-matrix $L$, we will also use $\mu_{\rho, L}$ to denote the Gaussian  measure on $\mathbb{R}^k$ with mean $\rho$ and covariance $L$.
\end{notation}

We now study the simplest case when the $u^{(i)}$ are all in $\mathbb{R}^k$, the domain of $f$.

\begin{lemma}\label{result for eventually zero vectors}
Let $f\colon \mathbb{R}^k \rightarrow \mathbb{R}$ be a bounded measurable function. Let $u^{(1)}, \ldots, u^{(\gamma)}$ be mutually orthonormal vectors in $\mathbb{R}^k$ (hence, $\gamma \leq k$ necessarily). Then we have:
\begin{align*}
   \lim_{n \rightarrow \infty} \int_{S_{n, u^{(1)}, \ldots, u^{(\gamma)}}} f d{\bar{\sigma}} = \int_{\mathbb{R}^k} f d{\mu_{0;u^{(1)}, \ldots u^{(\gamma)}}}.
\end{align*}
\end{lemma}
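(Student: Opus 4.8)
The plan is to reduce the spherical integral over $S_{n,u^{(1)},\dots,u^{(\gamma)}}$ to a Gaussian integral on $\mathbb{R}^k$ by first rotating so that the constraint vectors become standard basis vectors, then disintegrating the sphere over its first $k$ coordinates. Since $u^{(1)},\dots,u^{(\gamma)}$ are mutually orthonormal vectors in $\mathbb{R}^k$, there is an orthogonal transformation $R$ of $\mathbb{R}^k$ sending $u^{(i)}$ to $e_i$ for $i \in [\gamma]$; extend it to an orthogonal transformation of $\mathbb{R}^n$ (for every $n \geq k$) acting as the identity on the last $n-k$ coordinates. Because surface measure on $S^{n-1}(\sqrt n)$ is rotation-invariant, and because $R$ maps $S_{n,u^{(1)},\dots,u^{(\gamma)}}$ onto $S^{n-1}(\sqrt n) \cap \{x : x_1 = \dots = x_\gamma = 0\}$, the integral $\int_{S_{n,u^{(1)},\dots,u^{(\gamma)}}} f\, d\bar\sigma$ equals $\int_{S^{n-1}(\sqrt n) \cap \{x_1=\dots=x_\gamma=0\}} (f \circ R^{-1})\, d\bar\sigma$. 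Note $f \circ R^{-1}$ is again bounded measurable, and its Gaussian integral against $\mu_{0;e_1,\dots,e_\gamma}$ equals $\int_{\mathbb{R}^k} f\, d\mu_{0;u^{(1)},\dots,u^{(\gamma)}}$ by the change-of-variables formula for Gaussian measures, since $R$ carries the covariance $I_k - \sum P_{u^{(i)}}$ to $I_k - \sum P_{e_i}$, which is the covariance diag$(0,\dots,0,1,\dots,1)$ ($\gamma$ zeros). So it suffices to prove the statement when $u^{(i)} = e_i$.

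In that case the set $S^{n-1}(\sqrt n) \cap \{x_1 = \dots = x_\gamma = 0\}$ is exactly the sphere $S^{n-\gamma-1}(\sqrt n)$ sitting inside the copy of $\mathbb{R}^{n-\gamma}$ spanned by $e_{\gamma+1},\dots,e_n$, with its uniform surface measure. Using the convention from the excerpt, for $B \in \mathcal{B}(\mathbb{R}^k)$ we interpret $f$ on this set as a function of the coordinates $x_{\gamma+1},\dots,x_k$ only. Now apply Theorem 2.4 (Sengupta's disintegration formula, Proposition 4.1 of \cite{Sengupta}) to the sphere $S^{n-\gamma-1}(\sqrt n)$ in $\mathbb{R}^{n-\gamma} = \mathbb{R}^{k-\gamma} \times \mathbb{R}^{n-k}$, splitting off the first $k-\gamma$ of the free coordinates: with $a = \sqrt n$ and $m := k-\gamma$,
\begin{align*}
\int_{S^{n-\gamma-1}(\sqrt n)} f\, d\sigma = \int_{w \in B_m(\sqrt n)} \left( \int_{y \in S^{n-k-1}(a_w)} d\sigma(y)\right) \frac{\sqrt n}{a_w} f(w)\, dw,
\end{align*}
where $a_w = \sqrt{n - \norm{w}^2}$. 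Dividing by the total measure $\sigma(S^{n-\gamma-1}(\sqrt n))$ and using $\sigma(S^{n-k-1}(a_w)) = c_{n-k-1} a_w^{n-k-1}$, $\sigma(S^{n-\gamma-1}(\sqrt n)) = c_{n-\gamma-1} (\sqrt n)^{n-\gamma-1}$, the spherical average becomes
\begin{align*}
\int_{S_{n,e_1,\dots,e_\gamma}} f\, d\bar\sigma = \frac{c_{n-k-1}}{c_{n-\gamma-1}} \int_{w \in B_m(\sqrt n)} f(w) \left(1 - \frac{\norm w^2}{n}\right)^{\frac{n-k-2}{2}} \frac{dw}{n^{(k-\gamma)/2}}.
\end{align*}
The right-hand side is a standard "Poincaré-type" integral: substituting $w = \sqrt n\, t$ one sees that $\left(1 - \norm w^2/n\right)^{(n-k-2)/2} \to e^{-\norm w^2/2}$ pointwise as $n \to \infty$, and a Stirling-asymptotics computation gives $c_{n-k-1}/(c_{n-\gamma-1} n^{(k-\gamma)/2}) \to (2\pi)^{-(k-\gamma)/2}$. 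Dominated convergence (using boundedness of $f$ and a uniform bound $\left(1-\norm w^2/n\right)^{(n-k-2)/2} \leq e^{-c\norm w^2}$ valid for large $n$ on the region where the base is positive, extended by zero elsewhere) then yields the limit $(2\pi)^{-m/2}\int_{\mathbb{R}^m} f(w) e^{-\norm w^2/2}\, dw$, which is precisely $\int_{\mathbb{R}^k} f\, d\mu_{0;e_1,\dots,e_\gamma}$ since that Gaussian measure is the standard Gaussian on the last $k-\gamma$ coordinates and a point mass at $0$ on the first $\gamma$.

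The main obstacle is the analytic limit in the last step: controlling the ratio of the constants $c_{n-k-1}/c_{n-\gamma-1}$ together with the $n^{-(k-\gamma)/2}$ factor via Stirling, and justifying the passage to the limit inside the integral uniformly in $n$ — in particular handling the truncation to $B_m(\sqrt n)$ and the behavior of $\left(1-\norm w^2/n\right)^{(n-k-2)/2}$ near the boundary of the ball. The rotation and disintegration reductions are routine; the delicate point is the dominated-convergence argument, which requires an $n$-independent integrable majorant such as $C\|f\|_\infty e^{-\norm w^2/4}$ valid for all sufficiently large $n$. Once that majorant is in place, the result follows. (One could alternatively phrase the whole limit nonstandardly, transferring the formula above to a hyperfinite $n = N$ and taking standard parts, but the elementary dominated-convergence route suffices here.)
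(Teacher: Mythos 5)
Your proposal is correct and follows essentially the same route as the paper: a rotation/isometry reducing $S_{n,u^{(1)},\ldots,u^{(\gamma)}}$ to the lower-dimensional sphere $S^{n-\gamma-1}(\sqrt n)$, followed by Sengupta's disintegration formula, and then a dominated-convergence passage to the limit with the Gamma-ratio (Stirling) asymptotics handling the constants. The paper phrases the reduction via an explicit measure isomorphism $T$ (built from an orthonormal extension $\{z^{(1)},\ldots,z^{(k-\gamma)}\}$) and writes $g = f\circ(\text{that isometry})$, and it packages the constant asymptotics as $a_{n,k},b_{n,k}\to 1$ rather than invoking Stirling by name, but these are cosmetic differences — the dominating function $\|f\|_\infty e^{-\|w\|^2/4}$ and the final identification with $\mu_{0;u^{(1)},\ldots,u^{(\gamma)}}$ via Fubini match yours.
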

\begin{proof}
Without loss of generality, let $\gamma < k$ (if $\gamma = k$, then $u^{(1)}, \ldots, u^{(\gamma)}$ span $\mathbb{R}^k$, and hence the above equality is trivial with both sides being identical to zero).  Let $\{u^{(1)}, \ldots, u^{(\gamma)}, z^{(1)}, \ldots z^{(k - \gamma)}\}$ be an orthonormal basis of $\mathbb{R}^k$. Define $g \colon \mathbb{R}^{k- \gamma} \rightarrow \mathbb{R}$ by $g(y_1, \ldots, y_{k - \gamma}) = f(y_1z^{(1)} + \ldots + y_{k - \gamma}z^{(k - \gamma)})$. 

The map $T \colon S_{n, u^{(1)}, \ldots, u^{(\gamma)}} \rightarrow S^{n - \gamma - 1}(\sqrt{n})$ defined as follows is a measure isomorphism:
$$T\left(\sum_{i = 1}^{k - \gamma} y_iz^{(i)} + \sum_{j = k + 1}^n y_j e_j \right) \defeq \sum_{i = 1}^{n - \gamma} y_ie_i.$$
It thus follows that for any bounded measurable function $f\colon \mathbb{R}^k \rightarrow \mathbb{R}$, we have:
\begin{align*}
      &\int_{S_{n, u^{(1)}, \ldots, u^{(\gamma)}}} f(x_1, \ldots, x_k) d{\bar{\sigma}}(x_1, \ldots, x_n) \\
       = &\int_{S^{n - \gamma -1}(\sqrt{n})} g(y_1, \ldots, y_{k - \gamma}) d{\bar{\sigma}}(y_1, \ldots, y_{n- \gamma}) \\
     = &\frac{1}{\sigma(S^{n - \gamma - 1}(\sqrt{n}))} \int_{S^{n - \gamma -1}(\sqrt{n})} g(y_1, \ldots, y_{k - \gamma}) d{\sigma}(y_1, \ldots, y_{n- \gamma}) \\
     = &\frac{1}{c_{n - \gamma - 1} n^\frac{n - \gamma -1}{2}}  \int\limits_{B_{k - \gamma}(\sqrt{n})}  \frac{g(\vec{y}) n^{\frac{1}{2}}}{(n - {\norm{\vec{y}}}^2)^{\frac{1}{2}}} \cdot \sigma\left(S^{n - \gamma - 1 - (k - \gamma)}\left(\sqrt{n - {\norm{\vec{y}}}^2}\right)\right) d{\lambda}(\vec{y}), 
\end{align*}
where $\vec{y} = (y_1, \ldots, y_{k-\gamma})$. We have used Theorem \ref{Theorem of Sengupta} in the last equality above. Simplifying further, we thus obtain the following:
\begin{align}
     &\int_{S_{n, u^{(1)}, \ldots, u^{(\gamma)}}} f(x_1, \ldots, x_k) d{\bar{\sigma}}(x_1, \ldots, x_n) \nonumber \\
     = &\frac{\left(\frac{2\pi^{\frac{n-k}{2}}}{\Gamma\left({\frac{n-k}{2}}\right)} \right)}{\left(\frac{2\pi^{\frac{n-\gamma}{2}}}{\Gamma\left({\frac{n-\gamma}{2}}\right)} \right)} \cdot \frac{1}{n^\frac{n - \gamma -2}{2}} \cdot \int_{B_{k - \gamma}(\sqrt{n})} g(y_1, \ldots, y_{k - \gamma}) {(n - {\norm{\vec{y}}}^2)^{\frac{n - k - 2}{2}}} d{\lambda}(\vec{y}) \nonumber \\
     = &\frac{1}{{(2\pi)}^{\frac{k- \gamma}{2}}} \cdot \frac{\Gamma \left( \frac{n - k}{2} + \frac{k - \gamma}{2}\right)}{\Gamma \left( \frac{n-k}{2}\right) \cdot {\left( \frac{n - k}{2}\right)}^{\frac{k- \gamma}{2}}}  \nonumber \\
     &~\hspace{1cm} \cdot\int_{B_{k - \gamma}(\sqrt{n})} g(y_1, \ldots, y_{k - \gamma}) \cdot \frac{\left((n - {\norm{\vec{y}}}^2)^{\frac{n - k - 2}{2}}\right) \cdot {\left( n - k\right)}^{\frac{k- \gamma}{2}}}{n^\frac{n - \gamma -2}{2}} 
     d{\lambda}(\vec{y}) \nonumber \\
     = &\frac{a_{n,k}b_{n,k}}{{(2\pi)}^{\frac{k- \gamma}{2}}} \cdot \int_{B_{k - \gamma}(\sqrt{n})} g(y_1, \ldots, y_{k - \gamma}) \cdot {\left( 1 - \frac{{\norm{\vec{y}}}^2}{n}\right)}^{\frac{n - k -2}{2}}
     d{\lambda}(\vec{y}), \label{disintegration}
\end{align}
where $a_{n,k} =  \frac{\Gamma \left( \frac{n - k}{2} + \frac{k - \gamma}{2}\right)}{\Gamma \left( \frac{n-k}{2}\right) \cdot {\left( \frac{n - k}{2}\right)}^{\frac{k- \gamma}{2}}}$ and $b_{n,k} = {\left( 1 - \frac{k}{n} \right)}^{\frac{k - \gamma}{2}}$. Note that $$\lim_{n \rightarrow \infty} a_{n, k} = 1 = \lim_{n \rightarrow \infty} b_{n, k}.$$

Since $f$ is bounded, therefore for large values of $n$, the integrand in \eqref{disintegration} is bounded by $\norm{f}_{\infty} \cdot e^{-\frac{\norm{y}^2}{4}}$ in absolute value, the latter being integrable on $\mathbb{R}^{k - \gamma}$. We thus obtain the following by dominated convergence theorem:
\begin{align}
    & \lim_{n \rightarrow \infty} \int_{S_{n, u^{(1)}, \ldots, u^{(\gamma)}}} f(x_1, \ldots, x_k) d{\bar{\sigma}}(x_1, \ldots, x_n) \nonumber \\
    = &\frac{1}{{(2\pi)}^{\frac{k- \gamma}{2}}} \cdot \int_{{\mathbb{R}}^{k - \gamma}} g(y_1, \ldots, y_{k - \gamma}) \cdot e^{\frac{{- \norm{\vec{y}}}^2}{2}}d{\lambda}(\vec{y}) \nonumber \\
    = &\frac{1}{{(2\pi)}^{\frac{k- \gamma}{2}}} \cdot \int_{{\mathbb{R}}^{k - \gamma}} f(y_1z^{(1)} + \ldots + y_{k - \gamma}z^{(k - \gamma)}) \cdot e^{\frac{{- \norm{\vec{y}}}^2}{2}}d{\lambda}(\vec{y}) \nonumber \\
     = &\frac{1}{{(2\pi)}^{\frac{k}{2}}} \cdot \int_{{\mathbb{R}}^{k}} f(y_1z^{(1)} + \ldots + y_{k - \gamma}z^{(k - \gamma)}) \cdot e^{ - \frac{{y_1}^2 + \ldots + {y_k}^2}{2}} d{\lambda}(\tilde{y}), \label{Tonelli}
\end{align}
where $\tilde{y} = (y_1, \ldots, y_k)$.  \eqref{Tonelli} follows from Fubini's theorem, using the fact that the integral over the last $\gamma$ coordinates is ${(2\pi)}^{\frac{\gamma}{2}}$. Rewriting \eqref{Tonelli}, we have:
\begin{align}
     \lim_{n \rightarrow \infty} \int_{S_{n, u^{(1)}, \ldots, u^{(\gamma)}}} f d{\bar{\sigma}} \nonumber
     &= \frac{1}{{(2\pi)}^{\frac{k}{2}}} \cdot \int_{{\mathbb{R}}^{k}} f(P_{u^{(1)}, \ldots, u^{(\gamma)}} \left( \tilde{y} \right)) \cdot e^{\frac{{- \norm{\tilde{y}}}^2}{2}}d{\lambda}(\tilde{y}) \nonumber\\
     &= \int_{{\mathbb{R}}^k} f d{{\mu}_{0; u^{(1)}, \ldots, u^{(\gamma)}}} \label{last line of eventually zero calculation},
\end{align}
completing the proof.
\end{proof}

The following basic fact about Gaussian measures allows us to strengthen Lemma \ref{result for eventually zero vectors} to the case when the vectors $u^{(1)}, \ldots u^{(\gamma)}$ are vectors in $\ell^2(\mathbb{R})$ that are eventually zero (but not necessarily zero after the $k^{\text{th}}$ coordinate)

\begin{lemma}\label{first claim}
Let $u^{(1)},\ldots, u^{(\gamma)}$ be orthonormal vectors in $\mathbb{R}^m$ where $m \in \mathbb{N}_{>k}$. Let $\mu'$ be the Gaussian measure on $\mathbb{R}^m$ with mean $\mathbf{0} \in \mathbb{R}^m$ and covariance $I - \sum_{i = 1}^{\gamma} P_{u^{(i)}}$. Let ${\mu}_{0; u^{(1)}, \ldots, u^{(\gamma)}}$ be the Gaussian measure on $\mathbb{R}^k$ with mean $0 \in \mathbb{R}^k$ and covariance as in \eqref{covariance}. For any bounded measurable function $f\colon\mathbb{R}^k \rightarrow \mathbb{R}$, we have:
 \begin{align}\label{m to k}
     \int_{{\mathbb{R}}^{m}} f(x_1, \ldots, x_{m}) d{\mu'} = \int_{{\mathbb{R}}^k} f(x_1, \ldots, x_k) d{{\mu}_{0; u^{(1)}, \ldots, u^{(\gamma)}}}.
 \end{align}
 \end{lemma}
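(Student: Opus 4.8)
The plan is to reduce \eqref{m to k} to a statement about the first-$k$-coordinates marginal of $\mu'$. Since the integrand $f(x_1,\dots,x_m)$ depends only on $\pi_{(k)}(x)=(x_1,\dots,x_k)$, the image-measure (change-of-variables) formula gives
\[
\int_{\mathbb{R}^m} f(x_1,\dots,x_m)\,d\mu'(x) = \int_{\mathbb{R}^k} f\, d\bigl((\pi_{(k)})_*\mu'\bigr),
\]
so it suffices to show that the pushforward $(\pi_{(k)})_*\mu'$ equals ${\mu}_{0; u^{(1)}, \ldots, u^{(\gamma)}}$. The pushforward of a (possibly degenerate) Gaussian measure under a linear map is again Gaussian, with mean the image of the mean---here $\pi_{(k)}(\mathbf{0}) = 0$---and covariance $\Pi C\Pi^{T}$, where $\Pi$ is the $k\times m$ matrix of $\pi_{(k)}$ and $C = I_m - \sum_{i=1}^{\gamma}P_{u^{(i)}}$ is the covariance of $\mu'$. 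Two Gaussian (indeed, any two finite Borel) measures on $\mathbb{R}^k$ with the same mean coincide once their covariances agree---for instance because their characteristic functions then agree---so the lemma reduces to the matrix identity $\Pi C\Pi^{T} = I_k - \sum_{i=1}^{\gamma}\norm{(u^{(i)})_{(k)}}^2 P_{(u^{(i)})_{(k)}}$, i.e.\ to matching $\Pi C\Pi^{T}$ with the covariance in \eqref{covariance}.

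The key computation is the rank-one identity $\Pi\,P_{u^{(i)}}\,\Pi^{T} = \norm{(u^{(i)})_{(k)}}^2\,P_{(u^{(i)})_{(k)}}$ for each $i$. Indeed, since $u^{(i)}$ is a unit vector in $\mathbb{R}^m$ we have $P_{u^{(i)}} = u^{(i)}(u^{(i)})^{T}$, and $\Pi u^{(i)} = (u^{(i)})_{(k)}$, so $\Pi P_{u^{(i)}}\Pi^{T} = (u^{(i)})_{(k)}\,((u^{(i)})_{(k)})^{T}$; this rank-one matrix is exactly $\norm{(u^{(i)})_{(k)}}^2$ times the orthogonal projection onto the line spanned by $(u^{(i)})_{(k)}$ (when $(u^{(i)})_{(k)} = 0$ both sides vanish). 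Combined with $\Pi\Pi^{T} = I_k$, this yields $\Pi C\Pi^{T} = I_k - \sum_{i=1}^{\gamma}\norm{(u^{(i)})_{(k)}}^2 P_{(u^{(i)})_{(k)}}$, which is precisely the covariance \eqref{covariance} of ${\mu}_{0; u^{(1)}, \ldots, u^{(\gamma)}}$. Hence $(\pi_{(k)})_*\mu' = {\mu}_{0; u^{(1)}, \ldots, u^{(\gamma)}}$, and integrating $f$ against both sides gives \eqref{m to k}.

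I do not expect a serious obstacle here. The only points needing a little care are: (a) the passage from the integral over $\mathbb{R}^m$ to the integral of $f$ against the pushforward, which is just the image-measure formula and is valid for bounded measurable $f$; and (b) the fact that the covariances involved may be singular (e.g.\ when some $(u^{(i)})_{(k)}$ has norm $1$), which is why the argument is phrased through characteristic functions rather than Lebesgue densities. A completely elementary alternative that avoids characteristic functions is to extend $u^{(1)},\dots,u^{(\gamma)}$ to an orthonormal basis of $\mathbb{R}^m$, describe $\mu'$ in those coordinates as the product of point masses at $0$ along the directions $u^{(i)}$ with a standard Gaussian on the orthogonal complement, and then compute the $(x_1,\dots,x_k)$-marginal by hand; but the covariance-matching computation above is the shortest route.
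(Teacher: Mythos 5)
Your proof is correct and takes essentially the same route as the paper's: both reduce the claim to identifying the $k\times k$ top-left block of the covariance $I_m - \sum_i P_{u^{(i)}}$ with the matrix in \eqref{covariance} (you phrase it as $\Pi C\Pi^{T}$, the paper as $\Sigma_{(k,k)}$ acting on $e_1,\dots,e_k$), and both then invoke the fact that a linear image (or marginal) of a Gaussian is Gaussian with the correspondingly transformed mean and covariance. The framing differs only cosmetically—the paper first reduces to indicator functions before quoting the marginal-of-a-Gaussian fact, whereas you state the equality of pushforward measures directly and then integrate—but the underlying argument and the key computation are the same.
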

 
\begin{proof}
The collection of functions satisfying (\ref{m to k}) is closed under taking $\mathbb{R}$-linear combinations and uniform limits. Hence, it is enough to show that indicator functions of Borel subsets of $\mathbb{R}^k$ satisfy (\ref{m to k}). If $X \sim N (\rho, \Sigma)$ is an $m$-dimensional Gaussian random variable, then $X_{(k)} \sim N({\rho}_{(k)}, \Sigma_{(k,k)})$. Let $\rho = \mathbf{0} \in \mathbb{R}^m$ and $\Sigma$ be the matrix of the operator on $\mathbb{R}^m$ given by $I_m - \sum_{i = 1}^{\gamma} P_{u^{(i)}}$. Note that for any $i \in \{1, \ldots, \gamma\}$ and $j \in \{1, \ldots, k\}$, we have $\inp*{e_j}{u^{(i)}} = \inp*{e_j}{(u^{(i)})_{(k)}}$, which implies
 \begin{align*}
     (P_{u^{(i)}}(e_j))_{(k)} &= (\inp{e_j}{u^{(i)}}u^{(i)})_{(k)} \\
     &= \norm{(u^{(i)})_{(k)}}^2 \inp*{e_j}{\frac{(u^{(i)})_{(k)}}{\norm{(u^{(i)})_{(k)}}}}\frac{(u^{(i)})_{(k)}}{\norm{(u^{(i)})_{(k)}}} \\
     &= \norm{(u^{(i)})_{(k)}}^2P_{(u^{(i)})_{(k)}}(e_j). 
 \end{align*}
 
Thus the operator represented by $\Sigma_{(k,k)}$ is $$I_k - {\norm {(u^{(1)})_{(k)}}}^2 P_{(u^{(1)})_{(k)}} - \ldots -  {\norm {(u^{(\gamma)})_{(k)}}}^2 P_{(u^{(\gamma)})_{(k)}},$$ which completes the proof.
 \end{proof}
 
 \begin{proposition}\label{basic}
    If $u^{(1)}, \ldots, u^{(\gamma)}$ are orthonormal vectors in $\mathbb{R}^m$ and $f\colon \mathbb{R}^k \rightarrow \mathbb{R}$ is a bounded measurable function, then
    \begin{align}\label{R^m limit}
         \lim_{n \rightarrow \infty} \int_{S_{n, u^{(1)}, \ldots, u^{(\gamma)}}} f(x_1, \ldots, x_k) d{\bar{\sigma}}(x_1, \ldots, x_n) 
     &= \int_{{\mathbb{R}}^k} f d{{\mu}_{0; u^{(1)}, \ldots, u^{(\gamma)}}}.
    \end{align}
    \end{proposition}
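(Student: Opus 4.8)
The plan is to reduce everything to Lemma~\ref{result for eventually zero vectors} by viewing $f$ as a function on $\mathbb{R}^m$ and then invoking Lemma~\ref{first claim} to descend back to $\mathbb{R}^k$. Since the $u^{(i)}$ are orthonormal in $\mathbb{R}^m$ we have $\gamma \le m$; moreover we may assume $m > k$, because if $m \le k$ then $\mathbb{R}^m \subseteq \mathbb{R}^k$ and \eqref{R^m limit} is immediate from Lemma~\ref{result for eventually zero vectors} applied directly in $\mathbb{R}^k$.

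First I would observe that for $n \ge m$ the slice $S_{n, u^{(1)}, \ldots, u^{(\gamma)}}$ depends only on the truncations $(u^{(i)})_{(n)}$, and that if $\tilde f\colon \mathbb{R}^m \to \mathbb{R}$ denotes the canonical extension of $f$ (so $\tilde f(x_1, \ldots, x_m) = f(x_1, \ldots, x_k)$), then tautologically
\[
\int_{S_{n, u^{(1)}, \ldots, u^{(\gamma)}}} f(x_1, \ldots, x_k)\, d\bar\sigma(x_1, \ldots, x_n) = \int_{S_{n, u^{(1)}, \ldots, u^{(\gamma)}}} \tilde f(x_1, \ldots, x_m)\, d\bar\sigma(x_1, \ldots, x_n).
\]
Since $\tilde f$ is bounded and measurable, Lemma~\ref{result for eventually zero vectors} (applied with $k$ replaced by $m$, with $f$ replaced by $\tilde f$, and with the same orthonormal family $u^{(1)}, \ldots, u^{(\gamma)} \in \mathbb{R}^m$) gives that the left-hand side of \eqref{R^m limit} equals $\int_{\mathbb{R}^m} \tilde f\, d\mu'$, where $\mu'$ is the Gaussian measure on $\mathbb{R}^m$ with mean $\mathbf{0}$ and covariance $I_m - \sum_{i=1}^{\gamma} P_{u^{(i)}}$; here I would note that $(u^{(i)})_{(m)} = u^{(i)}$, so this covariance is exactly the matrix \eqref{covariance} in dimension $m$, matching the hypotheses of Lemma~\ref{first claim}.

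Finally I would apply Lemma~\ref{first claim} to conclude $\int_{\mathbb{R}^m} \tilde f\, d\mu' = \int_{\mathbb{R}^k} f\, d\mu_{0; u^{(1)}, \ldots, u^{(\gamma)}}$, which is the right-hand side of \eqref{R^m limit}. I do not expect a genuine obstacle here: the proposition is the composition of Lemma~\ref{result for eventually zero vectors} (the spherical-integral limit when the defining vectors already lie in the domain of the integrand) with Lemma~\ref{first claim} (reconciling the ambient dimension $m$ with the smaller dimension $k$). The only point that needs a moment of care is checking that replacing $f$ on $\mathbb{R}^k$ by its trivial extension to $\mathbb{R}^m$ alters neither side of the asserted identity, which is immediate from the definitions of the canonical extension and of integration of a finite-dimensional function over a sphere slice.
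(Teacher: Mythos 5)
Your proposal is correct and follows essentially the same route as the paper's proof: handle $m \le k$ directly via Lemma~\ref{result for eventually zero vectors}, and for $m > k$ apply Lemma~\ref{result for eventually zero vectors} with $k$ replaced by $m$ (viewing $f$ as a function on $\mathbb{R}^m$) to express the limit as $\int_{\mathbb{R}^m} f\,d\mu'$, then descend to $\mathbb{R}^k$ via Lemma~\ref{first claim}. The only difference is that you spell out the tautological passage to the canonical extension $\tilde f$, which the paper leaves implicit.
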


\begin{proof}
 In the case when $m \leq k$, this follows from Lemma \ref{result for eventually zero vectors}. Now suppose $m > k$. By Lemma \ref{result for eventually zero vectors}, the limit on the left side of (\ref{R^m limit}) is equal to $$\int_{{\mathbb{R}}^{m}} f(x_1, \ldots, x_{m}) d{\mu'},$$ 
 where $\mu'$ is the Gaussian measure on $\mathbb{R}^m$ with mean $0$ and covariance  $I_m - \sum_{i = 1}^{\gamma} P_{u^{(i)}}$. The proof is now completed by Lemma \ref{first claim}.
 \end{proof}
\end{section}

\begin{section}{A hyperfinite approximation and integrating on any great circle}
Throughout this section $N > \mathbb{N}$ will be a hyperfinite number. The goal of this section is to generalize Theorem \ref{Peterson-Sungupta} to the case when the function $f\colon \mathbb{R}^k \rightarrow \mathbb{R}$ is continuous with compact support (we will henceforth write $f \in C_c(\mathbb{R}^k)$), while the $p_i$ are zero (i.e., $S_{A_N}$ is a great circle on $S^{N-1}(\sqrt{N})$), with no restriction on the orthonormal vectors $u^{(1)}, \ldots, u^{(\gamma)} \in \ell^2(\mathbb{R})$. The key steps are as follows:
\begin{enumerate}[(i)]
    \item \label{idea 1} Show that the ${^*}$-integral of ${^*}f$ does not change much when $S^{N-1}(\sqrt{N})$ is intersected by two different internal hyperplanes that are infinitesimally close enough to each other. 
    \item\label{idea 2} In view of \ref{idea 1} and certain continuity properties of the Gaussian integral (with varying covariance), prove some continuity results that show that the integrals in Theorem \ref{Peterson-Sungupta} do not change much when we work with two different hyperfinite truncations of the $u^{(i)}$.
    \item \label{idea 3} Use overflow together with the results of Section 2 to get an approximation result when the vectors $u^{(i)}$ are zero after a small but hyperfinite index $M$. Then, use \ref{idea 2} to complete the proof.
\end{enumerate}

These three ideas will be pursued in the next three subsections respectively. 

\subsection{Effect of an infinitesimal rotation on the ${^*}$-integral of a bounded uniformly continuous function}
\begin{proposition}\label{finite almost everywhere}
Let $N > \mathbb{N}$ and $R \in {^*}\mathbb{R}_{>0}$ be such that $\frac{R}{\sqrt{N}} \in {^*}\mathbb{R}_{\text{fin}}$. Then, almost all points of $S^{N-1}(R)$ have finite coordinates in any given direction, i.e., $$L\bar{\sigma}(\{(x_1, \ldots, x_N) \in S^{N-1}(R): x_i \in {^*}\mathbb{R}_{\text{fin}}\}) = 1 \text{ for all } i \leq N.$$

As a consequence, for any $v^{(1)}, \ldots, v^{(\gamma)} \in {^*}\mathbb{R}^N$, almost all points on the sphere ${S^{N-1}(R) \cap {v^{(1)}}^{\perp} \cap \ldots \cap {v^{(\gamma)}}^{\perp}}$ have finite coordinates along a given direction (unit) vector $w \in {^*}\mathbb{R}^N$. That is, given a unit vector $w \in {^*}\mathbb{R}^N$, almost all points $x$ on this sphere have $\inp*{x}{w} \in {^*}\mathbb{R}_{\text{fin}}$.
\end{proposition}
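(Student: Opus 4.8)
The plan is to reduce both assertions to a single elementary second-moment estimate on a sphere, and then transfer it to the hyperfinite setting.

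For the first assertion I would use the standard identity $\int_{S^{n-1}(a)} x_i^2 \, d\bar\sigma = a^2/n$, valid for every $n \in \mathbb{N}$, every $a \in \mathbb{R}_{>0}$ and every $i \le n$ (it follows at once from the rotational invariance of $\bar\sigma$ together with $\sum_{j \le n} x_j^2 \equiv a^2$). Applying Markov's inequality to $x_i^2$ yields $\bar\sigma(\{x \in S^{n-1}(a) : |x_i| > K\}) \le a^2/(nK^2)$ for all $K > 0$; this is a first-order statement, so by transfer it holds with $(n, a)$ replaced by $(N, R)$ and $K$ any standard positive real. Since $R/\sqrt N \in {^*}\mathbb{R}_{\text{fin}}$, the number $c \defeq \st(R^2/N)$ is a finite real, and for each standard $K$ the set $E_K^i \defeq \{x \in S^{N-1}(R): |x_i| > K\}$ is internal, so $L\bar\sigma(E_K^i) = \st(\bar\sigma(E_K^i)) \le c/K^2$. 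A point $x$ has $x_i \notin {^*}\mathbb{R}_{\text{fin}}$ precisely when $x \in \bigcap_{K \in \mathbb{N}} E_K^i$, a Loeb-measurable set whose measure is at most $c/K^2$ for every standard $K$, hence $0$; therefore $L\bar\sigma(\{x : x_i \in {^*}\mathbb{R}_{\text{fin}}\}) = 1$.

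For the second assertion I would set $V \defeq \operatorname{span}_{{^*}\mathbb{R}}(v^{(1)}, \ldots, v^{(\gamma)}) \subseteq {^*}\mathbb{R}^N$ and observe that $\Sigma \defeq S^{N-1}(R) \cap (v^{(1)})^{\perp} \cap \cdots \cap (v^{(\gamma)})^{\perp}$ is exactly the sphere of radius $R$ inside the Euclidean space $V^{\perp}$. Writing $d$ for the internal dimension of $V$ (so $d \le \gamma$) and $n' \defeq N - d$, the space $V^{\perp}$ admits an internal orthonormal basis (transfer of a standard linear-algebra fact), and the resulting coordinate isometry $V^{\perp} \to {^*}\mathbb{R}^{n'}$ carries $(\Sigma, \bar\sigma)$ onto $({^*}S^{n'-1}(R), \bar\sigma)$, just as the map $T$ did in the proof of Lemma \ref{result for eventually zero vectors}. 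Crucially, since $\gamma$ is a fixed standard integer, $n' \ge N - \gamma$ is still hyperfinite, and $R/\sqrt{n'} = (R/\sqrt N)\sqrt{N/n'}$ is still finite because $N/n' \le N/(N-\gamma) \approx 1$. Thus $\Sigma$ is a hyperfinite-dimensional sphere of the type handled by the first assertion.

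To conclude, given a unit vector $w \in {^*}\mathbb{R}^N$, decompose $w = w' + w''$ with $w' \in V^{\perp}$ and $w'' \in V$; then $\inp{x}{w} = \inp{x}{w'}$ for every $x \in V^{\perp}$ and $\norm{w'} \le \norm{w} = 1$. Transferring the standard identity $\int_{S^{m-1}(a)} \inp{x}{\xi}^2 \, d\bar\sigma = \norm{\xi}^2 a^2/m$ (valid for any $\xi$, by rotational invariance and scaling) and using the isomorphism above, I obtain $\int_{\Sigma} \inp{x}{w}^2 \, d\bar\sigma = \norm{w'}^2 R^2/n' \le R^2/n'$, which is finite by the previous paragraph. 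The same Markov-plus-standard-part argument as before then gives $L\bar\sigma(\{x \in \Sigma : |\inp{x}{w}| > K\}) \le \st(R^2/n')/K^2$ for every standard $K$, whence $L\bar\sigma(\{x \in \Sigma : \inp{x}{w} \notin {^*}\mathbb{R}_{\text{fin}}\}) = 0$. (Alternatively, one could extend $w'/\norm{w'}$ to an internal orthonormal basis of $V^{\perp}$ and simply quote the first assertion for the corresponding coordinate.) The proposition is not deep; the only step that needs care is recognizing that intersecting $S^{N-1}(R)$ with finitely many internal hyperplanes again produces a sphere whose dimension $N - d$ remains hyperfinite — this uses that $\gamma$ is standard — and whose ratio $R/\sqrt{\dim}$ remains finite, so that the hypotheses of the first assertion are preserved, together with projecting $w$ into $V^{\perp}$ to reduce to a coordinate direction. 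Everything else is transfer of elementary spherical integrals plus the fact that the Loeb measure of an internal set is the standard part of its ${^*}$-measure.
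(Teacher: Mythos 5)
Your proof is correct. For the second assertion your argument is essentially the one in the paper: decompose $w = w' + w''$ with $w'$ the component in $V^{\perp}$ (the paper writes the two components in the opposite order but the idea is identical), observe that $\inp{x}{w} = \inp{x}{w'}$ on $\Sigma \subseteq V^{\perp}$, use an internal orthonormal basis of $V^{\perp}$ to exhibit $\Sigma$ as a sphere $S^{n'-1}(R)$ with $n' = N - d$ still hyperfinite and $R/\sqrt{n'}$ still finite, and then invoke the first assertion in a coordinate direction.

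Where you diverge is in the first assertion. The paper simply cites an earlier result (\cite[Corollary~4.3]{Alam}) for the special case $R = \sqrt{N}$ and then handles general $R$ by transferring the scaling property $\bar{\sigma}_{S^{N-1}(\sqrt{N})}(B) = \bar{\sigma}_{S^{N-1}(R)}\bigl(\tfrac{R}{\sqrt{N}}B\bigr)$. You instead give a short, self-contained derivation: transfer the identity $\int_{S^{n-1}(a)} x_i^2\, d\bar{\sigma} = a^2/n$ together with Markov's inequality to get ${^*}\bar{\sigma}(\{|x_i| > K\}) \le R^2/(NK^2)$, take standard parts for each standard $K$, and exhaust $\{x_i \notin {^*}\mathbb{R}_{\mathrm{fin}}\} = \bigcap_{K\in\mathbb{N}} E_K^i$ by these internal sets. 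This has the merit of replacing an external citation with an elementary second-moment argument that also absorbs the scaling step for free (since it works for any $R$ with $R/\sqrt{N}$ finite directly), at the mild cost of a few extra lines. Your additional observation that one can also run the moment estimate directly for $\inp{x}{w}$ on $\Sigma$ — bypassing the explicit change of orthonormal basis — is a clean alternative to the paper's change-of-coordinates step, though both amount to the same rotational-invariance fact.
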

\begin{proof}
The first half of the proposition was proved in \cite[Corollary 4.3]{Alam} for the case $R = \sqrt{N}$ (i.e. for the sphere $S^{N-1}(\sqrt{N})$). The result for $S^{N-1}(R)$ in general then follows from the (transfer of) scaling property of uniform surface area measures. Indeed, for any $B \in {^*}\mathcal{B}(S^{N-1}(\sqrt{N}))$, we have: 
$$\bar{\sigma}_{S^{N-1}(\sqrt{N})}(B) = \bar{\sigma}_{S^{N-1}(R)}\left( \frac{R}{\sqrt{N}}B \right).$$

Now, let 
$$S' \defeq {S^{N-1}(R) \cap {v^{(1)}}^{\perp} \cap \ldots \cap {v^{(\gamma)}}^{\perp}}.$$

Also, let $L$ be the internal span  $\left({^*}\mathbb{R}v^{(1)} + \ldots + {^*}\mathbb{R}v^{(\gamma)} \right)$ of $v^{(1)}, \ldots, v^{(\gamma)}$ in ${^*}\mathbb{R}^N$. Consider an arbitrary unit vector $w \in {^*}\mathbb{R}^N$. We want to show that almost all points on $S'$ have finite coordinates along $w$, i.e., $\inp*{x}{w} \in {^*}\mathbb{R}_{\text{fin}}$ for $L\bar{\sigma}_{S'}$-almost all $x \in S'$. Let $w'$ and $w''$ be the orthogonal projections of $w$ onto $L$ and its orthogonal complement $L^{\perp}$ (in ${^*}\mathbb{R}^N$) respectively. Since $S' \subseteq L^{\perp}$, we have:
\begin{align*}
    \inp*{x}{w} = \inp*{x}{w'} + \inp*{x}{w''} = \inp*{x}{w''} \text{ for all } x \in S'.
\end{align*}
If $w'' = 0$, then clearly all points of $S$ have the coordinate $0$ along $w$. Otherwise, if $w''$ is not zero, then define $$w^{(1)} \defeq \frac{w''}{\norm{w''}}.$$

Let $c = {^*}\dim(L)$. It is clear that $c \leq \gamma$. Extend $w^{(1)}$ to an orthonormal basis $\{w^{(1)}, \ldots, w^{(N - c)}\}$ of $L^{\perp} = {^*}\mathbb{R}^N \cap {v^{(1)}}^{\perp} \cap \ldots \cap {v^{(\gamma)}}^{\perp}$. Consider the map $\phi\colon {^*}\mathbb{R}^N \cap {v^{(1)}}^{\perp} \cap \ldots \cap {v^{(\gamma)}}^{\perp} \rightarrow {^*}\mathbb{R}^{N-c}$ defined by $$\phi(w^{(i)}) = \mathbf{e}_i \text{ for all } i \in [N-c].$$ 

The map $\phi$ restricted to $S'$ is a measure isomorphism onto $S^{N-1 - c}(R)$. The first half of the proposition (applied to $S^{N-1 - c}(R)$) now completes the proof.
\end{proof}

In the following, we use the concept of Separation Property (SP) defined in Appendix B; see \eqref{defining separation property}. Roughly speaking, a set of vectors satisfy SP if they are linearly independent in a non-infinitesimal way (this is made precise in Appendix B). The hypothesis of Theorem \ref{Rotation Claim} is the same as that of Theorem \ref{Rotation Claim duplicate} (with ${^*}\mathbb{R}^N$ as the ambient internal vector space). By an \textit{infinitesimal vector} in ${^*}\mathbb{R}^N$ (where $N \in {^*}\mathbb{N}$), we mean an element $x \in {^*}\mathbb{R}^N$ such that $\norm{x} \approx 0$.

\begin{theorem}\label{Rotation Claim} Fix $N > \mathbb{N}$. For each $i \in \{1, \ldots, \gamma\}$, let $v^{(i)}, v'^{(i)} \in ({^*}\mathbb{R})^N$ be such that the following conditions hold: 
\begin{enumerate}[(i)]
    \item The collections $\{v^{(1)}, \ldots, v^{(\gamma)}\}$ and $\{v'^{(1)}, \ldots, v'^{(\gamma)}\}$ both satisfy the Separation Property (see \eqref{defining separation property}).  
      \item $\norm{v^{(i)}}, \norm{v^{(i)}} \in {^*}\mathbb{R}_{\text{fin}}.$
    \item $\norm{v^{(i)} - v'^{(i)}} \approx 0.$
\end{enumerate}

Then for any bounded and uniformly continuous $f \colon \mathbb{R}^k \rightarrow \mathbb{R}$, we have:. 
\begin{align*}
\int\limits_{S^{(1)}} \st({^*}f(x)) dL{\bar{\sigma}}(x) = \int\limits_{S^{(2)}} \st({^*}f(x)) dL{\bar{\sigma}}(x),
\end{align*}
where $S^{(1)} = S^{N-1}(\sqrt{N}) \cap {v^{(1)}}^{\perp} \cap \ldots \cap {v^{(\gamma)}}^{\perp}$ and $S^{(2)} = S^{N-1}(\sqrt{N}) \cap {v'^{(1)}}^{\perp} \cap \ldots \cap {v'^{(\gamma)}}^{\perp}$.
\end{theorem}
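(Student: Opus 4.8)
The plan is to reduce the two spheres $S^{(1)}$ and $S^{(2)}$ to a common sphere by means of an internal orthogonal transformation that is infinitesimally close to the identity, and then to transfer the invariance of the $^*$-surface area under orthogonal maps together with the uniform continuity of $f$ to conclude that the two Loeb integrals agree. First I would use the Separation Property hypothesis: since $\{v^{(1)},\ldots,v^{(\gamma)}\}$ is ``non-infinitesimally independent,'' the Gram--Schmidt process applied internally produces orthonormal vectors $w^{(1)},\ldots,w^{(\gamma)}$ with the same internal span $L$ as the $v^{(i)}$, and the analogous process applied to $\{v'^{(1)},\ldots,v'^{(\gamma)}\}$ produces orthonormal $w'^{(1)},\ldots,w'^{(\gamma)}$ spanning $L'$. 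A crucial quantitative point is that because the $v^{(i)}$ satisfy SP and are infinitesimally close to the $v'^{(i)}$ (which also satisfy SP), the Gram--Schmidt coefficients are finite and differ infinitesimally, so $\norm{w^{(i)} - w'^{(i)}} \approx 0$ for each $i$; this is where Appendix B's quantitative independence is used in an essential way (without SP, Gram--Schmidt could blow up a tiny perturbation).

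Next I would build an internal orthogonal transformation $\Theta \in {^*}O(N)$ carrying $L^\perp$ onto $L'^\perp$, with $\norm{\Theta - I}$ infinitesimal. Concretely, extend $\{w^{(i)}\}$ and $\{w'^{(i)}\}$ to internal orthonormal bases of ${^*}\mathbb{R}^N$ and define $\Theta$ to send one basis to the other in a way that acts as ``rotate $w^{(i)}$ to $w'^{(i)}$ through an infinitesimal angle'' and the identity (up to infinitesimals) on a common complement; a clean way to do this is to take $\Theta$ to be the product of the $\gamma$ Householder-type or planar rotations each of which moves only a $2$-dimensional subspace and through an infinitesimal angle, so the operator norm estimate $\norm{\Theta - I}\approx 0$ follows by transfer of the standard bound. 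Then $\Theta$ restricts to an internal isometry $S^{(1)} = S^{N-1}(\sqrt N)\cap L^\perp \to S^{N-1}(\sqrt N)\cap L'^\perp = S^{(2)}$, and by the transfer of the orthogonal-invariance of $\bar\sigma$ this restriction is a $^*$-measure isomorphism, hence a Loeb measure isomorphism after taking standard parts. Therefore
\begin{align*}
\int\limits_{S^{(1)}} \st({^*}f(x))\, dL\bar\sigma(x) = \int\limits_{S^{(2)}} \st({^*}f(\Theta^{-1} y))\, dL\bar\sigma(y).
\end{align*}

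It then remains to show that the right-hand side equals $\int_{S^{(2)}} \st({^*}f(y))\, dL\bar\sigma(y)$, i.e. that replacing ${^*}f(\Theta^{-1}y)$ by ${^*}f(y)$ does not change the Loeb integral. Here I would use: first, that $f$ is bounded, so both integrands are bounded by $\norm{f}_\infty$ and the integrals are finite; second, that $f$ is uniformly continuous on $\mathbb{R}^k$, which by transfer gives ${^*}f(a)\approx {^*}f(b)$ whenever $a\approx b$ in ${^*}\mathbb{R}^k$ with $a,b$ finite; and third — the genuinely delicate input — that for $L\bar\sigma$-almost all $y\in S^{(2)}$ the first $k$ coordinates of $y$ are finite, which is exactly the content of Proposition \ref{finite almost everywhere} applied with $R=\sqrt N$. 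For such $y$ we have $\pi_{(k)}(\Theta^{-1}y)\approx \pi_{(k)}(y)$ because $\norm{\Theta^{-1}-I}\approx 0$ and $\norm{y}=\sqrt N$ with $\pi_{(k)}(y)$ finite, so $\st({^*}f(\Theta^{-1}y)) = \st({^*}f(y))$ pointwise almost everywhere, and the two Loeb integrals coincide. (One should be slightly careful: $\norm{\Theta^{-1}y - y}$ need not be infinitesimal since $\norm y$ is infinite, but only the first $k$ coordinates of the displacement matter, and those are controlled by $\pi_{(k)}$ composed with $\Theta^{-1}-I$ acting on a vector whose relevant projection is finite — this is the step to write out with care.) Chaining the two equalities completes the proof.

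The main obstacle I anticipate is the control of the Gram--Schmidt / rotation construction: ensuring that the infinitesimal closeness $\norm{v^{(i)}-v'^{(i)}}\approx 0$ propagates to an \emph{operator-norm} infinitesimal bound $\norm{\Theta - I}\approx 0$ for the resulting orthogonal transformation. This is precisely where the Separation Property cannot be dispensed with, and it is likely the reason Appendix B is needed; handling the degenerate-looking bookkeeping (choice of complementary orthonormal bases, making the rotation genuinely internal, and keeping all the estimates uniform) is the technical heart of the argument, whereas the measure-isomorphism and uniform-continuity steps are comparatively routine given Proposition \ref{finite almost everywhere}.
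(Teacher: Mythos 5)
Your overall plan is the same as the paper's: use the Separation Property to run a stable Gram--Schmidt and produce nearby orthonormal bases, package the difference into an internal orthogonal transformation, transfer orthogonal invariance of $\bar\sigma$ to reduce to a single sphere, and then use uniform continuity of $f$ together with almost-everywhere finiteness (Proposition~\ref{finite almost everywhere}). However, the step you yourself flag as ``the step to write out with care'' does not actually close, and the way you invoke Proposition~\ref{finite almost everywhere} is aimed at the wrong directions.

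The difficulty is exactly the one you noticed: even if $\norm{\Theta^{-1}-I}\approx 0$ in operator norm, $\norm{y}=\sqrt N$ is infinite, so $\norm{\Theta^{-1}-I}\cdot\norm{y}$ is $\epsilon\sqrt N$ and need not be infinitesimal; applying $\pi_{(k)}$ does not help, because $\pi_{(k)}\circ(\Theta^{-1}-I)$ can still have operator norm $\epsilon$ and carry a huge tail coordinate of $y$ into the first $k$ slots (think of an off-diagonal block sending $e_N$ to $\epsilon e_1$). Knowing that $\pi_{(k)}(y)$ is finite is therefore irrelevant: the displacement $(\Theta^{-1}-I)y$ depends on the coordinates of $y$ in the subspace where $\Theta^{-1}-I$ acts, not on its first $k$ coordinates. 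What is needed, and what the paper actually proves (Claim~\ref{claim infinitesimal rotation}), is that for almost all $x\in S^{(1)}$ the \emph{entire} displacement $\norm{x-R(x)}$ is infinitesimal. The paper's observation is that in the expansion of $x-R(x)$ in the $w$-basis, the first $\gamma$ coefficients vanish because $x\perp\mathrm{span}(v^{(1)},\dots,v^{(\gamma)})$, and the middle $N-2\gamma$ coefficients contribute nothing because the two bases agree there (they were chosen inside $H_1\cap H_2$), so only $\gamma$ terms remain; then Proposition~\ref{finite almost everywhere} is applied to the directions $w^{(N-\gamma+1)},\dots,w^{(N)}$ (not to $e_1,\dots,e_k$), giving that those $\gamma$ coordinates are finite a.e., whence $\norm{x-R(x)}\leq (\text{finite})\cdot\sum_{i}\norm{w^{(i)}-z^{(i)}}\approx 0$. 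So the a.e.\ finiteness you need is along the rotating directions, and the finitely-many-terms structure (standard number $\gamma$, not hyperfinite $N$) of the displacement is essential. Your appeal to an operator-norm bound plus ``first $k$ coordinates finite'' does not substitute for this and, as written, leaves the proof incomplete.

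Two smaller remarks. First, the claim that $\Theta$ can be taken to be a product of $\gamma$ planar rotations with operator norm $\approx 0$ can be made to work (since only a standard-finite-dimensional subspace is moved), but the paper does not bother to establish an operator-norm bound precisely because, as above, it is not the quantity that controls the estimate on the sphere of radius $\sqrt N$. Second, your derivation of the nearby orthonormal bases from SP is consistent with the paper's Theorem~\ref{Rotation Claim duplicate} in Appendix~B, so that part of the outline is fine; the gap is only in the final a.e.\ pointwise estimate.
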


\begin{proof}
The idea is to first show that $S^{(1)}$ and $S^{(2)}$ are spheres of the same topologial dimension that are infinitesimally apart (to be made precise below). See also Figure \ref{fig:S^1_and_S^2}. Note that the hypotheses on the sets $\{v^{(1)}, \ldots, v^{(\gamma)}\}$ and $\{v'^{(1)}, \ldots, v'^{(\gamma)}\}$ are the same as in Theorem \ref{Rotation Claim duplicate}. Thus, using Theorem \ref{Rotation Claim duplicate}, we obtain orthonormal sets of vectors $\{w^{(1)}, \ldots, w^{(\gamma)}\}$ and $\{z^{(1)}, \ldots, z^{(\gamma)}\}$ such that the following hold:
\begin{enumerate}
    \item \label{pre SP1} For any $i \in  \{1, \ldots, \gamma\}$, we have $\text{span}(v^{(1)}, \ldots, v^{(i)}) = \text{span}(w^{(1)}, \ldots, w^{(i)})$ and $\text{span}(v'^{(1)}, \ldots, v'^{(i)}) = \text{span}(z^{(1)}, \ldots, z^{(i)})$.
    
    \item\label{pre SP2} For all $i \in  \{1, \ldots, \gamma\}$, we have $\norm{w^{(i)} - z^{(i)}} \approx 0$.
\end{enumerate}
Define the internal subspaces $H_1 \defeq \{x \in {^*}\mathbb{R}^N: \langle x, w^{(i)} \rangle = 0 \text{ for all } i \in  \{1, \ldots, \gamma\}\}$ and $H_2 \defeq \{x \in {^*}\mathbb{R}^N: \langle x, z^{(i)} \rangle = 0 \text{ for all } i \in  \{1, \ldots, \gamma\}\}$. Therefore, 
\begin{align*}
S^{(1)} &= {S^{N-1}(\sqrt{N}) \cap {v^{(1)}}^{\perp} \cap \ldots \cap {v^{(\gamma)}}^{\perp}} = S^{N-1}(\sqrt{N}) \cap H_1, \text{ and }\\ 
S^{(2)} &= {S^{N-1}(\sqrt{N}) \cap {v'^{(1)}}^{\perp} \cap \ldots \cap {v'^{(\gamma)}}^{\perp}} = S^{N-1}(\sqrt{N}) \cap H_2. 
\end{align*}

Note that $\text{dim}(H_1 \cap H_2) \geq N - 2\gamma$. Obtain an internally orthonormal set $\{c_1, \ldots, c_{N - 2\gamma}\}$ in $H_1 \cap H_2$. For $i \in  \{1, \ldots, N - 2\gamma\}$, define $w^{(\gamma + i)} = z^{(\gamma + i)} = c_i$. We thus have internally orthonormal sets $\{w^{(1)}, \ldots w^{(N - \gamma)}\}$ and $\{z^{(1)}, \ldots z^{(N - \gamma)}\}$ such that $\norm{w^{(i)} - z^{(i)}} \approx 0$ for all $i \in \{1, \ldots, N - \gamma\}$. Now extend to an internal orthonormal basis $\{w^{(1)}, \ldots, w^{(N - \gamma)}, \ldots, w^{(N)}\}$ of ${^*}\mathbb{R}^N$, and inductively define the following for $i \in  \{1, \ldots, \gamma\}$:
\begin{align*}
   z^{(N - \gamma + 1)} &\defeq \frac{w^{(N - \gamma + 1)} - \sum_{j = 1}^{\gamma}\inp{z^{(j)}}{w^{(N - \gamma + 1)}}z^{(j)}}{\norm{w^{(N - \gamma + 1)} - \sum_{j = 1}^{\gamma}\inp{z^{(j)}}{w^{(N - \gamma + 1)}}z^{(j)}}}, \\
   z^{(N - \gamma + i + 1)} &\defeq \frac{z}{\norm{z}}, \text{ where } 
 \end{align*}
$$z = w^{(N - \gamma + i + 1)} - \sum_{j = 1}^{\gamma}\inp{z^{(j)}}{w^{(N - \gamma + i + 1)}}z^{(j)} - \sum_{l = 1}^{i}\inp{z^{(N - \gamma + l)}}{w^{(N - \gamma + i + 1)}}z^{(N - \gamma + l)}.$$
\begin{figure}
    \centering
    \tikzset{
    partial ellipse/.style args={#1:#2:#3}{
        insert path={+ (#1:#3) arc (#1:#2:#3)}
 }}

{\centering
\begin{tikzpicture} 
  [scale=.8,   rotate= 40]

\def\R{3} 
\def\angEl{40} 
\filldraw[ball color=white] (0,0) circle (\R);
\foreach \t in {-80,-40,...,80} { \DrawLatitudeCircle[\R]{\t} }

\coordinate [label=left:\textcolor{black}{${}$}](G) at (-6,.5);
\coordinate [label=left:\textcolor{black}{${}$}] (C) at (5,.5);
\coordinate (F) at (4,2.5);
\coordinate (H) at (-5 ,2.5);


\node at (2,-4) {$S^{N-1}(\sqrt{N})$};

\shadedraw [color=black, opacity = .15]  (-5,-1.5)--(5,-1.5)--(3.5,2.25)--(-3.5,2.25)--(-5,-1.5) node[above left,opacity=1] {$H_1$};
\shadedraw [color=red, opacity = .15]  (-5,-1.25)--(5,-1.25)--(3.5,2)--(-3.5,2)--(-5,-1.25)node[below right,opacity=1] {$H_2$};

\draw [dashed] (0,0) ellipse (3  and 1) node[pos=.5,yshift=.6cm,xshift=-.2cm] {$S^{(1)}$}; 
\draw[thick] (0,0) [partial ellipse=150:370: 3 and 1]; 

\draw [dashed,red] (0,0) ellipse (3  and 1.2) node[pos=.5,yshift=1.5cm,xshift=.2cm] {$S^{(2)}$};; 
\draw[thick,red] (0,0) [partial ellipse=150:370: 3 and 1.2]; 

\end{tikzpicture}
\par}
\label{F:slice_2}
    \caption{$S^{(1)}$ and $S^{(2)}$ are separated infinitesimally}
    \label{fig:S^1_and_S^2}
\end{figure}

It is straightforward to verify that $\{z^{(1)}, \ldots, z^{(N)}\}$ is also an internal orthonormal basis of ${^*}\mathbb{R}^N$ (orthonormality follows by construction and we then use the transfer of the standard fact about Euclidean spaces that says that all orthonormal sets containing as many elements as the dimension span the space). Furthermore, $\norm{w^{(i)} - z^{(i)}} \approx 0$ for all $i \in \{1, \ldots, N\}$ by construction.

Define a ${^*}\mathbb{R}$-linear map $R \colon {^*}\mathbb{R}^N \rightarrow {^*}\mathbb{R}^N$ by $R(w^{(i)}) = z^{(i)}$ for all $i \in \{1, \ldots, N\}$. Since $R$ takes an internal orthonormal basis to an internal orthonormal basis, it is an internal orthogonal map. Also, $R(S^{(1)}) = S^{(2)}$. By transfer, it follows that for any $A \in {^*}\mathcal{B}(S^{(1)})$, we have $\bar{\sigma}_{S^{(2)}}(R(A)) = \bar{\sigma}_{S^{(1)}}(A)$. Hence, it follows that $$L\bar{\sigma}_{S^{(2)}}(R(A)) = L\bar{\sigma}_{S^{(1)}}(A) \text{ for all } A \in {^*}\mathcal{B}(S^{(1)}).$$ 

By a change of variables argument, we conclude the following for any bounded measurable function $g\colon \mathbb{R}^k \rightarrow \mathbb{R}$:
\begin{align*}
    \int_{S^{(2)}} \st({^*}g(x)) dL{\bar{\sigma}}(x) = \int_{S^{(1)}} \st({^*}g(R(x))) dL{\bar{\sigma}}(x).
\end{align*}
Thus it suffices to prove that the following holds for all bounded and uniformly continuous functions $f \colon \mathbb{R}^k \rightarrow \mathbb{R}$. 
\begin{align} \label{requirement}
    \int_{S^{(1)}} \st ({^*}f(x)) dL{\bar{\sigma}}(x) = \int_{S^{(1)}} \st ({^*}f(R(x))) dL{\bar{\sigma}}(x) 
\end{align}

In order to show (\ref{requirement}), we first need the following claim.

\begin{claim}\label{claim infinitesimal rotation}
We have $\norm{x - R(x)} \approx 0$ for almost all $x \in S^{(1)}$.
\end{claim}
\begin{proof}[Proof of Claim \ref{claim infinitesimal rotation}]\phantom\qedhere
Note that $x = \sum_{i = 1}^{N} \inp{x}{w^{(i)}}w^{(i)}$ for any $x \in {^*}\mathbb{R}^N$. Since $w^{(i)} = z^{(i)}$ for all $i \in \{\gamma + 1, \ldots, N- \gamma\}$, the facts that $R(w^{(i)}) = z^{(i)}$ for all $i$ and that $\inp{x}{w^{(i)}} = 0$ for all $i \in \{1, \ldots, \gamma\}$ imply that
\begin{align*}
  \norm{x - R(x)} = \norm{\sum_{i = N - \gamma + 1}^{N} \inp{x}{w^{(i)}}(w^{(i)} - z^{(i)})} \text{ for all } x \in S^{(1)}.
\end{align*}

By Proposition \ref{finite almost everywhere}, it follows that for each $i > \gamma$, almost surely $\inp{x}{w^{(i)}} \in {^*}\mathbb{R}_{\text{fin}}$. Thus, being a maximum of finitely many elements of ${^*}\mathbb{R}_{\text{fin}}$, we have: $$\max\limits_{N - \gamma + 1 \leq i \leq N} \abs{\inp{x}{w^{(i)}}} = s(x) ~(\text{say}),$$ 
where $s(x) \in {^*}\mathbb{R}_{\text{fin}} \text{ for almost all } x \in S^{(1)}$. Hence, for almost all $x \in S^{(1)}$, we get:
\begin{align*}
   \norm{x - R(x)} \leq s(x) \cdot \sum_{i = N - \gamma + 1}^{N} \norm{w^{(i)} - z^{(i)}} \approx 0, \text{ as desired.}
\end{align*}
\end{proof}

Using Claim \ref{claim infinitesimal rotation}, we have $\norm{{^*}\pi_k(x) - {^*}\pi_k(R(x))} \approx 0$ for almost all $x \in S^{(1)}$. Thus, by the nonstandard characterization of uniform continuity, we have that $\st ({^*}f(x)) = \st({^*}f(R(x)))$ for almost all $x \in S^{(1)}$. This completes the proof.
\end{proof}

\subsection{Some integral continuity properties}
\begin{definition}\label{Covariance}
Let $\text{PSD}$ be the set of all positive-semidefinite $(k \times k)$-matrices with real entries. For a bounded measurable function $f\colon \mathbb{R}^k \rightarrow \mathbb{R}$, we define $G_f: \text{PSD} \rightarrow \mathbb{R}$ to be the function that maps $L$ to the expectation of (our fixed function) $f$ with respect to the Gaussian measure on $\mathbb{R}^k$ with mean $0$ and covariance $L$, i.e.,
\begin{align*}
    G_f (L) = \int_{\mathbb{R}^k} f d{\mu_{0, L}}.
\end{align*}
\end{definition}

Being a subset of the space of linear operators on $\mathbb{R}^k$, the space PSD inherits the metric induced by the operator norm. 
\begin{lemma}\label{Covariance continuity}
 With respect to the operator norm on $\text{PSD}$, the map $G_f$ is continuous for all bounded continuous $f \colon \mathbb{R}^k \rightarrow \mathbb{R}$. 
\end{lemma}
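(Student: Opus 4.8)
The plan is to fix a bounded continuous $f \colon \mathbb{R}^k \to \mathbb{R}$ and show that $L \mapsto G_f(L)$ is sequentially continuous on $\text{PSD}$ with respect to the operator norm, which suffices since $\text{PSD}$ is a metric space. So suppose $L_n \to L$ in operator norm, with all $L_n, L \in \text{PSD}$. I want to conclude $\int_{\mathbb{R}^k} f \, d\mu_{0,L_n} \to \int_{\mathbb{R}^k} f \, d\mu_{0,L}$. The natural mechanism is to realize all these Gaussian measures as pushforwards of a single fixed reference measure under the square-root maps and then invoke dominated (or bounded) convergence. Concretely, let $Z$ be a standard $k$-dimensional Gaussian random vector with law $\mu_{0, I_k}$ on some probability space. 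Since each $L_n$ is positive-semidefinite, it has a unique positive-semidefinite square root $L_n^{1/2}$, and similarly $L^{1/2}$; then $L_n^{1/2} Z$ has law $\mu_{0, L_n}$ and $L^{1/2} Z$ has law $\mu_{0, L}$. Hence
\begin{align*}
    G_f(L_n) = \mathbb{E}\!\left[ f\!\left( L_n^{1/2} Z \right) \right], \qquad G_f(L) = \mathbb{E}\!\left[ f\!\left( L^{1/2} Z \right) \right].
\end{align*}

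The key analytic input is that the matrix square root is continuous on $\text{PSD}$ with respect to the operator norm; this is a standard fact (it follows, e.g., from continuity of the functional calculus for self-adjoint matrices, or from a compactness/uniqueness argument). Granting this, $L_n^{1/2} \to L^{1/2}$ in operator norm, so for every fixed $\omega$ we have $L_n^{1/2} Z(\omega) \to L^{1/2} Z(\omega)$ in $\mathbb{R}^k$, and therefore $f\!\left( L_n^{1/2} Z(\omega) \right) \to f\!\left( L^{1/2} Z(\omega) \right)$ by continuity of $f$. Since $f$ is bounded, the constant $\norm{f}_\infty$ serves as an integrable dominating function on the probability space, so the bounded convergence theorem gives $\mathbb{E}\!\left[ f\!\left( L_n^{1/2} Z \right) \right] \to \mathbb{E}\!\left[ f\!\left( L^{1/2} Z \right) \right]$, i.e. $G_f(L_n) \to G_f(L)$. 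This proves continuity of $G_f$.

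The main obstacle — really the only non-routine point — is the continuity of the positive-semidefinite square root on $\text{PSD}$; everything else is bounded convergence plus the change-of-variables identity for Gaussians. I would either cite this as standard or sketch it: if $L_n \to L$ then $\{L_n^{1/2}\}$ is bounded (since $\norm{L_n^{1/2}}^2 = \norm{L_n}$ is bounded), so any subsequence has a convergent sub-subsequence $L_{n_j}^{1/2} \to M$; passing to the limit in $L_{n_j}^{1/2} L_{n_j}^{1/2} = L_{n_j}$ gives $M^2 = L$ with $M$ positive-semidefinite, and by uniqueness of the positive-semidefinite square root $M = L^{1/2}$; since every subsequence has a sub-subsequence converging to the same limit, the full sequence converges. (One could alternatively avoid square roots entirely by using characteristic functions: $\mu_{0,L_n} \to \mu_{0,L}$ weakly because $\widehat{\mu_{0,L_n}}(t) = e^{-\frac12 \inp{t}{L_n t}} \to e^{-\frac12 \inp{t}{L t}} = \widehat{\mu_{0,L}}(t)$ pointwise, by Lévy's continuity theorem — but then one still needs $f$ bounded continuous to pass from weak convergence to convergence of the integrals, which is exactly the Portmanteau characterization, so this route works equally well and in fact uses the boundedness and continuity of $f$ in precisely the place the lemma advertises it.)
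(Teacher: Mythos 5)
Your proof is correct, but it runs along a genuinely different track from the paper's. The paper reduces the claim to weak convergence $\mu_{0,L_n}\to\mu_{0,L}$ and verifies that by the Cram\'er--Wold device: it projects onto a fixed direction $\vec{x}\in\mathbb{R}^k$, observes that $\inp{\vec{x}}{Z_n}\sim N(0,\inp{\vec{x}}{L_n\vec{x}})$, and uses the fact that one-dimensional Gaussians converge in distribution iff their means and variances converge, the latter following from $L_n\vec{x}\to L\vec{x}$. Your primary argument instead realizes every $\mu_{0,L_n}$ as the pushforward of a single standard Gaussian under the square-root map $L_n^{1/2}$, establishes operator-norm continuity of $L\mapsto L^{1/2}$ on $\text{PSD}$ by a boundedness-plus-uniqueness subsequence argument, and then closes with bounded convergence; this is a coupling-style proof that upgrades the conclusion from distributional convergence to almost-sure convergence of the coupled random vectors, which makes the use of continuity and boundedness of $f$ especially transparent. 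The trade-off is that you need the nontrivial continuity of the PSD square root, which the paper sidesteps entirely by never forming square roots. Your parenthetical alternative via characteristic functions and L\'evy's continuity theorem is closer in spirit to the paper (both amount to proving weak convergence of Gaussians), but it is still a different technical reduction: the paper goes through finite-dimensional Cram\'er--Wold rather than the characteristic-function criterion. All three routes are valid; the paper's is arguably the most lightweight in prerequisites, while yours makes the role of the hypotheses on $f$ most explicit.
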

\begin{proof}
Let $L_n \rightarrow L$ in PSD. It suffices to prove that $\mu_{0, L_n} \rightarrow \mu_{0, L}$ weakly. Equivalently, we want to show that $Z_n \rightarrow Z$ in distribution, where $Z_n \sim N(0, L_n)$ and $Z \sim N(0, L)$. Since $Z_n$ and $Z$ are $\mathbb{R}^k$-valued Gaussian random variables, $Z_n \rightarrow Z$ in distribution if and only if for any $\vec{x} \in \mathbb{R}^k$, the Gaussian random variables $\inp*{\vec{x}}{Z_n}$ converge in distribution to $\inp*{\vec{x}}{Z}$ (see, for example, \cite[Corollary 4.5, p. 64]{Kallenberg}). Toward that end, fix $\vec{x} \in \mathbb{R}^k$. We know that $\inp*{\vec{x}}{Z_n} \sim N\left(0, \inp*{\vec{x}}{L_n\vec{x}}\right)$, while $\inp*{\vec{x}}{Z} \sim N\left(0, \inp*{\vec{x}}{L\vec{x}}\right)$. For real-valued Gaussian random variables, convergence in distribution is equivalent to the convergence of means and variances. The proof is thus completed by the observation that $L_n\vec{x} \rightarrow L\vec{x}$ (which follows from the fact that $L_n \rightarrow L$ in operator norm).
\end{proof}

\begin{definition}
    For an inner product space $V$ and $\gamma \in \mathbb{N}$, let $V^{[\gamma]}$ be the set of $\gamma$-tuples of orthonormal vectors from $V$.
\end{definition}

\begin{definition}\label{a and theta}
    For a bounded measurable function $f\colon \mathbb{R}^k \rightarrow \mathbb{R}$ and $m \in \mathbb{N}_{\geq k}$, let $\theta_{f,m}\colon (\mathbb{R}^m)^{[\gamma]} \rightarrow \mathbb{R}$ and $a_{f,m}\colon (\mathbb{R}^m)^{\gamma} \times \mathbb{N}_{\geq k} \rightarrow \mathbb{R}$ be defined by 
    \begin{align*}
        \theta_{f, m}(v^{(1)}, \ldots, v^{(\gamma)}) &\defeq \int_{{\mathbb{R}}^k} f d\mu_{0; v^{(1)}, \ldots, v^{(\gamma)}}, \\
        a_{f,m}(v^{(1)}, \ldots, v^{(\gamma)}, n) &\defeq \int_{S_{n, v^{(1)}, \ldots, v^{(\gamma)}}} f d{\bar{\sigma}}.
    \end{align*}
\end{definition}

Here, $S_{n, v^{(1)}, \ldots, v^{(\gamma)}}$ is equal to the intersection of $S^{n-1}(\sqrt{n})$ with $\cap_{i \leq \gamma} (v^{(i)})^\perp$. Note that $\theta_{f, m}$ is defined only on ${(\mathbb{R}^m)}^{[\gamma]}$ (instead of on $(\mathbb{R}^m)^{\gamma}$) since $\mu_{0; v^{(1)}, \ldots, v^{(\gamma)}}$ is well-defined for $(v^{(1)}, \ldots, v^{(\gamma)}) \in {(\mathbb{R}^m)}^{[\gamma]}$ by Lemma \ref{consistency} while it may not be defined in general for an arbitrary set of vectors in $\mathbb{R}^m$. On the other hand, $a_{f,m}(\cdot, n)$ is defined on all of $(\mathbb{R}^m)^{\gamma}$, though we will usually only be interested in the case when $v^{(1)}, \ldots, v^{(\gamma)}$ are truncations of orthonormal vectors in $\ell^2(\mathbb{R})$.

The space $(\mathbb{R}^m)^{[\gamma]}$ inherits a metric from $(\mathbb{R}^m)^{\gamma}$. The next two lemmas respectively prove that under the topology of that metric, the function $\theta_{f, m}$ is continuous and that $a_{f, m}(\cdot, N) \colon{^*}(\mathbb{R}^m)^{[\gamma]} \rightarrow {^*}\mathbb{R}$ is $S$-continuous (an internal function is called $S$-\textit{continuous} if it maps points that are infinitesimally close in the domain to points that are infinitesimally close in the range). 

\begin{lemma}\label{continuity in finite dimensions 1}
     For each bounded continuous function $f\colon \mathbb{R}^k \rightarrow \mathbb{R}$ and $m \in \mathbb{N}$, the map $\theta_{f,m}$ is continuous. In other words, if $(v^{(1)}, \ldots, v^{(\gamma)}) \in (\mathbb{R}^m)^{[\gamma]}$ and $(v'^{(1)}, \ldots, v'^{(\gamma)}) \in ({{^*}{{\mathbb{R}^m}}})^{[\gamma]}$ are such that $\norm{v^{(i)} - v'^{(i)}} \approx 0$ for each $i \in \{1, \ldots, \gamma\}$, then we have  $\theta_{f, m} (v^{(1)}, \ldots, v^{(\gamma)}) \approx {^*}\theta_{f, m}(v'^{(1)}, \ldots, v'^{(\gamma)})$.
\end{lemma}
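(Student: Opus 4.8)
The plan is to reduce the claim to Lemma~\ref{Covariance continuity} by exhibiting $\theta_{f,m}$ as the composition of $G_f$ with a simple (polynomial) covariance map. First I would record the elementary observation that for a vector $w$ in a Euclidean space, the operator $\norm{w}^2 P_w$ is just $x \mapsto \inp{x}{w}w$, so in coordinates it has $(p,q)$-entry $w_p w_q$; in particular this is well-defined even when $w = 0$. Consequently, writing $\bar v$ for a tuple $(v^{(1)}, \ldots, v^{(\gamma)})$, the covariance in \eqref{covariance} is
\[
    L(\bar v) \defeq I_k - \sum_{i=1}^{\gamma} \norm{(v^{(i)})_{(k)}}^2 P_{(v^{(i)})_{(k)}},
\]
whose entries are polynomials in the first $k$ coordinates of the $v^{(i)}$, and $\theta_{f,m}(\bar v) = G_f(L(\bar v))$ with $G_f$ as in Definition~\ref{Covariance}. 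By Lemma~\ref{consistency}, $L$ maps $(\mathbb{R}^m)^{[\gamma]}$ into $\text{PSD}$.

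Next I would argue that $L\colon (\mathbb{R}^m)^{[\gamma]} \to \text{PSD}$ is continuous in the operator norm. Since each entry of $L(\bar v)$ depends polynomially (hence continuously) on the coordinates of the $v^{(i)}$, and since on the finite-dimensional space of $(k\times k)$-matrices the operator norm is equivalent to the entrywise maximum norm, $L$ is continuous. Composing with the continuity of $G_f$ supplied by Lemma~\ref{Covariance continuity} gives that $\theta_{f,m} = G_f \circ L$ is continuous, which is the standard content of the lemma.

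To obtain the nonstandard reformulation, suppose $\bar v = (v^{(1)}, \ldots, v^{(\gamma)}) \in (\mathbb{R}^m)^{[\gamma]}$ and $\bar v' = (v'^{(1)}, \ldots, v'^{(\gamma)}) \in ({^*}\mathbb{R}^m)^{[\gamma]}$ with $\norm{v^{(i)} - v'^{(i)}} \approx 0$ for each $i$. Because the $v'^{(i)}$ are ${^*}$-orthonormal, their coordinates are bounded by $1$ in absolute value, so the $(p,q)$-entry $(v'^{(i)})_p(v'^{(i)})_q$ of $\norm{(v'^{(i)})_{(k)}}^2 P_{(v'^{(i)})_{(k)}}$ is a product of finite quantities differing infinitesimally from $(v^{(i)})_p(v^{(i)})_q$; summing over $i$ and over the finitely many entries gives $\norm{{^*}L(\bar v') - L(\bar v)}_{\text{op}} \approx 0$. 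By the transfer of Lemma~\ref{consistency}, ${^*}L(\bar v')$ is ${^*}$-positive-semidefinite with standard part the (standard) matrix $L(\bar v) \in \text{PSD}$, and since $f$ is bounded, ${^*}G_f$ is finite-valued. The nonstandard characterization of continuity of $G_f$ at the standard point $L(\bar v)$ (Lemma~\ref{Covariance continuity}) therefore yields ${^*}G_f({^*}L(\bar v')) \approx G_f(L(\bar v))$, i.e., ${^*}\theta_{f,m}(\bar v') \approx \theta_{f,m}(\bar v)$.

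I do not expect a genuine obstacle here: the analytic heart of the matter --- weak continuity of the Gaussian family with respect to its covariance --- has already been handled in Lemma~\ref{Covariance continuity}, and what remains is routine bookkeeping of the covariance map. The one point deserving a little care is checking that componentwise infinitesimal closeness of the $v^{(i)}$ propagates to operator-norm infinitesimal closeness of the covariance matrices; this is precisely where the orthonormality of the $v^{(i)}$ enters (keeping the relevant norms finite), so that Lemma~\ref{Covariance continuity} can be invoked at a standard point.
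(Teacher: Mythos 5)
Your proposal is correct and follows essentially the same route as the paper: factor $\theta_{f,m}$ as $G_f \circ L$ with $L$ the covariance map, invoke Lemma~\ref{Covariance continuity} for the continuity of $G_f$, and reduce to showing $\norm{L(\bar v) - {^*}L(\bar v')}_{\text{op}} \approx 0$, which the paper also does. You simply spell out the last step (which the paper calls ``straightforward'') by observing that the matrix entries $w_p w_q$ of $\norm{w}^2 P_w$ depend polynomially on the coordinates and that orthonormality keeps them finite.
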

\begin{proof}
The two statements in the lemma are equivalent by the nonstandard characterization of continuity. We will prove the latter statement. Toward that end, fix $(v^{(1)}, \ldots, v^{(\gamma)}) \in (\mathbb{R}^m)^{[\gamma]}$ and $(v'^{(1)}, \ldots, v'^{(\gamma)}) \in ({{^*}{{\mathbb{R}^m}}})^{[\gamma]}$ such that $$\norm{v^{(i)} - v'^{(i)}} \approx 0 \text{ for each } i \in \{1, \ldots, \gamma\}.$$ 

Let 
\begin{align*}
L &= I - {\norm {(v^{(1)})_{(k)}}}^2 P_{(v^{(1)})_{(k)}} - \ldots -  {\norm {(v^{(\gamma)})_{(k)}}}^2 P_{(v^{(\gamma)})_{(k)}}, \text{ and } \\ L' &= I - {\norm {(v'^{(1)})_{(k)}}}^2 P_{(v'^{(1)})_{(k)}} - \ldots -  {\norm {(v^{(\gamma)})_{(k)}}}^2 P_{(v'^{(\gamma)})_{(k)}}.
\end{align*}

By Definition $\ref{Covariance}$ and transfer, we have $$\theta_{f,m} (v^{(1)}, \ldots, v^{(\gamma)}) = G_f(L), \text{ and } {^*}\theta_{f,m}(v'^{(1)}, \ldots, v'^{(\gamma)}) = {^*}G_f(L').$$ 

By Lemma \ref{Covariance continuity}, the map $G_f$ is continuous. Hence, by nonstandard characterization of continuity, it suffices to show that $\norm{L - L'}_{op} \approx 0$. This is straightforward if one uses the representation of the projection operator as given by the inner product in the direction of the projection.
\end{proof}

\begin{lemma}\label{continuity in finite dimensions 2}
      Let $N > \mathbb{N}$. If $f\colon \mathbb{R}^k \rightarrow \mathbb{R}$ is bounded and uniformly continuous, then the map $a_{f, m}(\cdot, N) \colon{^*}(\mathbb{R}^m)^{[\gamma]} \rightarrow {^*}\mathbb{R}$ is $S$-continuous. 
       
       Equivalently, if $(v^{(1)}, \ldots, v^{(\gamma)}) \text{ and } (v'^{(1)}, \ldots, v'^{(\gamma)}) \in ({{^*}{{\mathbb{R}^m}}})^{[\gamma]}$ are such that $\norm{v^{(i)} - v'^{(i)}} \approx 0$ for each $i \in \{1, \ldots, \gamma\}$, then $${^*}a_{f,m}(v^{(1)}, \ldots, v^{(\gamma)}, N) \approx {^*}a_{f,m}(v'^{(1)}, \ldots, v'^{(\gamma)}, N) \text{ for all } N > \mathbb{N}.$$
\end{lemma}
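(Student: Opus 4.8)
The plan is to reduce the $S$-continuity of $a_{f,m}(\cdot,N)$ to the already-proved Theorem \ref{Rotation Claim}. First I would observe, using the remark following Definition \ref{a and theta} together with transfer, that for an orthonormal tuple $(v^{(1)},\ldots,v^{(\gamma)}) \in ({^*}\mathbb{R}^m)^{[\gamma]}$ (regarded as orthonormal vectors in ${^*}\mathbb{R}^N$ that vanish beyond coordinate $m$, with orthogonal complements taken in ${^*}\mathbb{R}^N$) one has
\[
    {^*}a_{f,m}(v^{(1)},\ldots,v^{(\gamma)},N) = \int_{S} {^*}f \, d\bar{\sigma}, \qquad S \defeq S^{N-1}(\sqrt{N}) \cap {v^{(1)}}^{\perp} \cap \ldots \cap {v^{(\gamma)}}^{\perp},
\]
the right-hand side being the internal ${^*}$-integral against the internal \emph{normalized} surface measure $\bar{\sigma}_S$, which is an internal probability measure. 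Since $f$ is bounded, transfer gives $\abs{{^*}f} \le \norm{f}_\infty$ everywhere, so ${^*}f$ is an internally bounded function on the internal probability space $(S, {^*}\mathcal{B}(S), \bar{\sigma}_S)$; hence it is $S$-integrable, and therefore $\st\big({^*}a_{f,m}(v^{(1)},\ldots,v^{(\gamma)},N)\big) = \int_{S} \st({^*}f(x)) \, dL\bar{\sigma}(x)$. This converts the desired infinitesimal comparison of the (finite) ${^*}$-integrals into a comparison of Loeb integrals of $\st({^*}f)$ over two great circles.

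Next, given tuples $(v^{(1)},\ldots,v^{(\gamma)})$ and $(v'^{(1)},\ldots,v'^{(\gamma)})$ in $({^*}\mathbb{R}^m)^{[\gamma]}$ with $\norm{v^{(i)} - v'^{(i)}} \approx 0$ for each $i$, I would verify the three hypotheses of Theorem \ref{Rotation Claim} with ambient internal space ${^*}\mathbb{R}^N$: the norms are all $1$, hence finite; the differences $\norm{v^{(i)} - v'^{(i)}}$ are infinitesimal by assumption; and each orthonormal tuple satisfies the Separation Property, essentially by definition, since the distance of $v^{(i)}$ to $\text{span}(v^{(1)},\ldots,v^{(i-1)})$ is exactly $1$ (here I would invoke \eqref{defining separation property} from Appendix B for the precise check). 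Theorem \ref{Rotation Claim}, which applies precisely because $f$ is bounded and uniformly continuous, then yields $\int_{S^{(1)}} \st({^*}f(x)) \, dL\bar{\sigma}(x) = \int_{S^{(2)}} \st({^*}f(x)) \, dL\bar{\sigma}(x)$, where $S^{(1)}$ and $S^{(2)}$ are the great circles cut out by the $v^{(i)}$ and $v'^{(i)}$ respectively. Combining this with the reduction of the first step applied to both tuples gives $\st\big({^*}a_{f,m}(v^{(1)},\ldots,v^{(\gamma)},N)\big) = \st\big({^*}a_{f,m}(v'^{(1)},\ldots,v'^{(\gamma)},N)\big)$; since both quantities are finite, this is exactly ${^*}a_{f,m}(v^{(1)},\ldots,v^{(\gamma)},N) \approx {^*}a_{f,m}(v'^{(1)},\ldots,v'^{(\gamma)},N)$, i.e. the asserted $S$-continuity.

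I do not anticipate a real obstacle: the lemma is essentially a repackaging of Theorem \ref{Rotation Claim}, with the $S$-integrability of bounded internal functions as the bridge between internal integrals and Loeb integrals. The only two points that genuinely need care are (a) confirming that an orthonormal set satisfies the Separation Property — this is the one place where I must open up the definition in Appendix B rather than treat it as a black box — and (b) applying the $S$-integrability fact to the correct internal probability space, namely the \emph{normalized} surface measure $\bar\sigma_S$ (not the un-normalized $\sigma_S$), so that the total internal mass is $1$. Both are routine, so the argument will be short.
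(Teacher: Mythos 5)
Your proposal is correct and takes essentially the same route as the paper, whose proof is just the one-line remark that the lemma ``is immediate from Theorem \ref{Rotation Claim} followed by applications of the transfer principle and the S-integrability of finitely bounded internal functions.'' You have simply unpacked that sentence, correctly identifying the two points that need a moment's thought: that orthonormal tuples satisfy the Separation Property (so Theorem \ref{Rotation Claim} applies), and that S-integrability against the normalized internal surface measure is what converts the comparison of ${^*}$-integrals into the comparison of Loeb integrals.
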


\begin{proof}
This is immediate from Theorem \ref{Rotation Claim} followed by applications of the transfer principle and the S-integrability of finitely bounded internal functions.      
\end{proof}

\subsection{A hyperfinite approximation via overflow}
\begin{theorem}\label{hyperplane}
With $u^{(1)}, \ldots, u^{(\gamma)}$ orthonormal in $\ell^2(\mathbb{R})$, for any bounded and uniformly continuous $f \colon \mathbb{R}^k \rightarrow \mathbb{R}$, we have
\begin{align*}
    \lim_{n \rightarrow \infty} \int_{S^{n-1}(\sqrt{n}) \cap {u^{(1)}}^{\perp} \cap \ldots \cap {u^{(\gamma)}}^{\perp}} f(x) d{{\bar{\sigma}}}(x) = \int_{\mathbb{R}^k} f(x) d\mu_{0; u^{(1)}, \ldots, u^{(\gamma)}}(x).
\end{align*}
\end{theorem}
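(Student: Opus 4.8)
The strategy is to realize the standard limit as the standard part of a single hyperfinite spherical integral and then reduce, via the $S$-continuity machinery of the previous subsection together with overflow, to the finite-dimensional case already settled in Proposition \ref{basic}. First I would observe that by the transfer principle it suffices to show that for every hyperfinite $N > \mathbb{N}$,
\begin{align*}
    {^*}a_{f,m}\bigl((u^{(1)})_{(N)}, \ldots, (u^{(\gamma)})_{(N)}, N\bigr) \approx \int_{\mathbb{R}^k} f \, d\mu_{0; u^{(1)}, \ldots, u^{(\gamma)}},
\end{align*}
where on the left we view the $(u^{(i)})_{(N)}$ as vectors in ${^*}\mathbb{R}^N$ (they are not exactly orthonormal, but they are \emph{infinitesimally} close to an orthonormal system since $\norm{u^{(i)} - (u^{(i)})_{(N)}}_{\ell^2} \approx 0$, so the Separation Property holds and Theorem \ref{Rotation Claim} applies). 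Indeed, if this $\approx$ holds for all hyperfinite $N$, then the internal sequence $n \mapsto {^*}a_{f,m}(\cdot, n)$ is infinitely close to the target real number for all $n > \mathbb{N}$, and underflow (equivalently, the nonstandard characterization of convergence) yields the claimed standard limit.

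Next, to establish this infinitesimal approximation for a fixed hyperfinite $N$, the plan is to compare the truncated system $(u^{(i)})_{(N)}$ with a \emph{much shorter} truncation $(u^{(i)})_{(M)}$ for a suitable hyperfinite $M$ with $M < N$ but $M$ still "small". Here I would use overflow: the statement "for all $n \in \mathbb{N}_{\geq k}$, $\abs{{^*}a_{f,m}((u^{(1)})_{(n)}, \ldots, (u^{(\gamma)})_{(n)}, \nu) - \int_{\mathbb{R}^k} f\, d\mu_{0; u^{(1)}, \ldots, u^{(\gamma)}}} < \tfrac1n$ for all $\nu \geq$ something" is not quite literally what Proposition \ref{basic} gives, so more precisely I would: (a) fix a standard $\varepsilon > 0$; (b) use Proposition \ref{basic} applied to the orthonormal vectors $u^{(1)}, \ldots, u^{(\gamma)}$ (eventually zero after some index, after a small perturbation — or rather, for each finite $m$ the Gram--Schmidt correction of $(u^{(i)})_{(m)}$) together with Lemma \ref{continuity in finite dimensions 1} to see that for all sufficiently large standard $m$, the finite-dimensional integral $\theta_{f,m}$ evaluated at the orthonormalization of $(u^{(i)})_{(m)}$ is within $\varepsilon$ of the target, and the spherical integrals $a_{f,m}(\cdot, n)$ converge as $n \to \infty$ to that $\theta_{f,m}$ value; (c) invoke overflow to push "sufficiently large $m$" up to some hyperfinite $M$ and "sufficiently large $n$" up to $N$; (d) apply Lemma \ref{continuity in finite dimensions 2} (the $S$-continuity of $a_{f,m}(\cdot, N)$, which rests on Theorem \ref{Rotation Claim}) to replace the orthonormalized $(u^{(i)})_{(M)}$ by the orthonormalized $(u^{(i)})_{(N)}$, since these differ infinitesimally because the tail $\sum_{j > M}(u^{(i)}_j)^2 \approx 0$.

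Concretely, the chain of approximations I expect is
\begin{align*}
    {^*}a_{f,m}\bigl((u^{(i)})_{(N)}, N\bigr)
    &\approx {^*}a_{f,m}\bigl(\text{orthonormalized }(u^{(i)})_{(M)}, N\bigr) \quad \text{(by Lemma \ref{continuity in finite dimensions 2})}\\
    &\approx {^*}\theta_{f,M}\bigl(\text{orthonormalized }(u^{(i)})_{(M)}\bigr) \quad \text{(by overflow from Proposition \ref{basic})}\\
    &\approx \int_{\mathbb{R}^k} f \, d\mu_{0; u^{(1)}, \ldots, u^{(\gamma)}} \quad \text{(by Lemma \ref{continuity in finite dimensions 1} and } \norm{u^{(i)} - (u^{(i)})_{(M)}} \approx 0\text{)}.
\end{align*}
The first $\approx$ is where almost all of the work sits, because it is precisely the content of Theorem \ref{Rotation Claim}/Lemma \ref{continuity in finite dimensions 2}: the two slicing hyperplanes $\bigcap (u^{(i)})_{(M)}^\perp$ and $\bigcap (u^{(i)})_{(N)}^\perp$ inside ${^*}\mathbb{R}^N$ differ only infinitesimally, so the corresponding hyperfinite-dimensional spheres carry essentially the same Loeb surface measure as seen by the (uniformly continuous, hence $S$-continuous) function ${^*}f$ read off the first $k$ coordinates.

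\textbf{Main obstacle.} The delicate point is the interaction between overflow and the "moving" dimension $n$: I need a \emph{single} internal statement, quantified over $m$ (and over $\nu$), whose standard instances all hold, so that overflow delivers a hyperfinite $M$ for which the statement still holds with $\nu = N$. Formulating this internal statement carefully — so that it simultaneously encodes "the orthonormalization of $(u^{(i)})_{(m)}$ is defined", "$a_{f,m}(\cdot, \nu)$ is $\varepsilon$-close to $\theta_{f,m}$ for all $\nu \geq$ (some bound depending on $m$)", and "$\theta_{f,m}$ is $\varepsilon$-close to the target" — and then checking that $N$ exceeds the relevant $\nu$-bound at the chosen $M$, is the technical heart of the argument. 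The rest is bookkeeping with the $\ell^2$-tails going to zero and appeals to the already-proved continuity lemmas.
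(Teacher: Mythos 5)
Your overall strategy matches the paper's almost exactly: fix a hyperfinite $N$, reduce to showing $\st\bigl({^*}a_{f,N}((u^{(1)})_{(N)},\ldots,(u^{(\gamma)})_{(N)},N)\bigr)$ equals the Gaussian integral, introduce an intermediate hyperfinite truncation $M$ via overflow, and link the $M$- and $N$-level systems through the Separation Property and Theorem \ref{Rotation Claim}. Your chain of three $\approx$'s is exactly the chain the paper runs, and you correctly identify Proposition \ref{hyperfinite truncations are SP} as the reason the truncations are eligible for the rotation argument.

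The one place you stop short is precisely where you flag the ``main obstacle'': the shape of the internal statement to which overflow is applied. The formulation you sketch, with the fixed real target $\int_{\mathbb{R}^k} f\,d\mu_{0}$ on the right and the specific (and $n$-dependent) Gram--Schmidt corrections of $(u^{(i)})_{(n)}$ plugged in, is workable but awkward, because you then need to track how close each $\theta_{f,m}$ is to the $\ell^2$ limit and juggle two running indices. The paper's resolution is cleaner and is the key technical device you should adopt: define the internal set
\begin{align*}
        \mathcal{G} = \Bigl\{m \in {^*}\mathbb{N}: m \leq N \text{ and } \forall\,(v^{(1)},\ldots,v^{(\gamma)}) \in ({^*}\mathbb{R}^m)^{[\gamma]},\ \bigl| {^*}a_{f,m}(v,N) - {^*}\theta_{f,m}(v) \bigr| < \tfrac{1}{m}\Bigr\},
\end{align*}
quantified over \emph{all} internal orthonormal $\gamma$-tuples in ${^*}\mathbb{R}^m$, and compare $a_{f,m}(v,N)$ directly to $\theta_{f,m}(v)$ evaluated at the \emph{same} $v$ rather than to the $\ell^2$ target. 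For standard $m$, the containment $\mathbb{N}\subseteq\mathcal{G}$ follows from compactness of $(\mathbb{R}^m)^{[\gamma]}$ together with Lemmas \ref{continuity in finite dimensions 1} and \ref{continuity in finite dimensions 2} and Proposition \ref{basic}; overflow then hands you $M>\mathbb{N}$ with $M\in\mathcal{G}$, and the Gram--Schmidt orthonormalization $w^{(i)}$ of $(u^{(i)})_{(M)}$ is one admissible choice of $v$. Only after that do you use \eqref{closeness of k truncations} and the continuity of $G_f$ to move from $\theta_{f,M}(w)$ to the $\ell^2$ target. A second small correction: your first $\approx$ is not an application of Lemma \ref{continuity in finite dimensions 2} as stated, because $(u^{(i)})_{(N)}$ is not an orthonormal tuple and hence lies outside $({^*}\mathbb{R}^N)^{[\gamma]}$; you must invoke Theorem \ref{Rotation Claim} directly, for which SP plus finite norms plus $\norm{(u^{(i)})_{(N)} - w^{(i)}}\approx 0$ suffice. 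You gesture at this in your parenthetical about the truncations being only approximately orthonormal, but the citation should be to the theorem, not the lemma built on top of it.
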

\begin{proof}
Fix $f$ as above. Since the limit of a sequence, if it exists, is the same as the standard part of any element with a hyperfinite index in the nonstandard extension of the sequence, it suffices to show that $\st \left({^*}a_{f, N}\left(u^{(1)}_{(N)}, \ldots, u^{(\gamma)}_{(N)}, N\right)\right)$ equals $\int_{\mathbb{R}^k} f(x) d\mu_{0}(x)$. Consider the following internal set:
\begin{align*}
        \mathcal{G} \defeq \Bigg\{m \in {^*}\mathbb{N}: m &\leq N, \text{ and } ~\forall (v^{(1)}, \ldots, v^{(\gamma)}) \in ({{^*}{\mathbb{R}^m}})^{[\gamma]} \\
        &\left(\left| {^*}a_{f, m}(v^{(1)}, \ldots, v^{(\gamma)}, N) - {^*}\theta_{f,m}(v^{(1)}, \ldots, v^{(\gamma)}) \right| < \frac{1}{m} \right)\Bigg\}.
    \end{align*}
    
By Lemma \ref{continuity in finite dimensions 1}, Lemma \ref{continuity in finite dimensions 2} and Corollary \ref{result for eventually zero vectors}, it follows that $\mathbb{N} \subseteq \mathcal{G}$. By overflow, there exists $M > \mathbb{N}$ such that $\{1, \ldots, M\} \subseteq \mathcal{G}$. Fix this $M$. By Lemma \ref{independence}, the vectors $(u^{(1)})_{(M)}, \ldots, (u^{(\gamma)})_{(M)}$ are ${^*}\mathbb{R}$-linearly independent. Use the Gram-Schmidt algorithm to get orthonormal vectors with the same linear span:
\begin{align} \label{Gram-Schmidt}
           w^{(1)} &\defeq \frac{(u^{(1)})_{(M)}}{\norm{(u^{(1)})_{(M)}}}, \\   
         \nonumber  w^{(2)} &\defeq \frac{(u^{(2)})_{(M)} - {\langle (u^{(2)})_{(M)}, w^{(1)}\rangle}w^{(1)}}{\norm{(u^{(2)})_{(M)} - {\langle (u^{(2)})_{(M)}, w^{(1)}\rangle}w^{(1)}}}, \\
         \nonumber w^{(3)} &\defeq \frac{(u^{(3)})_{(M)} - {\langle (u^{(3)})_{(M)}, w^{(1)}\rangle}w^{(1)} - {\langle (u^{(3)})_{(M)}, w^{(2)}\rangle}w^{(2)}}{\norm{(u^{(3)})_{(M)} - {\langle (u^{(3)})_{(M)}, w^{(1)}\rangle}w^{(1)} - {\langle (u^{(3)})_{(M)}, w^{(2)}\rangle}w^{(2)}}} ,\\
         \nonumber  &\vdots \\
         \nonumber  w^{(\gamma)} &\defeq \frac{(u^{(\gamma)})_{(M)} - {\langle (u^{(\gamma)})_{(M)}, w^{(1)}\rangle}w^{(1)} - \ldots - {\langle (u^{(\gamma)})_{(M)}, w^{(\gamma - 1)}\rangle}w^{(\gamma - 1)}}{\norm{(u^{(\gamma)})_{(M)} - {\langle (u^{(\gamma)})_{(M)}, w^{(1)}\rangle}w^{(1)} - \ldots - {\langle (u^{(\gamma)})_{(M)}, w^{(\gamma - 1)}\rangle}w^{(\gamma - 1)}}}.
\end{align}

Since $M \in \mathcal{G}$, we have 
    \begin{align}\label{hyperfinite approximation}
        \left| {^*}a_{f, M}(w^{(1)}, \ldots, w^{(\gamma)}, N) - {^*}\theta_{f, M}(w^{(1)}, \ldots, w^{(\gamma)}) \right| < \frac{1}{M} \approx 0.
    \end{align}

Since $\inp{u^{(i)}}{u^{(j)}}_{\ell^2(\mathbb{R})} = \lim_{m \rightarrow \infty} \inp{(u^{(i)})_{(m)}}{(u^{(j)})_{(m)}} = 0$ if $i \neq j$, the nonstandard characterization of limits implies that 
\begin{align}\label{orthogonal M}
    \inp{(u^{(i)})_{(M)}}{(u^{(j)})_{(M)}} \approx 0 \text{ for } i \neq j.
\end{align}

Similarly, $\norm{(u^{(i)})_{(M)}} \approx \lim_{m \rightarrow \infty} \norm{(u^{(i)})_{(M)}} = 1$, and $\inp{(u^{(i)})_{(M)}}{w^{(j)}} \approx 0$ for all $j \in \{1, \ldots, \gamma\} \backslash \{i\}$ (for a given $i \in \{1, \ldots, j\}$, this follows by induction on $j$ using \eqref{orthogonal M}). Truncating to the first $k$ coordinates, we thus obtain (by induction on $i$):
\begin{align}\label{closeness of k truncations}
    \norm{(w^{(i)})_k - (u^{(i)})_{(k)}} \approx 0 \text{ for all } i.
\end{align} 

Hence the covariance matrix defined by $(w^{(1)})_{(k)}, \ldots, (w^{(\gamma)})_{(k)}$ is infinitesimally close to that defined by $(u^{(1)})_{(k)}, \ldots, (u^{(\gamma)})_{(k)}$ in ${^*}$operator norm, i.e., 
\begin{align}
  &\Bigg\vert \Bigg\vert \left(I - {\norm {(w^{(1)})_{(k)}}}^2 P_{(w^{(1)})_{(k)}} - \ldots -  {\norm {(w^{(\gamma)})_{(k)}}}^2 P_{(w^{(\gamma)})_{(k)}}\right) \nonumber\\
  &- \left(I - {\norm {(u^{(1)})_{(k)}}}^2 P_{(u^{(1)})_{(k)}} - \ldots -  {\norm {(u^{(\gamma)})_{(k)}}}^2 P_{(u^{(\gamma)})_{(k)}}\right) \Bigg\vert \Bigg\vert \approx 0. \label{covariance approximation}
\end{align}

The continuity of $G_f$ thus yields the following: 
\begin{gather*}
    {^*}G_f\left(I - {\norm {(w^{(1)})_{(k)}}}^2 P_{(w^{(1)})_{(k)}} - \ldots -  {\norm {(w^{(\gamma)})_{(k)}}}^2 P_{(w^{(\gamma)})_{(k)}})\right) \approx \int_{\mathbb{R}^k} f d\mu_{0}
\end{gather*}
Also, by transfer we have:
\begin{gather*}
     {^*}G_f\left(I - {\norm {(w^{(1)})_{(k)}}}^2 P_{(w^{(1)})_{(k)}} - \ldots -  {\norm {(w^{(\gamma)})_{(k)}}}^2 P_{(w^{(\gamma)})_{(k)}} \right) \\
     =  {^*}\theta_{f, M}(w^{(1)}, \ldots, w^{(\gamma)}).
\end{gather*}

Hence, using (\ref{hyperfinite approximation}), we get ${^*}a_{f, M}(w^{(1)}, \ldots, w^{(\gamma)}, N) \approx \int_{\mathbb{R}^k} f d\mu_{0}$. Thus, it suffices to show that ${^*}a_{f, M}(w^{(1)}, \ldots, w^{(\gamma)}, N) \approx a_{f, N}((u^{(1)})_{(N)}, \ldots, (u^{(\gamma)})_{(N)}, N)$. Since $f$ is bounded, ${^*}f$ is $S$-integrable on $S^{N-1}(\sqrt{N}) \cap {u^{(1)}}_{(N)}^{\perp} \cap \ldots \cap {u^{(\gamma)}}_{(N)}^{\perp}$, so that the above is equivalent to showing the following for any $f \in C_c(\mathbb{R}^k)$:
\begin{align}
\int_{S^{N-1}(\sqrt{N}) \cap {w^{(1)}}^{\perp} \cap \ldots \cap {w^{(\gamma)}}^{\perp}} &\st({^*}f(x)) dL{\bar{\sigma}}(x) \nonumber\\
\label{claim}
&= \int_{S^{N-1}(\sqrt{N}) \cap {u^{(1)}}_{(N)}^{\perp} \cap \ldots \cap {u^{(\gamma)}}_{(N)}^{\perp}} \st({^*}f(x)) dL{\bar{\sigma}}(x).
\end{align}

This follows from Proposition \ref{hyperfinite truncations are SP} and Theorem \ref{Rotation Claim}, completing the proof.
\end{proof}
\end{section}

\begin{section}{Integrating continuous functions over non-great circles}
In this section, we prove Theorem \ref{Peterson-Sungupta} for all bounded continuous functions. We recall some notation here for convenience. We fix $p_1, \ldots, p_{\gamma} \in \mathbb{R}$, and for any $n \in \mathbb{N}$, we consider the sets 
\begin{align*}
A &\defeq \{x \in \ell^2(\mathbb{R}): \langle x, u^{(i)} \rangle = p_i \text{ for all } i \in \{1, \ldots, \gamma\}\},\\
H_n &\defeq \{x \in \mathbb{R}^n: \langle x, (u^{(i)})_{(n)} \rangle = 0 \text{ for all } i \in \{1, \ldots, \gamma\}\}, \\
A_n &\defeq \{x \in \mathbb{R}^n: \langle x, (u^{(i)})_{(n)} \rangle = p_i \text{ for all } i \in \{1, \ldots, \gamma\}\},\\
S_{A_n} &\defeq S^{n-1}(\sqrt{n}) \cap A_n, \text{ and}\\
S_{H_n} &\defeq S^{n-1}(\sqrt{n}) \cap H_n.
\end{align*}

Let $z^{(1)}, \ldots, z^{(\gamma)}$ be the Gram-Schmidt orthonormalization of the ${^*}\mathbb{R}$-linearly independent vectors $(u^{(1)})_{(N)}, \ldots, (u^{(\gamma)})_{(N)}$ (see Lemma \ref{independence}). Define
  \begin{align*}
   S \defeq S_{H_N} + \left( \frac{p_1}{\norm{(u^{(1)})_{(N)}}}\right )\frac{(u^{(1)})_{(N)}}{{\norm{(u^{(1)})_{(N)}}}} + \ldots + \left( \frac{p_{\gamma}}{\norm{(u^{(\gamma)})_{(N)}}}\right )\frac{(u^{(\gamma)})_{(N)}}{{\norm{(u^{(\gamma)})_{(N)}}}}.
   \end{align*}
   
It is clear that $S_{A_N}$ and $S$ are $(N- \gamma - 1)$-dimensional spheres contained in $A_N$, and that they have the same center $\theta_N$, where

\begin{align}\label{defining theta_N}
   \theta_N &\defeq \left( \frac{p_1}{\norm{(u^{(1)})_{(N)}}}\right )\frac{(u^{(1)})_{(N)}}{{\norm{(u^{(1)})_{(N)}}}} + s\ldots + \left( \frac{p_{\gamma}}{\norm{(u^{(\gamma)})_{(N)}}}\right )\frac{(u^{(\gamma)})_{(N)}}{{\norm{(u^{(\gamma)})_{(N)}}}} \\
   &= q_1z^{(1)} + \ldots  q_{\gamma}z^{(\gamma)} \label{defining theta_N 2} \text{ for some } q_1, \ldots, q_{\gamma} \in {^*}{\mathbb{R}}.
\end{align}

Using the expressions for the $z^{(i)}$ (see \eqref{Gram-Schmidt appendix}) and the fact that $\norm{(u^{(i)})_{(N)}} \approx 1$ for all $i \in \{1, \ldots, \gamma\}$, it follows by induction on $i$ that 
$q_i \approx p_i \text{ for all } i \in \{1, \ldots, \gamma\}$. By truncating onto the first $k$ coordinates in \eqref{defining theta_N} and \eqref{defining theta_N 2}, we thus get:
\begin{align}\label{approximation after truncation}
    q_1(z^{(1)})_{(k)} + \ldots  q_{\gamma}(z^{(\gamma)})_{(k)} \approx  p_1(u^{(1)})_{(k)} + \ldots  p_{\gamma}(u^{(\gamma)})_{(k)}. 
\end{align}

Let $r_N = \frac{\text{Radius}\left( S_{A_N} \right)}{\text{Radius} \left( S \right)} = \frac{\sqrt{N - {q_1}^2 - \ldots - {q_{\gamma}}^2}}{\sqrt{N}} \approx 1$. Then  we have
\begin{align}\label{SLn}
    S_{A_N} = r_N \cdot S_{H_N} + \theta_N.
\end{align}

\begin{centering}
\begin{figure}
    \centering
    \tikzset{
    partial ellipse/.style args={#1:#2:#3}{
        insert path={+ (#1:#3) arc (#1:#2:#3)}
 }}

 {
\centering

   \begin{tikzpicture} 
[scale=.8,   rotate= 40]
\def\R{3} 
\def\angEl{40} 
\filldraw[ball color=white] (0,0) circle (\R);
 \foreach \t in {-80,-40,...,80} { \DrawLatitudeCircle[\R]{\t} }

\coordinate [label=left:\textcolor{black}{${}$}](G) at (-6,.5);

\coordinate [label=left:\textcolor{black}{${}$}] (C) at (5,.5);

\coordinate (F) at (4,2.5);

\coordinate (H) at (-5 ,2.5);


 \coordinate [label=left:\textcolor{black}{$A_N$}] (L) at (3.5, 2.25);
 \coordinate [label=left:\textcolor{black}{$S^{N-1}(\sqrt{N})$}] (X) at
 (3.8, -4.2);
\coordinate [label=left:\textcolor{black}{$\theta_N$}] (L) at (-0, 1.35);
\coordinate [label=left:\textcolor{black}{$\mathbf{0}$}] (L) at (0,0);

\coordinate [label=left:\textcolor{red}{$r_N\sqrt{N}$}] (L) at (1.6, 1.5);
 \coordinate [label=left:\textcolor{black}{$H_N$}] (X) at
 (8, -2);
 
 \draw (G)-- (H);

 \draw (C)--(F);


\shadedraw [color=black, opacity = .3]  (-7,-.25)--(6,-.25)--(4.5,3)--(-5.5,3)--(-7,-.25);
\shadedraw [color=black, opacity = .3]  (-5,-1.25)--(8,-1.25)--(6.5,2)--(-3.5,2)--(-5,-1.25);

 \draw[->, shorten <=1pt](0,0)--(0, 1.5);
 \draw[->, color=red, shorten <=1pt](0,1.5)--(2.5, 1.5);
 
  \draw[->, color=blue, shorten <=1pt](0,0)--(3, 0);
  \coordinate [label=left:\textcolor{blue}{$\sqrt{N}$}] (L) at (1.6, -0.75);

\draw[black,fill=black] (0,0) circle (.25ex)
node[pos=.5,yshift=2.73cm,xshift=-0.25cm] {$S_{A_N}$};

 \draw [dashed] (0,1.5) ellipse (2.5  and 1);

 \draw[thick] (0,1.5) [partial ellipse=150:370: 2.5 and 1];

\draw [dashed] (0,0) ellipse (3  and 1);  
\draw[thick] (0,0) [partial ellipse=150:370: 3 and 1]
node[pos=.5,yshift=0.25cm,xshift=-.2cm] {$S_{H_N}$}; 
\end{tikzpicture}

}
\label{F:slice}
    \caption{Visualizing $S_{A_N}$ in contrast with $S_{H_N}$}
    \label{fig:my_label}
\end{figure}\end{centering}

Since the $(u^{(i)})_{(n)}$ are $\mathbb{R}$-linearly independent in $\mathbb{R}^n$ for all large $n \in \mathbb{N}$, we can carry out the above construction to define $\theta_n$ for all $n \in \mathbb{N}_{\geq n'}$, where $n' \in \mathbb{N}$. By the formula corresponding to \eqref{SLn}, we thus have:
\begin{align}\label{formula for calculating measure}
\forall n \in \mathbb{N}_{\geq n'} ~\forall B \in \mathcal{B}(\mathbb{R}^k) \left[\bar{\sigma}_{S_{A_n}}(B) = \bar{\sigma}_{S_{H_n}} \left( \frac{1}{r_n} (B - \pi_k(\theta_n))\right) \right],
\end{align}
where $\pi_k$ denotes the projection onto the first $k$ coordinates under the standard orthonormal basis. We are now in a position to show that $\lim_{n \rightarrow \infty}\int_{S_{A_n}} f d{\bar{\sigma}}$ equals the corresponding Gaussian expectation of $f$ for all $f \in C_c(\mathbb{R}^k)$.

\begin{theorem}\label{main}
Let $f\colon \mathbb{R}^k \rightarrow \mathbb{R}$ be continuous with compact support. Then
\begin{align*}
    \lim_{n \rightarrow \infty} \int_{S_{A_n}} f d{\bar{\sigma}} = \int_{\mathbb{R}^k} f d{\mu}_{\bar{\eta}, u^{(1)}, \ldots, u^{(\gamma)}}.
\end{align*}
\end{theorem}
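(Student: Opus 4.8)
The plan is to reduce the non-great-circle case to the great-circle case handled in Theorem~\ref{hyperplane}, using the decomposition $S_{A_N} = r_N \cdot S_{H_N} + \theta_N$ from \eqref{SLn} together with the scaling and translation properties of uniform surface measure. First I would pass to the nonstandard formulation: since $\lim_{n\to\infty}\int_{S_{A_n}} f\, d\bar\sigma$ equals $\st\!\left(\int_{S_{A_N}} {^*}f\, d{^*}\bar\sigma\right)$ for any hyperfinite $N$ (provided the limit exists, which will follow once we compute the standard part and see it is independent of $N$), and since $f$ being bounded continuous with compact support makes ${^*}f$ $S$-integrable on $S_{A_N}$, it suffices to identify $\st\!\left(\int_{S_{A_N}} {^*}f\, d{^*}\bar\sigma\right)$ with $\int_{\mathbb{R}^k} f\, d\mu_{\bar\eta, u^{(1)},\ldots,u^{(\gamma)}}$.

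Next I would apply the change of variables coming from $S_{A_N} = r_N S_{H_N} + \theta_N$ with $r_N \approx 1$: for $x \in S_{H_N}$, the point $r_N x + \theta_N$ has first-$k$ coordinates $r_N \pi_k(x) + \pi_k(\theta_N)$, and by \eqref{approximation after truncation} we have $\pi_k(\theta_N) \approx p_1 (u^{(1)})_{(k)} + \cdots + p_\gamma (u^{(\gamma)})_{(k)} = \bar\eta$. Since $r_N \approx 1$ and almost every $x \in S_{H_N}$ has finite coordinates along the first $k$ directions (Proposition~\ref{finite almost everywhere}), we get $r_N \pi_k(x) + \pi_k(\theta_N) \approx \pi_k(x) + \bar\eta$ for $L\bar\sigma$-almost all $x \in S_{H_N}$. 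By uniform continuity of $f$ (here is where compact support plus continuity gives uniform continuity), this yields $\st({^*}f(r_N x + \theta_N)) = \st({^*}f(\pi_k(x) + \bar\eta))$ almost everywhere, hence
\begin{align*}
\int_{S_{A_N}} \st({^*}f)\, dL\bar\sigma = \int_{S_{H_N}} \st({^*}f(\,\cdot\, + \bar\eta))\, dL\bar\sigma = \int_{S_{H_N}} \st({^*}g)\, dL\bar\sigma,
\end{align*}
where $g(x) \defeq f(x + \bar\eta)$ is again bounded and uniformly continuous. Applying Theorem~\ref{hyperplane} to $g$ gives $\st\!\left(\int_{S_{H_N}} {^*}g\, d{^*}\bar\sigma\right) = \int_{\mathbb{R}^k} g\, d\mu_{0; u^{(1)},\ldots,u^{(\gamma)}} = \int_{\mathbb{R}^k} f(x+\bar\eta)\, d\mu_{0;u^{(1)},\ldots,u^{(\gamma)}}(x)$, which is exactly $\int_{\mathbb{R}^k} f\, d\mu_{\bar\eta, u^{(1)},\ldots,u^{(\gamma)}}$ since the latter Gaussian measure is the translate of $\mu_{0;u^{(1)},\ldots,u^{(\gamma)}}$ by $\bar\eta$. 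Since the resulting value does not depend on the choice of hyperfinite $N$, the standard limit exists and equals this common value.

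The main obstacle I expect is justifying the change-of-variables step at the level of Loeb measures with the scaling factor $r_N$ that is merely infinitesimally close to $1$ rather than equal to $1$: one must be careful that the composition of the internal measure isomorphism $x \mapsto r_N x + \theta_N$ from $S_{H_N}$ to $S_{A_N}$ with the standard part map behaves well, i.e.\ that $L\bar\sigma_{S_{A_N}}$ is the pushforward of $L\bar\sigma_{S_{H_N}}$ under this map, which follows from \eqref{formula for calculating measure} by transfer and taking standard parts, but the interaction of $r_N$-dilation with the "almost all points have finite coordinates" statement (so that $r_N \approx 1$ actually produces an infinitesimal perturbation of the relevant coordinates rather than a merely bounded one) needs Proposition~\ref{finite almost everywhere} applied on the correct sphere. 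The rest is bookkeeping: checking that $f \in C_c(\mathbb{R}^k)$ implies $f$ is bounded and uniformly continuous so that Theorem~\ref{hyperplane} applies to the translated function $g$, and that translating a Gaussian measure by $\bar\eta$ changes its mean from $0$ to $\bar\eta$ without changing its covariance.
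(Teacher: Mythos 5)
Your proposal is correct and takes essentially the same approach as the paper: both reduce to Theorem \ref{hyperplane} via the decomposition $S_{A_N} = r_N\, S_{H_N} + \theta_N$, the approximations $r_N \approx 1$ and $\pi_k(\theta_N) \approx \bar\eta$, uniform continuity of $f$, $S$-integrability, Proposition \ref{finite almost everywhere}, and the scaling and translation properties of surface measure, with the translated function you call $g$ being exactly the paper's $h(y) = f(y + p_1 (u^{(1)})_{(k)} + \cdots + p_\gamma (u^{(\gamma)})_{(k)})$. The only cosmetic difference is direction: the paper starts from the known equality $\int_{S_{H_N}} \st({^*}h)\, dL\bar\sigma = \int_{\mathbb{R}^k} f\, d\mu_{\bar\eta, u^{(1)}, \ldots, u^{(\gamma)}}$ and transforms the domain step by step into $S_{A_N}$, whereas you start from $\int_{S_{A_N}} \st({^*}f)\, dL\bar\sigma$ and change variables back to $S_{H_N}$.
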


\begin{proof}
Define $h \colon \mathbb{R}^k \rightarrow \mathbb{R}$ by $h(y) = f\left( y + p_1 (u^{(1)})_{(k)} + \ldots p_{\gamma} (u^{(\gamma)})_{(k)} \right)$ for all $y \in \mathbb{R}^k$. Note the following chain of equations (line 1 follows from transfer of the corresponding expressions for $a_{h,n}$ (as $n$ varies over $\mathbb{N}$) in Definition \ref{a and theta}, line 2 follows from Theorem \ref{hyperplane}, line 3 follows from the definition of $h$, while line 4 follows from properties of Gaussian distributions):
\begin{flalign*}
a_{h, n}(z^{(1)}, \ldots, z^{(\gamma)}, N) &= \starint_{S^{N-1}(\sqrt{N}) \cap {(u^{(1)})_{(N)}}^{\perp} \cap \ldots \cap {(u^{(\gamma)})_{(N)}}^{\perp}} {^*}h d{^*}\bar\sigma &&\\
&\approx \int_{\mathbb{R}^k} h d\mu_{0, u^{(1)}, \ldots, u^{(\gamma)}} &&\\
&= \int_{\mathbb{R}^k} f\left( y + p_1 (u^{(1)})_{(k)} + \ldots p_{\gamma} (u^{(\gamma)})_{(k)} \right) d{\mu}_{0, u^{(1)}, \ldots, u^{(\gamma)}} &&\\
&= \int_{\mathbb{R}^k} f d{\mu}_{\bar{\eta}, u^{(1)}, \ldots, u^{(\gamma)}}, &&
\end{flalign*}
where $\bar{\eta} = p_1 (u^{(1)})_{(k)} + \ldots p_{\gamma} (u^{(\gamma)})_{(k)}$.

The $S$-integrability of ${^*}h$ thus implies that 
\begin{align}\label{useful equation}
    \int_{S^{N-1}(\sqrt{N}) \cap {z^{(1)}}^{\perp} \cap \ldots \cap {z^{(\gamma)}}^{\perp}} \st ({^*}h) dL{\bar{\sigma}} &= \int_{\mathbb{R}^k} f d{\mu}_{\bar{\eta}, u^{(1)}, \ldots, u^{(\gamma)}}.
\end{align}

Using \eqref{useful equation} and the nonstandard characterization of uniform continuity (which, in particular, implies that ${^*}h(x) \approx {^*}h(rx)$ for all $x \in {^*}\mathbb{R}^N$ and $r \approx 1$), we obtain:

\begin{align}
    \int_{\mathbb{R}^k} f d{\mu}_{\bar{\eta}, u^{(1)}, \ldots, u^{(\gamma)}} &= \int_{S^{N-1}(\sqrt{N}) \cap {z^{(1)}}^{\perp} \cap \ldots \cap {z^{(\gamma)}}^{\perp}} \st ({^*}h(x)) dL{\bar{\sigma}}(x) \nonumber \\
    &= \int_{S_{N, z^{(1)}, \ldots, z^{(\gamma)}}} \st({^*}h(rx)) d{L\bar{\sigma}}(x). \label{first break}
\end{align}

Note that the composition of ${^*}h$ with the scaling by $r$ is a finitely bounded, and hence $S$-integrable, function. This and transfer of the scaling properties of the uniform surface measures respectively imply the following: 
\begin{align}\nonumber
    \int_{\mathbb{R}^k} f d{\mu}_{\bar{\eta}, u^{(1)}, \ldots, u^{(\gamma)}} &\approx \starint_{S_{N, z^{(1)}, \ldots, z^{(\gamma)}}} {^*}h(rx) d{^*}\bar{\sigma} \\
    &\approx \starint_{r \cdot S_{N, z^{(1)}, \ldots, z^{(\gamma)}}} {^*}h(x) d{{^*}{\bar{\sigma}}}(x) \label{second break}.
\end{align}

The proof is now contained in the following sequence of equations obtained by simplifying \eqref{second break}:

\begin{align*}
    \int_{\mathbb{R}^k} f d{\mu}_{\bar{\eta}, u^{(1)}, \ldots, u^{(\gamma)}} 
    &\approx \starint_{r \cdot S_{N, z^{(1)}, \ldots, z^{(\gamma)}}} {^*}f\left(x + q_1 {z^{(1)}} + \ldots + q_{\gamma}{z^{(\gamma)}}\right) d{{^*}{\bar{\sigma}}}(x) \\
    &= \starint_{r \cdot S_{N, z^{(1)}, \ldots, z^{(\gamma)}} + q_1 {z^{(1)}} + \ldots + q_{\gamma}{z^{(\gamma)}}} {^*}f(x) d{{^*}{\bar{\sigma}}}(x) \\
    &= \starint_{S_{A_N}} {^*}f(x) d{{^*}{\bar{\sigma}}}(x).
\end{align*}

The first line follows from the fact that ${^*}h$ is bounded by a real number (and is hence $S$-integrable) and the following fact that is true for all $x \in {^*}\mathbb{R}^N$ (due to the nonstandard characterization of the uniform continuity of $f \colon \mathbb{R}^k \rightarrow \mathbb{R}$ and \eqref{approximation after truncation}): 
\begin{align*}
    {^*}h(x) &= {^*}f\left(x + p_1 (u^{(1)})_{(k)} + \ldots p_{\gamma} (u^{(\gamma)})_{(k)} \right) \\
    &\approx {^*}f\left(x + q_1 {(z^{(1)})_{(k)}} + \ldots + q_{\gamma}{(z^{(\gamma)})_{(k)}}\right) \\
    &= {^*}f\left(x + q_1 {z^{(1)}} + \ldots + q_{\gamma}{z^{(\gamma)}}\right).
\end{align*}
The second line follows by transfer of the translation properties of the uniform surface measures. The third line follows from \eqref{SLn}. 
\end{proof}

Using Theorem \ref{main}, we immediately deduce that the first $k$ coordinates of almost any point of $S_{A_N}$ are finite. 

\begin{theorem}\label{almost all points are finite}
     Almost all points of $S_{A_N}$ have finite projections to ${^*}\mathbb{R}^k$, i.e.,
     \begin{align*}
         L\bar{\sigma}(\{x \in S_{A_N}: x_1, \ldots, x_k \in {^*}\mathbb{R}_{\text{fin}}\}) = 1.
     \end{align*}
\end{theorem}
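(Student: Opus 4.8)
The plan is to deduce this as a quick corollary of Theorem~\ref{main}, using the standard fact that a measure assigns measure zero to the ``infinite'' part whenever the pushforward under each coordinate is a genuine (finite, Radon) Borel measure on $\mathbb{R}$. Concretely, fix $j \in \{1, \dots, k\}$ and consider the $j$-th coordinate function $x \mapsto x_j$ on $S_{A_N}$. It suffices to show that $L\bar{\sigma}(\{x \in S_{A_N} : x_j \in {^*}\mathbb{R}_{\text{fin}}\}) = 1$ for each such $j$, since the intersection of finitely many full-measure sets is full-measure. So the whole statement reduces to controlling a single coordinate.

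To control the $j$-th coordinate, I would test against a family of bump functions. For each $m \in \mathbb{N}$, pick $f_m \in C_c(\mathbb{R}^k)$ with $0 \le f_m \le 1$, $f_m \equiv 1$ on the slab $\{x : |x_j| \le m\}$ intersected with $\{|x| \le m\}$ (or more simply take $f_m$ depending only on $x_j$, say $f_m(x) = \psi(x_j/m)$ for a fixed continuous $\psi$ with compact support, $\psi \equiv 1$ near $0$ — but one must keep compact support in all $k$ variables, so multiply by a cutoff in the remaining coordinates and let that cutoff also grow; alternatively just use $f_m(x) = \psi_1(x_j)\cdot \chi(x/m)$). Applying Theorem~\ref{main} to $f_m$ gives
\begin{align*}
\st\left( \starint_{S_{A_N}} {^*}f_m \, d{^*}\bar\sigma \right) = \int_{\mathbb{R}^k} f_m \, d\mu_{\bar\eta, u^{(1)}, \dots, u^{(\gamma)}},
\end{align*}
and since ${^*}f_m$ is bounded (hence $S$-integrable) the left side equals $\int_{S_{A_N}} \st({^*}f_m) \, dL\bar\sigma$. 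Now $\st({^*}f_m(x)) \le 1$ everywhere and, crucially, on the set $\{x \in S_{A_N} : x_j \not\in {^*}\mathbb{R}_{\text{fin}}\}$ (equivalently $|x_j|$ infinite) we have ${^*}f_m(x) = 0$ for the coordinate-only part; by choosing the $f_m$ appropriately (e.g. $f_m$ increasing to $\mathbbm 1$ pointwise) one gets that $\int_{S_{A_N}} \st({^*}f_m)\,dL\bar\sigma \le L\bar\sigma(\{x : x_j \in {^*}\mathbb{R}_{\text{fin}}\})$ after passing to a limit, while the right-hand integrals $\int f_m \, d\mu \to 1$ by the monotone (or dominated) convergence theorem applied to the genuine probability measure $\mu$ on $\mathbb{R}^k$. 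Hence $L\bar\sigma(\{x : x_j \in {^*}\mathbb{R}_{\text{fin}}\}) \ge 1$, which forces equality.

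The main technical point to be careful about — and the step I'd expect to need the most care — is the mismatch between ``compact support in $\mathbb{R}^k$'' (required to invoke Theorem~\ref{main}) and ``a cutoff that only constrains the single coordinate $x_j$''. A function that is $1$ on an infinite slab $\{|x_j| \le m\}$ does not have compact support, so I must use a genuinely compactly supported $f_m$, say supported in the ball of radius $m$, note that $\st({^*}f_m)$ vanishes on $\{x \in S_{A_N} : |x| \text{ infinite}\}$ and in particular on $\{|x_j| \text{ infinite}\}$, and argue that $\int f_m \, d\mu \to \mu(\mathbb{R}^k) = 1$. Then $\int_{S_{A_N}} \st({^*}f_m)\, dL\bar\sigma \to 1$, but each such integrand is supported (up to $L\bar\sigma$-null sets) inside $\{x : x_1, \dots, x_k \in {^*}\mathbb{R}_{\text{fin}}\}$, so that set has Loeb measure $\ge 1$. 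Writing this cleanly requires only: (i) monotone convergence for $\mu$, (ii) the observation that ${^*}f_m \equiv 0$ off a ${^*}$-bounded set whose standard part is contained in $\{x_j \text{ finite}\}$, and (iii) the $S$-integrability of ${^*}f_m$ from Theorem~\ref{main}'s proof. No new nonstandard machinery beyond what has already been developed is needed; the content is entirely in the reduction to a probability statement about $\mu$ via Theorem~\ref{main}.
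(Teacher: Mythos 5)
Your argument is correct, and the essential idea — apply Theorem~\ref{main} to a sequence of bump functions $f_m \in C_c(\mathbb{R}^k)$, use $S$-integrability to rewrite the $^*$-integral as a Loeb integral, and let $m \to \infty$ to force the Loeb measure of the finite part up to $1$ — is the same as the paper's. The one small difference is organizational: the paper first reduces to the case $k=1$ (arguing that the general case follows because an intersection of finitely many almost-sure events is almost sure) and then tests against a one-variable tent function on $(-m,m)$, whereas you work directly in $\mathbb{R}^k$ with a genuinely compactly supported $f_m$ that is $1$ on a large ball and supported in a slightly larger ball. Your route is in fact slightly cleaner here, since it handles all $k$ coordinates in one stroke and avoids the mild awkwardness of the one-coordinate reduction — a function of $x_j$ alone is not compactly supported on $\mathbb{R}^k$, so the paper's reduction really amounts to invoking the $k'=1$ instance of Theorem~\ref{main} for each coordinate separately, whereas you never have to worry about the mismatch you yourself flagged between ``slab support'' and ``ball support.'' The only thing I'd tidy in your write-up is the middle paragraph, where you toy with several candidate $f_m$ before settling on the correct one; in the final version you should just start from the ball-supported $f_m$, note that $\{\st({^*}f_m) > 0\} \subseteq \{|{^*}\pi_{(k)}(x)| \le m\} \subseteq \{x_1, \ldots, x_k \in {^*}\mathbb{R}_{\text{fin}}\}$, and use $\int f_m\, d\mu \uparrow \mu(\mathbb{R}^k) = 1$ together with continuity of the Loeb measure.
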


\begin{proof}
We prove this for $k = 1$ (the general case follows from the fact that the intersection of finitely many almost sure events is almost sure). For each $m \in \mathbb{N}$, consider the function $f_m$ that is equal to $1$ on $(-m+1, m-1)$, equal to zero on $\mathbb{R} \backslash (-m, m)$, and is linear in between. We thus have
\begin{align*}
    L\bar{\sigma}(x_1 \in {^*}(-m, m))  
    &= \mathbb{E}_{S_{A_N}}(\st(\mathbbm{1}_{{^*}(-m,m)})) 
    \\
    &= \mathbb{E}_{S_{A_N}}(\st({^*}\mathbbm{1}_{(-m,m)})) 
    \\
    &\geq \mathbb{E}_{S_{A_N}}(\st({^*}f_m)) \\
    &\geq \int_{\mathbb{R}^k} f
    _m d\mu. \hspace{4 cm} [\text{using Theorem } \ref{main}] 
\end{align*}
As a consequence, we obtain
\begin{gather*}
    1 \geq L\bar{\sigma}(x_1 \in {^*}\mathbb{R}_{\text{fin}}) = L\bar{\sigma}(\cup_{m \in \mathbb{N}} \{x_1 \in {^*}(-m, m)\}) = \lim_{m \rightarrow \infty} L\bar{\sigma}(x_1 \in {^*}(-m, m)) \\
     \Rightarrow 1 \geq L\bar{\sigma}(x_1 \in {^*}\mathbb{R}_{\text{fin}}) \geq \lim_{m \rightarrow \infty} \int_{\mathbb{R}^k} f
    _m d\mu = 1 \\
    \Rightarrow L\bar{\sigma}(x_1 \in {^*}\mathbb{R}_{\text{fin}}) = 1,
\end{gather*}
thus completing the proof.
\end{proof}

Using Theorem \ref{main} and Theorem \ref{almost all points are finite}, we are now able to generalize the limiting spherical integral result to all bounded continuous functions on $\mathbb{R}^k$.

\begin{theorem}\label{all continuous functions}
Let $f\colon \mathbb{R}^k \rightarrow \mathbb{R}$ be a bounded continuous function. Then
\begin{align*}
    \lim_{n \rightarrow \infty} \int_{S_{A_n}} f d{\bar{\sigma}} = \int_{\mathbb{R}^k} f d{\mu}_{\bar{\eta}, u^{(1)}, \ldots, u^{(\gamma)}}.
\end{align*}
\end{theorem}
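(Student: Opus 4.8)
The plan is to bootstrap from Theorem \ref{main}, which already handles $f \in C_c(\mathbb{R}^k)$, to an arbitrary bounded continuous $f$ by a truncation-and-tails argument, using Theorem \ref{almost all points are finite} to control the mass that escapes to infinity. First I would pass to the nonstandard formulation: since the limit of a sequence, when it exists, equals the standard part of a hyperfinite-indexed term, it suffices to show
\begin{align*}
    \int_{S_{A_N}} \st({^*}f(x))\, dL\bar\sigma(x) = \int_{\mathbb{R}^k} f\, d\mu_{\bar\eta, u^{(1)}, \ldots, u^{(\gamma)}},
\end{align*}
where on the left I use that ${^*}f$ is bounded by a real number, hence $S$-integrable on $S_{A_N}$, so the ${^*}$-integral and the Loeb integral of its standard part agree.

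Next I would introduce cutoffs. For $m \in \mathbb{N}$ let $\chi_m \in C_c(\mathbb{R}^k)$ be a continuous function with $0 \le \chi_m \le 1$, equal to $1$ on the ball of radius $m-1$ and supported in the ball of radius $m$. Then $f\chi_m \in C_c(\mathbb{R}^k)$, so Theorem \ref{main} applies to it and gives
\begin{align*}
    \int_{S_{A_N}} \st({^*}(f\chi_m)(x))\, dL\bar\sigma(x) = \int_{\mathbb{R}^k} f\chi_m\, d\mu.
\end{align*}
I would then estimate the error $\left| \int_{S_{A_N}} \st({^*}f)\, dL\bar\sigma - \int_{S_{A_N}} \st({^*}(f\chi_m))\, dL\bar\sigma \right|$. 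On the region where all first-$k$ coordinates are finite and lie in ${^*}(-(m-1), m-1)$, the integrands ${^*}f$ and ${^*}(f\chi_m)$ agree; off that region the difference is bounded by $2\norm{f}_\infty$ times the Loeb measure of its complement within $S_{A_N}$. By Theorem \ref{almost all points are finite} the points with infinite first-$k$ coordinates are Loeb-null, and by the argument in the proof of that theorem (comparing $L\bar\sigma(x_i \in {^*}(-m,m))$ to a Gaussian integral and taking $m \to \infty$) the Loeb measure of $\{x \in S_{A_N} : x_{(k)} \notin {^*}(-(m-1), m-1)^k\}$ can be made smaller than any prescribed $\varepsilon > 0$ by choosing $m$ large; symmetrically $\int_{\mathbb{R}^k} |f| (1 - \chi_m)\, d\mu \to 0$ as $m \to \infty$ by dominated convergence since $\mu$ is a finite measure. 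Combining the three pieces — the exact identity from Theorem \ref{main} for $f\chi_m$, the spherical tail estimate, and the Gaussian tail estimate — and letting $m \to \infty$ pins down the left-hand side to $\int_{\mathbb{R}^k} f\, d\mu$, giving the result.

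The main obstacle is making the spherical tail estimate uniform and honest: I need that $L\bar\sigma(\{x \in S_{A_N} : |x_i| \ge m-1 \text{ for some } i \le k\})$ is small uniformly, and more importantly that the ${^*}$-integral $\starint_{S_{A_N}} {^*}f\, d{^*}\bar\sigma$ really does equal its Loeb counterpart $\int \st({^*}f)\, dL\bar\sigma$ — this is precisely the $S$-integrability of ${^*}f$, which holds because $f$ is globally bounded by a standard real, so every value ${^*}f(x)$ is finite and the internal integral of a finitely-bounded internal function over a ${^*}$-probability space is $S$-integrable. The one subtlety to check carefully is that the "good set" $\{x \in S_{A_N} : x_{(k)} \in {^*}(-(m-1),m-1)^k\}$ has Loeb measure tending to $1$; this follows by the same mechanism as Theorem \ref{almost all points are finite}, namely bounding its complement's measure above by $\mathbb{E}_{S_{A_N}}(\st({^*}g_m))$ for a suitable $g_m \in C_c$ dominating the relevant indicator, applying Theorem \ref{main} to $g_m$, and sending $m \to \infty$ using that $\mu$ assigns vanishing mass to the complements of large boxes. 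Once that is in place the rest is routine triangle-inequality bookkeeping.
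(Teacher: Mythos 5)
Your proof is correct and takes essentially the same route as the paper: truncate $f$, pass from the ${^*}$-integral to the Loeb integral via $S$-integrability of a finitely bounded internal function, apply Theorem~\ref{main} to the compactly supported truncation, and control the tails with the mechanism of Theorem~\ref{almost all points are finite} on the spherical side and dominated convergence on the Gaussian side. If anything, your use of a \emph{continuous} cutoff $\chi_m$ (so that $f\chi_m \in C_c(\mathbb{R}^k)$, to which Theorem~\ref{main} genuinely applies) is slightly more careful than the paper's write-up, which takes the sharp cutoff $f_m \defeq f\cdot \mathbbm{1}_{[-m,m]^k}$ and applies Theorem~\ref{main} to $f_m$ even though $f_m$ need not be continuous; your version sidesteps that wrinkle at no extra cost.
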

\begin{proof}
Let $f \colon \mathbb{R}^k \rightarrow \mathbb{R}$ be bounded and continuous. For each $m \in \mathbb{N}$, let $f_m$ be the restriction of $f$ to $[-m, m]^k$, i.e., $f_m \defeq f \cdot \mathbbm{1}_{[-m,m]^k}$. Fix $N > \mathbb{N}$. Since $f$ is bounded, $\st({^*}f)$ is $S$-integrable. This shows:

\begin{align}\label{main result first}
    \starint_{S_{A_N}} {^*}f d\bar{\sigma} \approx \int_{S_{A_N}} \st({^*}f) dL\bar{\sigma}. 
\end{align}

Using Theorem \ref{almost all points are finite} and applying dominated convergence theorem, we obtain:
\begin{align}\label{main result second}
    \int_{S_{A_N}} \st({^*}f) dL\bar{\sigma} = \lim_{m \rightarrow \infty} \int_{S_{A_N}} \st({^*}f_m) dL\bar{\sigma}.
\end{align}

The right side of \eqref{main result second} equals $\lim_{m \rightarrow \infty} \int_{\mathbb{R}^k} f_m (x) d\mu_{\bar{\eta}, u^{(1)}, \ldots, u^{(\gamma)}}$ using Theorem \ref{main}. Thus dominated convergence theorem and \eqref{main result first} now completes the proof. 
\end{proof}
\end{section}

\begin{appendix}
\section{Some results on linear independence}
\renewcommand\thesection{A}
\begin{lemma}\label{independence}
    Let $u^{(1)}, \ldots, u^{(\gamma)}$ be $\mathbb{R}$-linearly independent vectors in $\mathbb{R}^{\mathbb{N}}$. For all $K > \mathbb{N}$, $(u^{(1)})_{(K)}$, $\ldots, (u^{(\gamma)})_{(K)}$ are ${^*}\mathbb{R}$-linearly independent. As a consequence, $(u^{(1)})_{(m)}, \ldots, (u^{(\gamma)})_{(m)}$ are $\mathbb{R}$-linearly independent for all large $m \in \mathbb{N}$.
    \end{lemma}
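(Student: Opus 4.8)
The plan is to prove the first assertion (for hyperfinite $K$) by contradiction using a normalization-and-standard-part argument, and then to read off the ``consequence'' from it via a short overflow argument, keeping the whole proof within the nonstandard framework.

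First I would fix $K > \mathbb{N}$ and suppose, for contradiction, that $(u^{(1)})_{(K)}, \ldots, (u^{(\gamma)})_{(K)}$ are ${^*}\mathbb{R}$-linearly dependent, say $\sum_{i=1}^{\gamma} c_i (u^{(i)})_{(K)} = \vec 0$ in ${^*}\mathbb{R}^K$ with the $c_i \in {^*}\mathbb{R}$ not all zero. Since the $c_i$ may be infinite or infinitesimal, the crucial normalization is to choose $j$ with $\abs{c_j} = \max_{1 \le i \le \gamma} \abs{c_i} > 0$ and set $d_i \defeq c_i/c_j$; then $d_j = 1$, $\abs{d_i} \le 1$ for all $i$ (so each $d_i$ is finite), and $\sum_{i=1}^{\gamma} d_i (u^{(i)})_{(K)} = \vec 0$ still holds. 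Put $\bar d_i \defeq \st(d_i) \in \mathbb{R}$, so that $\bar d_j = 1$ and the $\bar d_i$ are not all zero. Now for each standard $n \in \mathbb{N}$ we have $n < K$, and the $n$-th coordinate of the displayed relation is the exact equality $\sum_{i=1}^{\gamma} d_i\, u^{(i)}_n = 0$ (using that ${^*}u^{(i)}_n = u^{(i)}_n$ for standard $n$); taking standard parts of this finite sum of finite quantities yields $\sum_{i=1}^{\gamma} \bar d_i\, u^{(i)}_n = 0$ for every $n$, i.e. $\sum_{i=1}^{\gamma} \bar d_i\, u^{(i)} = \vec 0$ in $\mathbb{R}^{\mathbb{N}}$ --- contradicting the $\mathbb{R}$-linear independence of the $u^{(i)}$.

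For the consequence, I would consider the internal set $D \defeq \{ m \in {^*}\mathbb{N} : (u^{(1)})_{(m)}, \ldots, (u^{(\gamma)})_{(m)} \text{ are } {^*}\mathbb{R}\text{-linearly dependent} \}$. The first part shows $D$ contains no $m > \mathbb{N}$; were $D$ unbounded in $\mathbb{N}$, overflow would force $D$ to contain an element $> \mathbb{N}$, so $D$ is bounded above by some $m_0 \in \mathbb{N}$. Hence $(u^{(1)})_{(m)}, \ldots, (u^{(\gamma)})_{(m)}$ are $\mathbb{R}$-linearly independent for all $m > m_0$, which is the asserted ``for all large $m$''.

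The only delicate point is the normalization in the contradiction step: one must divide by a coefficient of maximal modulus \emph{before} passing to standard parts, since otherwise a set of coefficients that is entirely infinite or entirely infinitesimal would degenerate to the trivial relation upon taking standard parts and produce no contradiction. Everything else is routine bookkeeping. (One could instead prove the standard consequence directly --- an infinite real matrix of rank $\gamma$ has an invertible $\gamma\times\gamma$ submatrix, and the largest of its column indices serves as $m_0$ --- and then obtain the hyperfinite statement by transfer; I prefer the argument above as it is uniformly nonstandard.)
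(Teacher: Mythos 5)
Your proof is correct and follows essentially the same route as the paper's: suppose a nontrivial ${^*}\mathbb{R}$-linear relation among the hyperfinite truncations, normalize by dividing through by a coefficient of maximal modulus so that one normalized coefficient has modulus exactly $1$ while all are finite, take standard parts coordinate-by-coordinate along $\mathbb{N}$, and contradict the standard linear independence. Your handling of the ``consequence'' via the contrapositive of overflow is logically the same as the paper's appeal to underflow; the one step you leave implicit (which the paper spells out) is that for standard $m \in \mathbb{N}$ the vectors $(u^{(i)})_{(m)}$ live in $\mathbb{R}^m$, so ${^*}\mathbb{R}$-linear independence there yields $\mathbb{R}$-linear independence by transfer.
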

\begin{proof}
Fix $M > \mathbb{N}$. Suppose, if possible, that $(u^{(1)})_{(M)}$, $\ldots, (u^{(\gamma)})_{(M)}$ are not ${^*}\mathbb{R}$-linearly independent. Then there exist $a_1, \ldots, a_{\gamma} \in {^*}\mathbb{R}$ such that not all the $a_i$ are zero and 
\begin{align}\label{independence equality}
    \sum_{i = 1}^{\gamma}a_i (u^{(i)})_{(M)} = \mathbf{0} \in {^*}\mathbb{R}^M.
\end{align}

Let $a = \max_{i \in \{1, \ldots, \gamma\}}\abs{a_i}$. Since the $a_i$ are not all equal to zero, we have $a > 0$. For each $i \in \{1, \ldots, \gamma\}$, let $b_i = \frac{a_i}{a}$. Divide both sides of (\ref{independence equality}) by $a$ to get
\begin{align}\label{independence equality 2}
    \sum_{i = 1}^{\gamma}b_i (u^{(i)})_{(M)} = \mathbf{0} \in {^*}\mathbb{R}^M.
\end{align}

All the $b_i$ are bounded above in absolute value by $1$. Thus taking standard parts along the coordinates in $\mathbb{N}$ on both sides of (\ref{independence equality 2}) gives
\begin{align*}
    \sum_{i =1}^{\gamma} \st(b_i) u^{(i)} = \mathbf{0} \in \mathbb{R}^{\mathbb{N}}. 
\end{align*}

Since  $u^{(1)}, \ldots, u^{(\gamma)}$ are $\mathbb{R}$-linearly independent, it follows that $\st(b_i) = 0$ for all $i \in \{1, \ldots, \gamma\}$. But this contradicts the fact that $\abs{b_i} = 1$ for at least one $i$ (namely for that index $i$ which makes $\abs{a_i}$ maximum).

Hence $(u^{(1)})_{(M)}$, $\ldots, (u^{(\gamma)})_{(M)}$ are ${^*}\mathbb{R}$-linearly independent for all $M > \mathbb{N}$. By underflow, $(u^{(1)})_{(m)}$, $\ldots, (u^{(\gamma)})_{(m)}$ are ${^*}\mathbb{R}$-linearly independent for all large $m \in \mathbb{N}$. But if $m \in \mathbb{N}$, then $(u^{(1)})_{(m)}$, $\ldots, (u^{(\gamma)})_{(m)}$ are all vectors in $\mathbb{R}^m$, so that the ${^*}\mathbb{R}$-linear independence of these vectors implies that they are also $\mathbb{R}$-linearly independent by transfer.
\end{proof}

It is interesting to note that Lemma \ref{independence} is a special case of a more general result on linear independence in infinite-dimensional functional spaces, which we include below. Techniques of this nature have been used in the past in the works of Ross. See \cite[Theorem 3]{Ross_linear} for a related idea used to give a nonstandard proof of the Riesz Representation Theorem.

\begin{theorem}
Let $X$ be an infinite set and let $F(X, \mathbb{R})$ denote the vector space of functions from $X$ to $\mathbb{R}$. Suppose $f_1, \ldots, f_{\gamma}$ are linearly independent in $F(X, \mathbb{R})$. Then there is a number $m_0 \in \mathbb{N}$ such that for all $m \in \mathbb{N}_{> m_0}$, there are points $x_1, \ldots, x_m \in X$ for which the vectors $\left(f_i(x_1), \ldots, f_i(x_m)\right)_{i\in \{1, \ldots, \gamma\}}$ are linearly independent in $\mathbb{R}^m$. 
\end{theorem}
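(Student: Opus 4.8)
The plan is to reduce the assertion to a purely finite-dimensional fact: that one can choose $\gamma$ points $x_1,\dots,x_\gamma\in X$ for which the $\gamma\times\gamma$ matrix $\bigl[f_i(x_j)\bigr]_{1\le i,j\le\gamma}$ is nonsingular. Granting this, I would take $m_0=\gamma-1$. For any $m>m_0$, since $X$ is infinite I can extend the chosen $x_1,\dots,x_\gamma$ by arbitrary further points $x_{\gamma+1},\dots,x_m\in X$; the $\gamma\times m$ matrix $\bigl[f_i(x_j)\bigr]_{i\le\gamma,\,j\le m}$ then contains the nonsingular block as its first $\gamma$ columns, so it has rank $\gamma$, and hence its rows are linearly independent in $\mathbb{R}^m$. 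Spelled out: any relation $\sum_{i=1}^\gamma c_i\bigl(f_i(x_1),\dots,f_i(x_m)\bigr)=0$ restricts in the first $\gamma$ coordinates to $\sum_{i=1}^\gamma c_i\bigl(f_i(x_1),\dots,f_i(x_\gamma)\bigr)=0$, which forces all $c_i=0$ by nonsingularity of the block. This is exactly the desired conclusion, with the uniform threshold $m_0$.

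For the reduction step I would use a greedy dimension-drop argument (an induction on $\gamma$ via cofactor expansion of the determinant works equally well, but this formulation keeps the bookkeeping transparent). Let $V=\mathrm{span}(f_1,\dots,f_\gamma)\subseteq F(X,\mathbb{R})$, a $\gamma$-dimensional subspace, and for $x\in X$ write $\mathrm{ev}_x\colon V\to\mathbb{R}$ for evaluation at $x$. Because every nonzero element of $V$ is an honest nonzero function on $X$, we have $\bigcap_{x\in X}\ker(\mathrm{ev}_x)=\{0\}$. I construct the points inductively: suppose $x_1,\dots,x_j$ have been chosen so that $W_j\defeq\bigcap_{l\le j}\ker(\mathrm{ev}_{x_l})$ has dimension $\gamma-j$; if $j<\gamma$ then $W_j\neq\{0\}$, so some $g\in W_j$ is nonzero as a function, and hence for a suitable $x_{j+1}\in X$ the functional $\mathrm{ev}_{x_{j+1}}$ does not vanish identically on $W_j$, whence $W_{j+1}=\ker(\mathrm{ev}_{x_{j+1}}\restriction W_j)$ has dimension $\gamma-j-1$. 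After $\gamma$ steps $W_\gamma=\{0\}$, so the linear map $V\to\mathbb{R}^\gamma$, $g\mapsto\bigl(g(x_1),\dots,g(x_\gamma)\bigr)$, is injective, hence (both spaces being $\gamma$-dimensional) an isomorphism. Applying it to the basis $f_1,\dots,f_\gamma$ shows the rows $\bigl(f_i(x_1),\dots,f_i(x_\gamma)\bigr)$ are linearly independent, i.e.\ $\bigl[f_i(x_j)\bigr]$ is nonsingular, as required.

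There is no genuinely hard step here, so the ``main obstacle'' is really just a matter of care on two points. First, in the dimension-drop one must check that the restricted evaluation functional drops the dimension by \emph{exactly} one at each stage, which is automatic: it is a \emph{nonzero} linear functional on $W_j$ whenever $W_j\neq\{0\}$, precisely because elements of $V$ are functions on $X$. Second, one should observe that the theorem is genuinely equivalent to the $\gamma$-point statement — for $m<\gamma$ no configuration of $m$ points can work, since $\gamma$ vectors in $\mathbb{R}^m$ are always dependent, while enlarging a working configuration preserves linear independence — and it is this equivalence that makes the single threshold $m_0=\gamma-1$ appropriate. I would close by remarking that taking $X=\mathbb{N}$ and $f_i(j)=(u^{(i)})_j$ recovers Lemma \ref{independence}.
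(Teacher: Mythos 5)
Your proof is correct, but it takes a genuinely different route from the paper's. The paper proves this theorem with nonstandard machinery: it builds a hyperfinite set $F_0$ with $X\subseteq F_0\subseteq{^*}X$ via saturation, shows that the ${^*}$restrictions ${^*}f_1|_{F_0},\ldots,{^*}f_\gamma|_{F_0}$ are ${^*}\mathbb{R}$-linearly independent by the same divide-by-the-largest-coefficient-and-take-standard-parts trick used in Lemma \ref{independence}, and then pulls the conclusion back down to $\mathbb{N}$ by underflow and transfer. Your argument is entirely standard: you observe that $V=\mathrm{span}(f_1,\ldots,f_\gamma)$ is a $\gamma$-dimensional space of functions whose evaluation functionals $\mathrm{ev}_x$ have trivial common kernel, greedily choose $\gamma$ points so that the joint evaluation map $V\to\mathbb{R}^\gamma$ is injective (dimension drops by exactly one at each step because a nonzero element of $W_j$ is a nonzero function), and then pad with arbitrary extra points, since a $\gamma\times m$ matrix containing a nonsingular $\gamma\times\gamma$ block has independent rows. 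The trade-offs are worth noting: your argument is shorter, self-contained, and yields the sharp explicit threshold $m_0=\gamma-1$, which the saturation/underflow route does not produce (underflow only guarantees \emph{some} $m_0\in\mathbb{N}$). The paper's proof, on the other hand, is deliberately cast in the nonstandard idiom to match the rest of the paper and to illustrate the Ross-style technique the author cites; it also more directly exhibits the hyperfinite statement (${^*}\mathbb{R}$-linear independence of hyperfinite truncations) that is actually used in Section 3, whereas from your version one would still need to invoke transfer to get back the hyperfinite form of Lemma \ref{independence}.
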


\begin{remark}
If $X$ is countable, say, $X = \mathbb{N}$ (in which case $F(X, \mathbb{R})$ is just the vector space of sequences of real numbers), then for any linearly independent vectors $u^{(1)}, \ldots, u^{(\gamma)}$, there is an $m_0 \in \mathbb{N}$ such that for all $m \in \mathbb{N}_{>m_0}$, the vectors $\left({u^{(i)}}_{1}, \ldots, {u^{(i)}}_m\right)_{i\in \{1, \ldots, \gamma\}}$ are linearly independent in $\mathbb{R}^m$. In this sense, Lemma \ref{independence} is a corollary of this theorem.
\end{remark}

\begin{proof}
For brevity, we will write $f_i(x_1, \ldots, x_m)$ to denote $f_i((x_1), \ldots, f_i(x_m))$. Let $F_0$ be a hyperfinite set such that $X \subseteq F_0 \subseteq {^*}X$. One obtains $F_0$ via saturation as an element of the following set:
$$\cap_{x \in X} \{F \in {^*}\mathcal{P}_{\text{fin}}(X): x \in F\}.$$

Suppose that the internal cardinality of $F_0$ is $\abs{F_0} = N$. Since $X$ is infinite, there is an injective map $s\colon \mathbb{N} \rightarrow X$. Extend this map to get a hyperfinite sequence $(x_i)_{i \in {^*}\mathbb{N}}$ of distinct elements in ${^*}X$ (by taking $x_i \defeq {^*}s(i)$). For each natural number $m$, we let $[m]$ denote the set $\{1, \ldots, m\}$. Also, for two subsets $A, B$ in the standard universe, we let $\text{Bij}(A, B)$ denote the set of bijections between $A$ and $B$ (so it is empty if $A$ and $B$ have different cardinalities). Consider the following internal set:
$$\mathcal{G} \defeq \left\{m \in {^*}\mathbb{N}: \exists \phi \in {^*}\text{Bij}([N], F_0) \text{ such that } {^*}s|_{[m]} = \phi|_{[m]}\right\}.$$

Clearly, $\mathcal{G}$ contains $\mathbb{N}$ and hence contains an $M > \mathbb{N}$ by overflow. Let $\phi$ be the bijection that witnesses the inclusion of $M$ in $\mathcal{G}$. Let $y_i = \phi(i) (= x_i)$ for all $i \in [M]$. Extend $\phi$ (by transfer of the fact that any injective map from an initial set of $\mathbb{N}$ to $X$ can be extended to an injective map from $\mathbb{N}$ to $X$) to an internal injective map $\Phi\colon {^*}\mathbb{N} \rightarrow {^*}X$, and still call $y_i = \Phi(i)$ for all $i \in {^*}\mathbb{N}$. Recall that $y_i = x_i$ for all $i \in [M]$ (in particular for all $i \in \mathbb{N}$).

We claim that $\left({^*} f_i(y_1), \ldots, {^*}f_i(y_N)\right)_{i \in [\gamma]}$ are ${^*}\mathbb{R}$-linearly independent in ${^*}\mathbb{R}^N$. For if not, then there exist $a_1, \ldots, a_{\gamma} \in {^*}\mathbb{R}$, not all zero, such that $\sum_{i \in [\gamma]} a_i {^*}f_i |_{F_0} = 0$. As in the proof of Lemma \ref{independence}, we divide both sides by $\max\{\abs{a_1}, \ldots, \abs{a_{\gamma}}\}$ and restrict the functions to $X$ to get a contradiction to the linear independence of $f_1, \ldots, f_{\gamma}$. 

Since $N > \mathbb{N}$ was arbitrary, the following internal set contains ${^*}\mathbb{N} \backslash \mathbb{N}$:
$$\{n \in {^*}\mathbb{N}: ~\exists A \in {^*}\mathcal{P}_{\text{fin}}(X) [(\abs{A} = n) \land (f_1 |_A, \ldots, f_{\gamma}|_A ~ {^*}\mathbb{R}\text{-linearly independent)}]\}.$$

By underflow, there is an $m_0 \in \mathbb{N}$ in this set. For any $m \in \mathbb{N}_0$, the transfer of the following sentence completes the proof of the first part of this theorem:
$$\exists A \in {^*}\mathcal{P}_{\text{fin}}(X) [(\abs{A} = n) \land (f_1 |_A, \ldots, f_{\gamma}|_A \text{ are } {^*}\mathbb{R}\text{-linearly independent)}].$$

For the second part of the theorem, replace $A$ by $[n]$ in the above underflow argument and then proceed as before. 
\end{proof}

\renewcommand\thesection{B}
\section{Working with infinitesimally separated linear spaces}
In an internal inner product space $V$ (over ${^*}\mathbb{R}$ or ${^*}\mathbb{C}$), a collection of vectors $\mathcal{V}$ is said to satisfy the \textit{separation property (SP)} if the following holds:
\begin{align}\label{easier separation property}
    \text{For any } v \in \mathcal{V}, ~  \norm{v - P_{\text{span}{(\mathcal{V}\backslash \{v\})}} (v)} \not\approx 0.
\end{align}

Here, for a subspace $H$, the vector $P_H(v)$ denotes the orthogonal projection of the vector $v$ onto $H$. The following equivalent version of SP is more convenient for our applications (the equivalence follows from the linear algebraic fact that distance of a vector from its projection onto a larger subspace cannot be bigger than the distance from its projection onto a smaller subspace). 

\begin{align}\label{defining separation property}
    \text{For any } v \in \mathcal{V} \text{ and any subcollection } \mathcal{V}' \subseteq \mathcal{V} \backslash \{v\} : ~  \norm{v - P_{\text{span}{(\mathcal{V}')}} (v)} \not\approx 0.
\end{align}

When working with spheres intersected by hyperplanes, we often need to orthonormalize different sets of linearly independent vectors (corresponding to two different hyperplanes). If two such sets of vectors can be matched with each other in the sense that any pair is only infinitesimally apart, then we can make such a matching with their orthonormalizations as well, provided the original set of vectors satisfies the Separation Property (this is proved in Theorem \ref{Rotation Claim duplicate}). 

We first prove a preliminary result that shows that any collection of vectors satisfying SP must be linearly independent. Note that the converse is not true---one could take vectors $\{e_1, \epsilon e_2\}$, or $\{e_1, e_1 + \epsilon e_2\}$ in ${^*}\mathbb{R}^2$, where $\epsilon$ is an infinitesimal. In what follows, we call a vector $v$ \textit{infinitesimal} if $\norm{v} \approx 0$.

\begin{proposition}\label{SP implies LI}
Suppose a collection of vectors $\mathcal{V}$ satisfies SP. Then $\mathcal{V}$ does not contain any infinitesimal. Furthermore, $\mathcal{V}$ is ${^*}\mathbb{R}$-linearly independent.  
\end{proposition}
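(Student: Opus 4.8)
The plan is to read off both claims directly from the definition of SP, using only the elementary fact---valid in an internal inner product space $V$ by transfer of the standard finite-dimensional statement---that the orthogonal projection of a vector onto a subspace is the point of that subspace nearest to the given vector.

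First I would dispatch the claim that $\mathcal{V}$ contains no infinitesimal. Fix $v \in \mathcal{V}$. Since $0 \in \text{span}(\mathcal{V}\setminus\{v\})$, the nearest-point property gives $\norm{v - P_{\text{span}(\mathcal{V}\setminus\{v\})}(v)} \le \norm{v - 0} = \norm{v}$; as SP (in the form \eqref{easier separation property}) makes the left-hand side non-infinitesimal, so is $\norm{v}$. This is a one-line argument.

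For the linear independence I would argue by contradiction. If $\mathcal{V}$ were ${}^*\mathbb{R}$-linearly dependent, it would contain distinct vectors $v^{(1)}, \ldots, v^{(\ell)}$ with $\sum_{i=1}^{\ell} a_i v^{(i)} = 0$ for some $a_i \in {}^*\mathbb{R}$ not all zero. Choosing $j$ with $a_j \neq 0$ and solving, $v^{(j)} = -a_j^{-1}\sum_{i \neq j} a_i v^{(i)} \in \text{span}(\mathcal{V}\setminus\{v^{(j)}\})$, so the orthogonal projection fixes $v^{(j)}$ and $\norm{v^{(j)} - P_{\text{span}(\mathcal{V}\setminus\{v^{(j)}\})}(v^{(j)})} = 0$, contradicting \eqref{easier separation property}. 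Hence every finite subcollection of $\mathcal{V}$ is independent, which is what linear independence means.

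I do not anticipate a genuine obstacle here: the only points needing care are that SP must be applied with the \emph{full} span $\text{span}(\mathcal{V}\setminus\{v\})$ rather than some proper subcollection, and that the ``projection $=$ nearest point'' fact be used in the internal setting via transfer. This last fact is precisely what underlies the equivalence of \eqref{easier separation property} and \eqref{defining separation property} already recorded above, so one could alternatively just cite that equivalence and use the empty subcollection for the first assertion.
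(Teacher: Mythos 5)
Your proof is correct and follows essentially the same route as the paper's. The only cosmetic difference is in the first claim: the paper bounds $\norm{v - P_{\text{span}(\mathcal{V}\setminus\{v\})}(v)}$ by $2\norm{v}$ via the triangle inequality and the fact that projections have operator norm at most $1$, whereas you get the sharper bound $\norm{v}$ directly from the nearest-point characterization of projections; both are immediate, and your argument for linear independence is identical to the paper's.
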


\begin{proof}
The first part follows from the fact that any orthogonal projection operator has norm at most $1$. Indeed, for any $v \in \mathcal{V}$,
$$\norm{v - P_{\text{span}(\mathcal{V} \backslash \{v\})}(v)} \leq \norm{v} + \norm{P_{\text{span}(\mathcal{V} \backslash \{v\})}(v)} \leq 2\norm{v},$$
which would be infinitesimal if $v$ is an infinitesimal vector. 

Now if $\mathcal{V}$ were not linearly independent, then there would exist a vector $v \in \mathcal{V}$ which could be written as a linear combination of vectors from some subcollection $\mathcal{V}' \subseteq V \backslash \{v\}$. But then we would have $P_{\text{span}(\mathcal{V}')}(v) = v$, violating the Separation Property, since we have already shown $v$ to be non-infinitesimal.
\end{proof}

\begin{theorem}\label{Rotation Claim duplicate} Let $V$ be an internal inner product space. Let $\gamma \in \mathbb{N}$. For each $i \in \{1, \ldots, \gamma\}$, let $v^{(i)}, v'^{(i)} \in V$ be such that the following conditions hold: 
\begin{enumerate}[(i)]
    \item\label{rotation 1}  The collections $\{v^{(1)}, \ldots, v^{(\gamma)}\}$ and $\{v'^{(1)}, \ldots, v'^{(\gamma)}\}$ both satisfy the Separation Property. .  
      \item\label{rotation 2} $\norm{v^{(i)}}, \norm{v'^{(i)}} \in {^*}\mathbb{R}_{\text{fin}}.$
    \item\label{rotation 3} $\norm{v^{(i)} - v'^{(i)}} \approx 0.$
\end{enumerate}
Then there exist orthonormal sets $\{w^{(1)}, \ldots, w^{(\gamma)}\}$ and $\{z^{(1)}, \ldots, z^{(\gamma)}\}$ with the following properties:
\begin{enumerate}
    \item \label{SP1} For any $i \in \{1, \ldots, \gamma\}$, we have 
    \begin{align*}
    \text{span}(v^{(1)}, \ldots, v^{(i)}) &= \text{span}(w^{(1)}, \ldots, w^{(i)}), \\
    \text{ and } \text{span}(v'^{(1)}, \ldots, v'^{(i)}) &= \text{span}(z^{(1)}, \ldots, z^{(i)}).
    \end{align*}
    
    \item\label{SP2} For all $i \in \{1, \ldots, \gamma\}$, we have $\norm{w^{(i)} - z^{(i)}} \approx 0$.
\end{enumerate}
\end{theorem}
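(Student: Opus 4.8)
The plan is to produce $\{w^{(1)},\ldots,w^{(\gamma)}\}$ and $\{z^{(1)},\ldots,z^{(\gamma)}\}$ by running the Gram--Schmidt algorithm on $\{v^{(1)},\ldots,v^{(\gamma)}\}$ and on $\{v'^{(1)},\ldots,v'^{(\gamma)}\}$ respectively. This is legitimate: by Proposition \ref{SP implies LI}, each of the two families, satisfying SP, is ${^*}\mathbb{R}$-linearly independent, so Gram--Schmidt never divides by zero. With this choice, property \eqref{SP1} is automatic, since Gram--Schmidt preserves the whole flag of partial spans. All the work goes into property \eqref{SP2}, which I would prove by induction on $i$, the Separation Property entering precisely to force the successive normalizing denominators to be appreciable (finite and non-infinitesimal), so that infinitesimal errors in the numerators are not magnified.

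For the base case, $w^{(1)} = v^{(1)}/\norm{v^{(1)}}$ and $z^{(1)} = v'^{(1)}/\norm{v'^{(1)}}$; hypothesis \eqref{rotation 2} gives $\norm{v^{(1)}} \in {^*}\mathbb{R}_{\text{fin}}$ and SP (with the empty subcollection) gives $\norm{v^{(1)}} \not\approx 0$, so together with $v^{(1)} \approx v'^{(1)}$ we get $w^{(1)} \approx z^{(1)}$. For the inductive step, assume $w^{(j)} \approx z^{(j)}$ for all $j < i$, and set
\[
 \tilde w^{(i)} \defeq v^{(i)} - \sum_{j < i} \inp{v^{(i)}}{w^{(j)}} w^{(j)}, \qquad \tilde z^{(i)} \defeq v'^{(i)} - \sum_{j < i} \inp{v'^{(i)}}{z^{(j)}} z^{(j)},
\]
so that $w^{(i)} = \tilde w^{(i)}/\norm{\tilde w^{(i)}}$ and $z^{(i)} = \tilde z^{(i)}/\norm{\tilde z^{(i)}}$. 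First, $\tilde w^{(i)} \approx \tilde z^{(i)}$: each coefficient difference $\inp{v^{(i)}}{w^{(j)}} - \inp{v'^{(i)}}{z^{(j)}}$ splits as $\inp{v^{(i)} - v'^{(i)}}{w^{(j)}} + \inp{v'^{(i)}}{w^{(j)} - z^{(j)}}$, which is infinitesimal by Cauchy--Schwarz together with \eqref{rotation 2}, \eqref{rotation 3} and the inductive hypothesis; plugging this (and $\norm{w^{(j)}} = 1$, $\inp{v'^{(i)}}{z^{(j)}} \in {^*}\mathbb{R}_{\text{fin}}$) into $\tilde w^{(i)} - \tilde z^{(i)}$ term by term exhibits it as a finite sum of infinitesimals.

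The crux is the lower bound on $\norm{\tilde w^{(i)}}$. Since $w^{(1)},\ldots,w^{(i-1)}$ are orthonormal and span the same subspace as $v^{(1)},\ldots,v^{(i-1)}$ by \eqref{SP1}, we have $\tilde w^{(i)} = v^{(i)} - P_{\text{span}(v^{(1)},\ldots,v^{(i-1)})}(v^{(i)})$, so $\norm{\tilde w^{(i)}} \not\approx 0$ is exactly the instance of \eqref{defining separation property} with $v = v^{(i)}$ and $\mathcal{V}' = \{v^{(1)},\ldots,v^{(i-1)}\}$; moreover $\norm{\tilde w^{(i)}} \leq \norm{v^{(i)}} \in {^*}\mathbb{R}_{\text{fin}}$, so $\norm{\tilde w^{(i)}}$ is appreciable, and likewise $\norm{\tilde z^{(i)}}$ (using SP for the primed family). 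Then $\tilde w^{(i)} \approx \tilde z^{(i)}$ forces $\norm{\tilde w^{(i)}} \approx \norm{\tilde z^{(i)}}$, and the standard manipulation
\[
 w^{(i)} - z^{(i)} = \frac{\tilde w^{(i)} - \tilde z^{(i)}}{\norm{\tilde w^{(i)}}} + \tilde z^{(i)}\left(\frac{1}{\norm{\tilde w^{(i)}}} - \frac{1}{\norm{\tilde z^{(i)}}}\right)
\]
writes $w^{(i)} - z^{(i)}$ as a sum of two infinitesimals (infinitesimal over appreciable; finite times infinitesimal), closing the induction.

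The only genuine obstacle is the one just isolated: without a non-infinitesimal lower bound on each Gram--Schmidt denominator, normalizing could convert the infinitesimal gap between $v^{(i)}$ and $v'^{(i)}$ (and the earlier gaps $w^{(j)} - z^{(j)}$) into a gap of appreciable size. The Separation Property is exactly the hypothesis engineered to prevent this — which is why it, rather than mere linear independence, is assumed — and everything else is routine bookkeeping with Cauchy--Schwarz and the finiteness of the norms.
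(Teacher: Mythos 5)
Your proposal is correct and takes essentially the same route as the paper: Gram--Schmidt on both families (which is legitimate since SP implies linear independence), the span condition coming for free, and the closeness condition by induction, with the Separation Property supplying the appreciable lower bounds on the normalizing denominators that prevent infinitesimal gaps from being blown up. Your organization of the inductive step via the unnormalized vectors $\tilde w^{(i)}, \tilde z^{(i)}$ and the identity for $w^{(i)} - z^{(i)}$ is a slightly cleaner bookkeeping of the same estimates the paper carries out explicitly for $i = 1, 2$.
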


\begin{proof}
Use the Gram-Schmidt algorithm on $\{v^{(1)}, \ldots, v^{(\gamma)}\}$ and $\{v'^{(1)}, \ldots, v'^{(\gamma)}\}$ to obtain $\{w^{(1)}, \ldots, w^{(\gamma)}\}$ and $\{z^{(1)}, \ldots, z^{(\gamma)}\}$ respectively. We thus have:
\begin{align} \label{Gram-Schmidt 1}
           w^{(1)} &\defeq \frac{v^{(1)}}{\norm{v^{(1)}}}, \\   
         \nonumber  w^{(2)} &\defeq \frac{v^{(2)} - {\langle v^{(2)}, w^{(1)}\rangle}w^{(1)}}{\norm{v^{(2)} - {\langle v^{(2)}, w^{(1)}\rangle}w^{(1)}}}, \\
         \nonumber w^{(3)} &\defeq \frac{v^{(3)} - {\langle v^{(3)}, w^{(1)}\rangle}w^{(1)} - {\langle v^{(3)}, w^{(2)}\rangle}w^{(2)}}{\norm{v^{(3)} - {\langle v^{(3)}, w^{(1)}\rangle}w^{(1)} - {\langle v^{(3)}, w^{(2)}\rangle}w^{(2)}}} ,\\
         \nonumber  &\vdots \\
         \nonumber  w^{(\gamma)} &\defeq \frac{v^{(\gamma)} - {\langle v^{(\gamma)}, w^{(1)}\rangle}w^{(1)} - \ldots - {\langle v^{(\gamma)}, w^{(\gamma - 1)}\rangle}w^{(\gamma - 1)}}{\norm{v^{(\gamma)} - {\langle v^{(\gamma)}, w^{(1)}\rangle}w^{(1)} - \ldots - {\langle v^{(\gamma)}, w^{(\gamma - 1)}\rangle}w^{(\gamma - 1)}}}, 
\end{align}
and
\begin{align} \label{Gram-Schmidt 2}
         z^{(1)} &\defeq \frac{v'^{(1)}}{\norm{v'^{(1)}}}, \\   
         \nonumber  z^{(2)} &\defeq \frac{v'^{(2)} - {\langle v'^{(2)}, z^{(1)}\rangle}z^{(1)}}{\norm{v'^{(2)} - {\langle v'^{(2)}, z^{(1)}\rangle}z^{(1)}}}, \\
         \nonumber z^{(3)} &\defeq \frac{v'^{(3)} - {\langle v'^{(3)}, z^{(1)}\rangle}z^{(1)} - {\langle v'^{(3)}, z^{(2)}\rangle}z^{(2)}}{\norm{v'^{(3)} - {\langle v'^{(3)}, z^{(1)}\rangle}z^{(1)} - {\langle v'^{(3)}, z^{(2)}\rangle}z^{(2)}}} ,\\
         \nonumber  &\vdots \\
         \nonumber  z^{(\gamma)} &\defeq \frac{v'^{(\gamma)} - {\langle v'^{(\gamma)}, z^{(1)}\rangle}z^{(1)} - \ldots - {\langle v'^{(\gamma)}, z^{(\gamma - 1)}\rangle}z^{(\gamma - 1)}}{\norm{v'^{(\gamma)} - {\langle v'^{(\gamma)}, z^{(1)}\rangle}z^{(1)} - \ldots - {\langle v'^{(\gamma)}, z^{(\gamma - 1)}\rangle}z^{(\gamma - 1)}}}.
\end{align}

These sets  $\{w^{(1)}, \ldots, w^{(\gamma)}\}$ and $\{z^{(1)}, \ldots, z^{(\gamma)}\}$ of internally orthonormal vectors satisfy \eqref{SP1} by construction. Therefore we need to only verify \eqref{SP2}. The proof of \eqref{SP2} will be done by induction on $i$. Observe that for $i = 1$, we have:
\begin{align}
    \norm{w^{(1)} - z^{(1)}} &= \norm{\frac{{\norm{v'^{(1)}}v^{(1)} - \norm{v^{(1)}}v'^{(1)}}}{\norm{v^{(1)}}\cdot\norm{v'^{(1)}}}} \nonumber \\
    &= \frac{\norm{v^{(1)} - \frac{\norm{v^{(1)}}}{\norm{v'^{(1)}}}v'^{(1)}}}{\norm{v^{(1)}}} \nonumber \\
    &\approx \frac{\norm{v^{(1)} - 1 \cdot v'^{(1)}}}{\st\left(\norm{v^{(1)}}\right)} \nonumber\\
    &\approx 0 \label{base of induction}.
\end{align}

Similarly, 
\begin{align}
      \norm{w^{(2)} - z^{(2)}} &=  \norm{ \frac{v^{(2)} - {\langle v^{(2)}, w^{(1)}\rangle}w^{(1)}}{\norm{v^{(2)} - {\langle v^{(2)}, w^{(1)}\rangle}w^{(1)}}} -  \frac{v'^{(2)} - {\langle v'^{(2)}, z^{(1)}\rangle}z^{(1)}}{\norm{v'^{(2)} - {\langle v'^{(2)}, z^{(1)}\rangle}z^{(1)}}}} \nonumber\\
      &= \frac{\norm{\alpha v^{(2)} - \alpha \inp*{v^{(2)}}{w^{(1)}}w^{(1)} - \beta v'^{(2)} + \beta \inp*{v'^{(2)}}{z^{(1)}}z^{(1)}}}{\alpha \beta} \label{second step},
\end{align}
where $\alpha = \norm{v'^{(2)} - {\langle v'^{(2)}, z^{(1)}\rangle}z^{(1)}}$ and $\beta = \norm{v^{(2)} - {\langle v^{(2)}, w^{(1)}\rangle}w^{(1)}}$. 

Geometrically, $\alpha$ (respectively $\beta$) represents the orthogonal projection of $v^{(2)}$ (respectively $v'^{(2)}$) onto the span of $v^{(1)}$ (respectively $v'^{(1)}$). Hence, by the SP condition, it follows that $\alpha\beta$ is non-infinitesimal. 

By repeated uses of triangle inequality and Cauchy-Schwarz inequality, we have:
\begin{align}
    &\abs{\alpha - \beta} \nonumber \\
    \leq &\norm{v^{(2)} - v'^{(2)}} +  \norm{\langle v^{(2)}, w^{(1)}\rangle w^{(1)} - \langle v'^{(2)}, z^{(1)}\rangle z^{(1)}} \nonumber \\
    = &\norm{v^{(2)} - v'^{(2)}} +  \norm{\langle v^{(2)}, w^{(1)} - z^{(1)}\rangle w^{(1)} + \inp{v^{(2)}}{z^{(1)}} w^{(1)} -  \langle v'^{(2)}, z^{(1)}\rangle z^{(1)}} \nonumber \\
    \leq &\norm{v^{(2)} - v'^{(2)}} + \abs{\langle v^{(2)}, w^{(1)} - z^{(1)}\rangle} \norm{w^{(1)}} \nonumber \\ 
    &+ \norm{\inp{v^{(2)}}{z^{(1)}} w^{(1)} -  \langle v'^{(2)}, z^{(1)}\rangle z^{(1)}} \nonumber \\
    \leq &\norm{v^{(2)} - v'^{(2)}} + \abs{\langle v^{(2)}, w^{(1)} - z^{(1)}\rangle} \norm{w^{(1)}} \nonumber \\
     ~ \hspace{0.4cm}&+ \norm{\inp{v^{(2)}}{z^{(1)}} \left(w^{(1)} - z^{(1)} \right) + \inp{v^{(2)} - v'^{(2)}}{z^{(1)}} z^{(1)}} \nonumber \\
\leq &\norm{v^{(2)} - v'^{(2)}} + \norm{v^{(2)}} \norm{w^{(1)} - z^{(1)}} \norm{w^{(1)}} + \norm{v^{(2)}}\norm{z^{(1)}} \norm{w^{(1)} - z^{(1)}} \nonumber \\
&+ \norm{v^{(2)} - v'^{(2)}}\norm{z^{(1)}} \norm{z^{(1)}} \nonumber,
\end{align}
which is infinitesimal by the hypothesis. Hence, we have 
\begin{align}
\abs{\alpha - \beta} \approx 0. \label{alpha - beta}
\end{align}

Using triangle inequality and Cauchy-Schwarz inequality a few times, we have:
\begin{align}
    \abs{\langle{v^{(2)}, w^{(1)}}\rangle - \langle{v'^{(2)}, z^{(1)}}\rangle} \nonumber
    &= \abs{\inp{v^{(2)} - v'^{(2)}}{w^{(1)}} + \inp{v'^{(2)}}{w^{(1)} - z^{(1)}}} \\
    &\leq \abs{\inp{v^{(2)} - v'^{(2)}}{w^{(1)}}} + \abs{\inp{v'^{(2)}}{w^{(1)} - z^{(1)}}} \nonumber \\
    &\leq \norm{v^{(2)} - v'^{(2)}}\norm{w^{(1)}} + \norm{v'^{(2)}} \norm{w^{(1)} - z^{(1)}}
     \label{super long expression}
\end{align}

The right side of \eqref{super long expression} is an infinitesimal by the hypothesis and \eqref{base of induction}. Since $\langle{v^{(2)}, w^{(1)}}\rangle, \langle{v'^{(2)}, z^{(1)}}\rangle$ are in ${^*}\mathbb{R}_{\text{fin}}$ (one can see this using Cauchy-Schwarz inequality), we thus get:
\begin{align}
    \langle{v^{(2)}, w^{(1)}}\rangle \approx \langle{v'^{(2)}, z^{(1)}}\rangle \label{approximation in super long expression}
\end{align}

Note that ${\alpha}, {\beta} \in {^*}\mathbb{R}_{\text{fin}}$ (one can see this by applying the triangle inequality and Cauchy-Schwarz inequality to the expressions for $\alpha$ and $\beta$). Using \eqref{alpha - beta} and \eqref{approximation in super long expression} in \eqref{second step} (and using the fact that $\st\colon {^*}\mathbb{R}_{\text{fin}} \rightarrow \mathbb{R}$ is a ring homomorphism), we get
\begin{align}\label{second step completed}
    \norm{w^{(2)} - z^{(2)}} \approx 0.
\end{align}

The proof of the case $i =2$ from the case $i = 1$ clearly generalizes to show, by induction, that $\norm{w^{(i)} - z^{(i)}} \approx 0$ for all $i \in \{1, \ldots, \gamma\}$.
\end{proof}

\begin{remark}
Theorem \ref{Rotation Claim duplicate} shows that if two internal subspaces have bases of finite vectors satisfying SP such that they can be matched in pairs of infinitesimal distances, then the same is true for the orthonormalizations of these bases as well. This allows one to ``rotate'' one subspace to another through an orthogonal transformation of infinitesimal norm, as done in Section 3.
\end{remark}

In all applications in this paper, the inner product space $V$ is taken to be ${^*}\mathbb{R}^N$ for some $N \in {^*}\mathbb{N}$ (usually taken to be hyperfinite). The vectors are usually hyperfinite truncations of an orthonormal collection of elements of $\ell^2(\mathbb{R})$. We next show that Theorem \ref{Rotation Claim duplicate} is applicable in that setting. 

\begin{proposition}\label{hyperfinite truncations are SP}
Let $\{u^{(1)}, \ldots, u^{(\gamma)}\}$ be a finite collection of orthonormal vectors in $\ell^2(\mathbb{R})$. 
\begin{enumerate}
    \item\label{hyperfinite Sp1} For any $N > \mathbb{N}$, the collection $\{(u^{(1)})_{(N)}, \ldots, (u^{(\gamma)})_{(N)}\}$ satisfies the Separation Property. 
    \item\label{hyperfinite SP2} For any $N > M > \mathbb{N}$, the collection of vectors $\{(u^{(1)})_{(M)}, \ldots, (u^{(\gamma)})_{(M)}\}$ (canonically viewed as vectors in ${^*}\mathbb{R}^N$) and $\{(u^{(1)})_{(N)}, \ldots, (u^{(\gamma)})_{(N)}\}$ satisfy the conditions in Theorem \ref{Rotation Claim duplicate}. 
\end{enumerate}
\end{proposition}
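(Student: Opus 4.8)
The plan is to reduce both parts to one elementary observation: the Gram matrix of the truncated vectors is infinitesimally close to the $\gamma \times \gamma$ identity. Since each $u^{(i)}$ lies in $\ell^2(\mathbb{R})$ with $\norm{u^{(i)}} = 1$, the real sequences $n \mapsto \sum_{l \le n}(u^{(i)}_l)^2$ and $n \mapsto \sum_{l \le n} u^{(i)}_l u^{(j)}_l$ converge to $1$ and to $\langle u^{(i)}, u^{(j)}\rangle_{\ell^2} = \delta_{ij}$; by the nonstandard characterization of limits (exactly as used for \eqref{orthogonal M}), for every $K > \mathbb{N}$ one gets $\norm{(u^{(i)})_{(K)}} \approx 1$ and $\langle (u^{(i)})_{(K)}, (u^{(j)})_{(K)}\rangle \approx \delta_{ij}$. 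Thus the internal Gram matrix $G_K$ of $(u^{(1)})_{(K)}, \ldots, (u^{(\gamma)})_{(K)}$ is $I_\gamma + E_K$ with every entry of $E_K$ infinitesimal, and since $\gamma$ is a fixed standard integer this forces $\norm{E_K}_{\mathrm{op}} \approx 0$; in particular the Gram matrix $G$ of any subfamily likewise equals identity plus a matrix of infinitesimal entries, so $\mathbf{c}^{T} G \mathbf{c} \ge (1 - \epsilon)\norm{\mathbf{c}}^2$ for some $\epsilon \approx 0$ and all internal $\mathbf{c}$.

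For part \ref{hyperfinite Sp1}, fix $N > \mathbb{N}$, an index $i$, and a subcollection indexed by $J \subseteq \{1, \ldots, \gamma\} \setminus \{i\}$. For arbitrary internal $\mathbf{c} = (c_j)_{j \in J}$, expanding $\norm{(u^{(i)})_{(N)} - \sum_{j \in J} c_j (u^{(j)})_{(N)}}^2$ yields $\norm{(u^{(i)})_{(N)}}^2 - 2\sum_{j} b_j c_j + \mathbf{c}^{T} G \mathbf{c}$, where $b_j = \langle (u^{(i)})_{(N)}, (u^{(j)})_{(N)}\rangle \approx 0$ and $G$ is the Gram matrix of $\{(u^{(j)})_{(N)}\}_{j \in J}$. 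By the previous paragraph and Cauchy--Schwarz (there are only finitely many coordinates, each infinitesimal), this is bounded below by $(1 - \epsilon') - 2\delta \norm{\mathbf{c}} + (1 - \epsilon)\norm{\mathbf{c}}^2$ for suitable $\delta, \epsilon, \epsilon' \approx 0$ with $\delta \ge 0$, whose minimum over $\norm{\mathbf{c}} \ge 0$ is $\approx 1$. Since the orthogonal projection realizes the infimum of this square, $\norm{(u^{(i)})_{(N)} - P_{\text{span}\{(u^{(j)})_{(N)} : j \in J\}}((u^{(i)})_{(N)})} \not\approx 0$; as there are only finitely many pairs $(i,J)$, this is exactly \eqref{defining separation property}.

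Part \ref{hyperfinite SP2} is then a matter of checking the three hypotheses of Theorem \ref{Rotation Claim duplicate} for $\{(u^{(i)})_{(M)}\}$ and $\{(u^{(i)})_{(N)}\}$ as vectors in ${^*}\mathbb{R}^N$. Hypothesis \ref{rotation 1} is part \ref{hyperfinite Sp1} applied with $K = M$ and $K = N$; SP for the $(u^{(i)})_{(M)}$ is unaffected by the canonical embedding into ${^*}\mathbb{R}^N$, since all vectors involved and their spans lie in the coordinate subspace ${^*}\mathbb{R}^M \times \{0\}$, on which orthogonal projections agree with those computed in ${^*}\mathbb{R}^M$. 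Hypothesis \ref{rotation 2} holds because $\norm{(u^{(i)})_{(M)}} \approx 1 \approx \norm{(u^{(i)})_{(N)}}$. For \ref{rotation 3}, $(u^{(i)})_{(N)} - (u^{(i)})_{(M)}$ is supported on coordinates $M+1, \ldots, N$, so $\norm{(u^{(i)})_{(N)} - (u^{(i)})_{(M)}}^2 = \sum_{l = M+1}^{N}(u^{(i)}_l)^2$ is the difference of the partial sums at $N$ and at $M$, each $\approx 1$, hence $\approx 0$.

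The only genuinely delicate step is the lower bound in part \ref{hyperfinite Sp1}: one must promote the entrywise estimate $\langle (u^{(j)})_{(N)}, (u^{(l)})_{(N)}\rangle \approx \delta_{jl}$ to a bound on the quadratic form $\mathbf{c} \mapsto \mathbf{c}^{T} G \mathbf{c}$ valid for all internal $\mathbf{c}$ (including those of unlimited norm), and this is precisely where the standard-finiteness of $\gamma$ is used, to pass from infinitesimal entries of $E_N$ to $\norm{E_N}_{\mathrm{op}} \approx 0$. Everything else is bookkeeping with $\ell^2$ tails via the nonstandard characterization of convergence.
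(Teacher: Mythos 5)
Your proof is correct, but it takes a genuinely different route from the paper's. The paper proves part \eqref{hyperfinite Sp1} by running Gram--Schmidt on the full collection $\{(u^{(1)})_{(N)}, \ldots, (u^{(\gamma)})_{(N)}\}$ to produce an orthonormal set $\{z^{(1)}, \ldots, z^{(\gamma)}\}$, then establishing a claim (Claim B.6 in the paper) that $\inp*{(u^{(i)})_{(N)}}{z^{(j)}} \approx 0$ for $j \neq i$, and finally lower-bounding the distance to the projection by repeated use of the triangle inequality. Your argument instead works directly with the Gram matrix $G_K = I_\gamma + E_K$, promoting the entrywise estimate $\inp*{(u^{(i)})_{(K)}}{(u^{(j)})_{(K)}} \approx \delta_{ij}$ to an operator-norm bound $\norm{E_K}_{\mathrm{op}} \approx 0$ (valid because $\gamma$ is a \emph{standard} finite integer), and then minimizing the resulting quadratic $\norm{(u^{(i)})_{(N)} - \sum_{j\in J} c_j (u^{(j)})_{(N)}}^2 \geq (1-\epsilon') - 2\delta\norm{\mathbf{c}} + (1-\epsilon)\norm{\mathbf{c}}^2$ over all internal $\mathbf{c}$. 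The quadratic-form route is arguably tidier: it handles arbitrary subcollections $J \subseteq \{1,\ldots,\gamma\}\setminus\{i\}$ uniformly and avoids any dependence on the specific basis produced by Gram--Schmidt (which the paper's phrasing makes a little delicate, since for a general subcollection $\mathcal{V}'$ the span of $\mathcal{V}'$ need not coincide with the span of the corresponding $z^{(j_\theta)}$'s; the paper implicitly relies on the equivalence \eqref{easier separation property}$\iff$\eqref{defining separation property} to sidestep this). Your treatment of part \eqref{hyperfinite SP2} --- checking the three hypotheses of Theorem \ref{Rotation Claim duplicate} via the $\ell^2$-tail estimate $\sum_{l=M+1}^{N}(u^{(i)}_l)^2 \approx 0$ and the observation that SP is insensitive to the canonical embedding of ${^*}\mathbb{R}^M$ into ${^*}\mathbb{R}^N$ --- matches the paper's in substance. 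One small point worth making explicit: the step where you pass from the lower bound being a function of $t = \norm{\mathbf{c}}$ to "minimum over $\norm{\mathbf{c}} \ge 0$" is sound because the convex quadratic in $t$ has non-infinitesimal leading coefficient $1-\epsilon$, so its minimum value $(1-\epsilon') - \delta^2/(1-\epsilon) \approx 1$; you say this, but it is the crux of why infinitesimal perturbations of the Gram matrix cannot destroy separation, and deserves the emphasis you give it in your final paragraph.
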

\begin{proof}
Let $\{u^{(1)}, \ldots, u^{(\gamma)}\}$ be as in the statement of the proposition. Let $N > \mathbb{N}$. By Lemma \ref{independence}, $\{(u^{(1)})_{(N)}, \ldots, (u^{(\gamma)})_{(N)}\}$ is linearly independent. Therefore, we can apply the Gram-Schmidt orthonormalization to obtain the corresponding orthonormal set $\{z^{(1)}, \ldots, z^{(\gamma)}\}$. 

Take $i \in \{1, \ldots, \gamma\}$, and let $\mathcal{V}' \defeq \{(u^{(j_1)})_{(N)}, \ldots, (u^{(j_t)})_{(N)}\}$ be a subcollection not containing $(u^{(i)})_{(N)}$. Then we have:
\begin{align*}
    \norm{(u^{(i)})_{(N)} - P_{\text{span}(\mathcal{V}')}((u^{(i)})_{(N)})} &= \norm{(u^{(i)})_{(N)} - \sum_{\theta = 1}^t \inp{(u^{(i)})_{(N)}}{z^{(j_{\theta})}}z^{(j_{\theta})}} \\
    &\geq \norm{(u^{(i)})_{(N)}} - \norm{\sum_{\theta = 1}^t {\inp{(u^{(i)})_{(N)}}{z^{(j_{\theta})}}} {z^{(j_{\theta})}}} \\
    &\geq \norm{(u^{(i)})_{(N)}} - \sum_{\theta = 1}^t \abs{\inp{(u^{(i)})_{(N)}}{z^{(j_{\theta})}}}\norm{z^{(j_{\theta})}} \\
    &= \norm{(u^{(i)})_{(N)}} - \sum_{\theta = 1}^t \abs{\inp{(u^{(i)})_{(N)}}{z^{(j_{\theta})}}}
\end{align*}

The second and third lines follow by triangle inequality, and the fourth line follows from the fact that $\norm{z^{(j)}} = 1$ for all $j \in \{1, \ldots, \gamma\}$. Thus, to prove \ref{hyperfinite Sp1}, it suffices to show the following claim:

\begin{claim}\label{appendix claim}
We have $\inp {(u^{(i)})_{(N)}} {z^{(j)}} \approx 0$ for all $j \in \{1, \ldots, \gamma\} \backslash \{i\}$.
\end{claim}

This is a straightforward consequence of the precise formulae for $z^{(i)}$ as per the Gram-Schmidt orthonormalization procedure (see below):
\begin{align} \label{Gram-Schmidt appendix}
           z^{(1)} &\defeq \frac{(u^{(1)})_{(N)}}{\norm{(u^{(1)})_{(N)}}}, \\   
         \nonumber  z^{(2)} &\defeq \frac{(u^{(2)})_{(N)} - {\langle (u^{(2)})_{(N)}, z^{(1)}\rangle}z^{(1)}}{\norm{(u^{(2)})_{(N)} - {\langle (u^{(2)})_{(N)}, z^{(1)}\rangle}z^{(1)}}}, \\
         \nonumber z^{(3)} &\defeq \frac{(u^{(3)})_{(N)} - {\langle (u^{(3)})_{(N)}, z^{(1)}\rangle}z^{(1)} - {\langle (u^{(3)})_{(N)}, z^{(2)}\rangle}z^{(2)}}{\norm{(u^{(3)})_{(N)} - {\langle (u^{(3)})_{(N)}, z^{(1)}\rangle}z^{(1)} - {\langle (u^{(3)})_{(N)}, z^{(2)}\rangle}z^{(2)}}} ,\\
         \nonumber  &\vdots \\
         \nonumber  z^{(\gamma)} &\defeq \frac{(u^{(\gamma)})_{(N)} - {\langle (u^{(\gamma)})_{(N)}, z^{(1)}\rangle}z^{(1)} - \ldots - {\langle (u^{(\gamma)})_{(N)}, z^{(\gamma - 1)}\rangle}z^{(\gamma - 1)}}{\norm{(u^{(\gamma)})_{(N)} - {\langle (u^{(\gamma)})_{(N)}, z^{(1)}\rangle}z^{(1)} - \ldots - {\langle (u^{(\gamma)})_{(N)}, z^{(\gamma - 1)}\rangle}z^{(\gamma - 1)}}}.
\end{align}

Indeed, the fact that $\inp{u^{(i)}}{u^{(j)}}_{\ell^2(\mathbb{R})} = \lim_{n \rightarrow \infty} \inp{(u^{(i)})_{(n)}}{(u^{(j)})_{(n)}} = \delta_{ij}$ implies (by the nonstandard characterization of limits) that $\inp{(u^{(i)})_{(N)}}{(u^{(j)})_{(N)}} \approx 0$ for $i \neq j$, which proves the claim. This completes the proof of \eqref{hyperfinite Sp1}. 

Now, let $N > M > \mathbb{N}$. By \eqref{hyperfinite Sp1}, both $\{(u^{(1)})_{(M)}, \ldots, (u^{(\gamma)})_{(M)}\}$ (viewed canonically as vectors in ${^*}\mathbb{R}^N$ and $\{(u^{(1)})_{(N)}, \ldots, (u^{(\gamma)})_{(N)}\}$ satisfy SP. Also, by a similar argument as in the proof of Claim \ref{appendix claim}, we have
\begin{align*}
    \norm{(u^{(i)})_{(N)}} \approx \norm{(u^{(i)})_{(M)}} &\approx 1 \text{ for all } i \in \{1, \ldots, \gamma\}, \\
    \text{and } \norm{(u^{(i)})_{(N)}- (u^{(i)})_{(M)}} 
    &\approx 0 \text{ for all } i \in \{1, \ldots, \gamma\}.
\end{align*}
This completes the proof of \eqref{hyperfinite SP2}.
\end{proof}
\end{appendix}

\par\bigskip\noindent
{\bf Acknowledgment.} The author thanks Ambar Sengupta for introducing this problem and Karl Mahlburg for helpful comments on the paper. The author thanks David Ross for discussions on nonstandard linear algebra. The author is grateful to the online resources of the TikZ community, more specifically to a reccent paper by Peterson and Sengupta \cite{Sengupta-Peterson} for source codes of some figures. The author thanks Kristopher Hollingsworth for help with editing the figures. 

\bibliography{References}
\bibliographystyle{amsplain}
\end{document}